\documentclass[11pt, twoside, leqno]{amsart}  

\usepackage{a4wide}
%%%%%%%%%%%%%%%%%%%%%%%%%%%%%
%    Packages
%%%%%%%%%%%%%%%%%%%%%%%%%%%%%

\usepackage{lipsum}
\usepackage{amsfonts}
\usepackage{graphicx}
\usepackage{epstopdf}
\usepackage{fancyhdr}
\usepackage[normalem]{ulem}
\usepackage{cancel}
\usepackage{calligra}
\usepackage{amsfonts,amsmath,amsthm,amssymb}
\usepackage{mathtools}
\usepackage{hyperref}
\usepackage{autonum}
\usepackage{hhline}
\usepackage{array}
\usepackage{bibentry}
\usepackage{pifont}% http://ctan.org/pkg/pifont
\newcommand{\cmark}{\ding{51}}%
\usepackage{diagbox}
\usepackage[table,x11names]{xcolor}
\usepackage{tcolorbox}
\usepackage{mdframed}
\usepackage{multicol}
\usepackage{graphicx}
\usepackage{subcaption}
\usepackage{moreverb}
\usepackage{bbm}

\usepackage{todonotes}
\usepackage{scalerel,amssymb}
\allowdisplaybreaks
\usepackage{mathrsfs}  
\usepackage{lineno}
\usepackage{todonotes}
\usepackage{tikz}
\usepackage{pgfplots}
\usetikzlibrary{arrows.meta}
\usepackage[numbers,sort&compress]{natbib}
\definecolor{mygreen}{HTML}{43a047}
\usepackage{doi}
\usepackage{adjustbox}
\usepackage{alphalph}
\usepackage{booktabs}
\usepackage{makecell}
\usepackage{appendix}
\usepackage{accents}
\usepackage[normalem]{ulem}

%%%%%%%%%%%%%%%%%%%%%%%%%%%%%
%    Letters, Symbols
%%%%%%%%%%%%%%%%%%%%%%%%%%%%%

\def\eps{\varepsilon}
\def\beps{\bar{\varepsilon}}

\def\calM{\mathcal{M}}
\def\fB{F^\textbf{B}}

\def\ulal{\underline{\mathfrak{m}}}
\def\olal{\overline{\mathfrak{m}}}

\def\opsi{\overline{\psi}}
\def\ophi{\overline{\phi}}

\newcommand{\Om}{\Omega}
\newcommand{\D}{\Delta}

\newcommand{\Dt}{\textup{D}_t}

\def\aaa{\mathfrak{m}}

\def\bbb{\mathfrak{n}}
\def\lll{\mathfrak{l}}

\def\tk{k}

\definecolor{darkgreen}{rgb}{0,0.5,0}

%%%%%%%%%%%%%%%%%%%%%%%%%%%%%
%    Time-differentiation
%%%%%%%%%%%%%%%%%%%%%%%%%%%%%
\newcommand{\ueps}{u^\varepsilon}
\newcommand{\uteps}{u_t^\varepsilon}

\newcommand{\cAone}{C_{\textbf{A}_1}}
\newcommand{\cAtwo}{C_{\textbf{A}_2}}
\newcommand{\cAthree}{C_{\textbf{A}_3}}

\newcommand{\pt}{p_t}

\newcommand{\ptt}{p_{tt}}

\newcommand{\pepstt}{\psi_{tt}^\eps}

\newcommand{\pepst}{\psi_{t}^\eps}
\newcommand{\peps}{\psi^\eps}
\newcommand{\ddt}{\frac{\textup{d}}{\textup{d}t}}

\newcommand{\Dal}{{\textup{D}}_t^\alpha}
\newcommand{\Doal}{{\textup{D}}_t^{1-\alpha}}

%%%%%%%%%%%%%%%%%%%%%%%%%%%%%
%    Intgration-related
%%%%%%%%%%%%%%%%%%%%%%%%%%%%%
\newcommand{\dt}{\, \textup{d} t}
\newcommand{\ds}{\, \textup{d} s }

\newcommand{\dxs}{\, \textup{d}x\textup{d}s}

\newcommand{\intO}{\int_{\Omega}}
%%%%%%%%%%%%%%%%%%%%%%%%%%%%%
%    Norms
%%%%%%%%%%%%%%%%%%%%%%%%%%%%%
\newcommand{\nLtwo}[1]{\|#1\|_{L^2(\Omega)}}

%%%%%%%%%%%%%%%%%%%%%%%%%%%%%
%    Products
%%%%%%%%%%%%%%%%%%%%%%%%%%%%%

%%%%%%%%%%%%%%%%%%%%%%%%%%%%%
%    Sets
%%%%%%%%%%%%%%%%%%%%%%%%%%%%%
\newcommand{\R}{\mathbb{R}} 
 
%%%%%%%%%%%%%%%%%%%%%%%%%%%%%
%    Spaces
%%%%%%%%%%%%%%%%%%%%%%%%%%%%%
\newcommand{\Ltwo}{L^2(\Omega)}

\newcommand{\Linf}{L^\infty(\Omega)}
\newcommand{\Hone}{H^1(\Omega)}

\newcommand{\Hthree}{H^3(\Omega)}
\newcommand{\Honezero}{H_0^1(\Omega)}
\newcommand{\Honetwo}{{H_\diamondsuit^2(\Omega)}}
\newcommand{\Honethree}{{H_\diamondsuit^3(\Omega)}}
\newcommand{\Honefour}{{H_\diamondsuit^4(\Omega)}}

%%%%%%%%%%%%%%%%%%%%%%%%%%%%%
%   % Embedding constants
%%%%%%%%%%%%%%%%%%%%%%%%%%%%%

%%%%%%%%%%%%%%%%%%%%%%%%%%%%%
%    Theorem enviroments
%%%%%%%%%%%%%%%%%%%%%%%%%%%%%
\newtheorem{theorem}{Theorem}

\newtheorem{proposition}{Proposition}
\newtheorem{corollary}[theorem]{Corollary}
\newtheorem*{assumption*}{Assumptions}

\numberwithin{lemma}{section}
\numberwithin{proposition}{section}
\numberwithin{theorem}{section}
\numberwithin{equation}{section}
\makeatletter
\newcommand{\leqnomode}{\tagsleft@true}
\newcommand{\reqnomode}{\tagsleft@false}
\makeatother
%%%%%%%%%%%%%%%%%%%%%%%%%%%%%
%    Letters
%%%%%%%%%%%%%%%%%%%%%%%%%%%%%
\newcommand{\bfq}{\boldsymbol{q}}

\newcommand{\calB}{\mathcal{B}}
\newcommand{\frakKeps}{\mathfrak{K}_\varepsilon}
\newcommand{\frakK}{\mathfrak{K}}

\newcommand\Lconv{\ast}

%%%%%%%%%%%%%%%%%%%%%%%%%%%%%
%    Miscellaneous
%%%%%%%%%%%%%%%%%%%%%%%%%%%%%

\newcommand{\TK}{\mathcal{T}}

\newcommand{\rt}{\tau_\theta}

\definecolor{grey}{rgb}{0.5,0.5,0.5}

\newcommand{\sometimes}{
	\begin{tikzpicture}[scale=0.5, baseline=-1.2mm, thick]
		\draw[line width=1ex, line cap=round, gray]  (0,0) -- (4mm,0);
	\end{tikzpicture}
}

%%%%%%%%%%%%%%%%%%%%%%%%%%%%%
%%%%%%%%%%%%%%%%%%%%%%%%%%%%%
%    Comments
%%%%%%%%%%%%%%%%%%%%%%%%%%%%%

%%%%%%%%%%%%%%%%%%%%%%%%%%%%%
%    Commands
%%%%%%%%%%%%%%%%%%%%%%%%%%%%%
       % Top strut table
 % Bottom strut table
%%%%%%%%%%%%%%%%%%%%%%%%%%%%%

\title[Kuznetsov and Blackstock equations with nonlocal dissipation]{The Kuznetsov and Blackstock equations of nonlinear acoustics with nonlocal-in-time dissipation}
\subjclass[2010]{35L05, 35L72, 35R11}

\keywords{quasilinear wave equations, fractional dissipation, Kuznetsov equation, Blackstock equation, local well-posedness, singular limits}

\author[B. Kaltenbacher, M. Meliani, and V. Nikoli\'{c}]{\small Barbara Kaltenbacher, Mostafa Meliani, and Vanja Nikoli\'{c}}
\address{  \small
	Department of Mathematics, 
	Alpen-Adria-Universit\"at Klagenfurt 
	\\ Universit\"atsstra\ss e 65--67, A-9020 Klagenfurt, Austria}
\email{barbara.kaltenbacher@aau.at}
\address{ 
	Department of Mathematics \\ 
	Radboud University   \\ 
	Heyendaalseweg 135,
	6525 AJ Nijmegen, The Netherlands}
\email{mostafa.meliani@ru.nl} 
\email{vanja.nikolic@ru.nl}

%\thanks{$^*$Corresponding author: ..., \href{mailto:...}{...}}
\nobibliography*
\begin{document}
\vspace*{4mm}
\begin{abstract}
	In ultrasonics, nonlocal quasilinear wave equations arise when taking into account a class of heat flux laws of Gurtin--Pipkin type within the system of governing equations of sound motion. 
 The present study extends previous work by the authors to incorporate nonlocal acoustic wave equations with quadratic gradient nonlinearities which require a new approach in the energy analysis.
 More precisely, we investigate the Kuznetsov and Blackstock equations with dissipation of fractional type and identify a minimal set of assumptions on the memory kernel needed for each equation. In particular, we discuss the physically relevant examples of Abel and Mittag-Leffler kernels. We perform the well-posedness analysis uniformly with respect to a small parameter on which the kernels depend and which can be interpreted as the sound diffusivity or the thermal relaxation time. We then analyze the limiting behavior of solutions with respect to this parameter, and how it is influenced by the specific class of memory kernels at hand. Through such a limiting study, we relate the considered nonlocal quasilinear equations to their limiting counterparts and establish the convergence rates of the respective solutions in the energy norm. 
\end{abstract}
\vspace*{-8mm}
\maketitle           
%%%%%%%%%%%%%%%%%    
\section{Introduction} \label{Sec:Introduction}
The Kuznetsov~\cite{kuznetsov1971equations} and Blackstock~\cite{blackstock1963approximate} equations are classical models of nonlinear acoustics derived from the governing equations of fluid dynamics. In~\cite{kaltenbacher2023limting}, nonlocal-in-time analogues of these equations were derived by assuming a class of general heat exchange laws in the acoustic medium, so-called Gutin--Pipkin heat flux laws~\cite{gurtin1968general}.
\\\indent
In this work, we extend the analysis in \cite{kaltenbacher2023limting} in which a fractionally damped Westervelt equation (a cousin of Blackstock's and Kuznetsov's equations) in pressure form was studied. Here, we intend to cover the more involved case of quadratic gradient nonlinearities and as a special case retrieve a nonlocal Westervelt equation in potential form. We analyze, in a smooth bounded domains $\Om$ with homogeneous Dirichlet boundary conditions, the acoustic wave equation 
\begin{align} \label{abstract_wave_eq_Kuznetsov}
(1+2\tk\peps_t)\peps_{tt}-c^2 \Delta \peps - \frakKeps* \D\peps_{t} + 2\ell\; \nabla \peps\cdot \nabla \peps_t=0,
\end{align}
which we refer to as of Kuznetsov type. By setting $\ell=0$ and adjusting the medium-dependent constant $\tk$, we recover a nonlocal Westervelt equation in potential form.
\\ \indent We are also interested in the analysis of the Blacktock-type equation:
\begin{align} \label{abstract_wave_eq_Blackstock}
\peps_{tt}-c^2 (1-2\tk\peps_t)\Delta \peps - \frakKeps* \D\peps_{t} + 2\ell\; \nabla \peps\cdot \nabla \peps_t=0.
\end{align}
Above, $c>0$ is the speed of sound and $\tk$, a  nonlinearity parameter. Typically, $\ell\in\{0,1\}$ depending on the nonlinear acoustic model used, however, for the analysis, we can view it as a real number. The prime examples of the involved memory kernel $\frakKeps$ will be the Abel kernel: 
	\begin{equation} 
	\delta \rt^{-\alpha}\frac{1}{\Gamma(\alpha)} t^{\alpha-1}, \quad \alpha \in (0,1) 
	\end{equation}
and the Mittag-Leffler kernel
\begin{equation} 
	\begin{aligned}
		\delta \left(\frac{\rt}{\tau}\right)^{a-b}\frac{1}{\tau^b}t^{b-1}E_{a,b}\left(-\left(\frac{t}{\tau}\right)^a\right),
	\end{aligned}
	\end{equation}
with $0<a, b\leq1$. The physical parameters $\delta$ and $\tau$ are the sound diffusivity and thermal relaxtion time, respectively, and are typically small. Thus, $\eps$ on which the memory kernel $\frakKeps$ depends, and that we aim to send to zero, will be  $\eps=\delta$ in the case of the Abel kernel, whereas for the Mittag-Leffler kernels, we will have the two cases $\eps=\delta$ or $\eps = \tau$ (while
the respective other small parameter is positive fixed).
 \\
%\textcolor{green}{$\delta>0$}\barbara{I suggest to remove $\delta>0$ here in order to avoid confusion with the $\delta$ appearing in front of the Mittag-Leffler kernel.
%Alternatively, we could stay with the $\delta$ and $\tau$ notations I have made some suggestions for modifications at the beginning of Section 3.2.1 accordingly.} \\
\indent
The difference between \eqref{abstract_wave_eq_Kuznetsov} and \eqref{abstract_wave_eq_Blackstock}, which resides in the position of the quasilinearity, leads to distinct assumptions and analyses when it comes to the uniform-in-$\eps$ well-posedness, therefore, we will study them separately in Sections~\ref{Sec:Kuznetsov_prop} and~\ref{Sec:Blackstock_prop}. 
\subsection*{State of the art} To the best of our knowledge, together with \cite{kaltenbacher2023limting}, this is the first body of rigorous work dealing with the analysis and singular limits of quasilinear wave equations with damping of nonlocal/time-fractional type.
The analysis of these equations is challenging because of the combination of the nonlinear evolution and the nonrestrictive assumptions imposed on the memory kernel. 
To show well-posedness of Kuznetsov's and Blackstock's equations, one must ensure that the quasilinear coefficient $1 + 2\tk \pepst$ (respectively $1 - 2\tk \pepst$) does not degenerate, uniformly in $\eps$. Note that the local well-posedness of the strongly damped Blackstock equation can be performed without the need  of a nondegeneracy condition by leaning on the parabolic structure of the equation; see e.g., \cite{nikolic2022time,fritz2018well}. However, since our goal in the analysis is not to rely on the strong damping so as to be able to generalize the results to weaker types of damping, i.e., fractional and to be able to take limits as the sound diffusivity parameter vanishes, we will require the coefficient $1 - 2\tk \pepst$ not degenerate.
\\
\indent We note that the well-posedness analysis of the nonlocal Westervelt equation in pressure form 
\begin{equation} \label{West_general}
	((1+2\tilde k\ueps)\uteps)_t-c^2 \Delta \ueps - \Delta \frakKeps * \uteps = 0
\end{equation} 
with fractional kernels can be found in~\cite{kaltenbacher2022inverse,baker2022numerical} and in a more general framework in \cite{kaltenbacher2023limting}. The unknown $\ueps$ is the acoustic pressure and $\tilde k$ is a medium dependent nonlinearity parameter related to $\tk$. Compared to the analysis in~\cite{kaltenbacher2023limting,kaltenbacher2022inverse, baker2022numerical}, the presence of a time-derivative ($\pepst$ instead of $\ueps$) in the nonlinear coefficient in \eqref{abstract_wave_eq_Kuznetsov} and \eqref{abstract_wave_eq_Blackstock} puts an additional strain on the analysis, as it requires obtaining $\eps$-uniform bounds on $\|\pepst\|_{L^\infty(\Linf)}$ and guaranteeing their smallness. We also need, in the case of the Kuznetsov and Blackstock settings, to extract enough regularity to absorb the quadratic gradient nonlinearity (that is, $2\ell \nabla \peps \cdot \nabla\pepst$). To achieve this goal, we will need to rely on an additional assumption on the kernel compared to~\cite{kaltenbacher2023limting} (see Assumption~\eqref{assumption2}). The well-posedness analysis will rely on considering a time-differentiated linearized PDE, and using a Banach's fixed-point argument.
\\
\indent 
The local-in-time counterparts of the acoustic models considered in the present work are by now well-studied; we refer to, e.g.,~\cite{fritz2018well, tani2017mathematical, kaltenbacher2022parabolic, mizohata1993global,dekkers2017cauchy,Wilke} and the references contained therein. %We also point out the singular limit analysis for a class of nonlocal differential equations in~\cite{conti2006singular, conti2005singular}.
\subsection*{Main results} 
Our contributions  are twofold. First, we establish the well-posedness of a family of nonlocal Blackstock and Kuznetsov equations supplemented with initial and homogeneous Dirichlet boundary conditions. 
Sufficient smoothness of initial data will be required, namely
\[
\psi_0, \psi_1 \in \{v \in H^4(\Om) \cap \Honezero: \Delta v \vert_{\partial \Omega}=0 \}.
\]
Furthermore, smallness of data and short final time will be needed to ensure $\eps$-uniform well-posedness of the nonlinear problem. However, smallness of the initial conditions will be imposed in a lower-order norm than that of their regularity space by relying on an Agmon's interpolation inequality~\cite[Lemma 13.2]{agmon2010lectures}. Theorems~\ref{Thm:Wellp_2ndorder_nonlocal_Kuzn} and \ref{Thm:Wellp_2ndorder_nonlocal_Blackstock} establish well-posedness of, among others, the fractionally damped Kuznetsov and Blackstock equations. 

Second, we identify the assumptions on the memory kernel under which one can take the limit $\eps\searrow0$ 
in \eqref{abstract_wave_eq_Kuznetsov} or \eqref{abstract_wave_eq_Blackstock}. Indeed, provided $\frakKeps$ verifies certain nonrestrictive assumptions and converges in the $\|\cdot \Lconv \ 1\|_{L^1(0,T)}$ norm to some measure $\frakK_0$ as $\eps\searrow0$, we show that the solutions converge in the standard energy norm 
\begin{equation}\label{energy_norm}
	\|\psi\|_{\textup{E}} := \left(\|\psi_t\|^2_{L^\infty(\Ltwo)}+ \|\psi\|^2_{L^\infty(\Hone)}\right)^{1/2}
\end{equation}
to the solution of the models with $\frakK_0$ as memory kernel. To establish the rate of convergence, we need to consider specific examples. We will focus then on the aforementioned Abel and Mittag-Leffler kernels. The vanishing limits are detailed in Section~\ref{Sec:limits_Kuzn} and \ref{Sec:nonlin_wellp_cont_Black}.
These results significantly generalize~\cite[Theorem 7.1]{kaltenbacher2022parabolic}, where the vanishing sound diffusivity limit of the strongly damped Kuznetsov equation (obtained here by setting $\frakKeps=\eps \delta_0$ in \eqref{abstract_wave_eq_Kuznetsov}) has been studied. 

The conditions on the kernels for the Blackstock case are weaker than those for the Kuznetsov case (\emph{cf}. \eqref{assumption3} versus \eqref{assumption3_Black}).

\subsection*{Organization of the paper} The rest of the present paper is organized as follows. In Section~\ref{Sec:AcousticModeling}, we motivate this study by 
recalling Kuznetsov's and Blackstock's equations with Gurtin--Pipkin heat flux law as derived in \cite{kaltenbacher2023limting}, and, to fix ideas, recall relevant classes of memory kernels in the context of acoustics. Section~\ref{Sec:Kuznetsov_prop}, will be dedicated to studying Kuznetsov's equation. We first first show uniform-in-$\eps$ well-posedness in Theorem~\ref{Thm:Wellp_2ndorder_nonlocal_Kuzn}, which will enable us to state the main result of the section relating to the continuity of the solution with respect to the parameter $\eps$; see Theorem~\ref{Thm:Limit}. From there, one can extract the limiting behavior of the equation as the parameter vanishes; see Corollary~\ref{Corollary:Limit_epsK} and Proposition~\ref{Prop:Limit_a<=b}. Section~\ref{Sec:Blackstock_prop} follows a similar organization and is dedicated to extending the results to the Blackstock setting. So as not to burden the presentation, we mainly focus in Section~\ref{Sec:Blackstock_prop} on the differences in the analysis between the Kuznetsov and Blackstock settings.

\subsection*{Notational conventions} Below, we often use the notation $A\lesssim B$ for $A\leq C\, B$ with a constant $C>0$ that may depend on final time $T$ and the spatial domain $\Omega$, but never on the small parameter $\eps$.
\\\indent We assume throughout that
$\Omega \subset \R^n$, where $n \in \{1, 2, 3\}$, is a bounded $C^4$-regular domain and introduce the following functional Sobolev spaces which are of interest in the analysis:
\begin{equation} \label{sobolev_withtraces}
	\begin{aligned}
		\Honetwo:=&\,H_0^1(\Omega)\cap H^2(\Omega), \ 
		\Honethree:=\, \left\{u\in H^3(\Omega)\,:\,  u|_{\partial\Omega} = 0, \  \D u|_{\partial\Omega} = 0\right\},\\ 
		\Honefour:=&\,\left\{u\in H^4(\Omega)\,:\, u|_{\partial\Omega} = 0, \  \D u|_{\partial\Omega} = 0 \right\}.
	\end{aligned}
\end{equation}
\\\indent Given final time $T>0$ and $p, q \in [1, \infty]$, we use $\|\cdot\|_{L^p (L^q(\Om))}$ to denote the norm on $L^p(0,T;L^q(\Omega))$ and $\|\cdot\|_{L^p_t (L^q(\Om))}$ to denote the norm on $L^p(0,t;L^q(\Omega))$ for $t \in (0,T)$. We use $(\cdot, \cdot)_{L^2}$ for the scalar product on $\Ltwo$.\\
\indent We denote by $\Lconv$ the Laplace convolution: 
$(f\Lconv g)(t)=\int_0^t f(t-s)g(s)\ds$. Thus, by defining the Abel kernel:
\begin{equation} \label{def_galpha}
	g_\alpha(t):= \frac{1}{\Gamma(\alpha)} t^{\alpha-1}, \quad \alpha \in (0,1) 
\end{equation}
and introducing the notational convention
\begin{equation} \label{def_g0}
	g_0:= \delta_0,
\end{equation}
where $\delta_0$ is the Dirac delta distribution, we may define the Djrbashian--Caputo fractional derivative, for $w \in W^{1,1}(0,t)$, by
\[
\Dt^{\eta}w(t)=g_{\lceil\eta\rceil - \eta} \Lconv \Dt^{\lceil\eta\rceil} w, \qquad -1<\eta <1,
\]
where $\lceil\eta\rceil$ is the smallest integer greater than or equal to $\eta$; see, for example,~\cite[\S 1]{kubica2020time} and~\cite[\S 2.4.1]{podlubny1998fractional}. When $\eta<0$, $\Dt^\eta$ is interpreted as a fractional integral.
\section{Modeling and relevant classes of kernels} \label{Sec:AcousticModeling}
Nonlinear acoustic equations with fractional dissipation arise as models of sound propagation through media with anomalous diffusion which can be described by a Gurtin--Pipkin flux law~\cite{gurtin1968general}. These laws are nonlocal-in-time relations between the heat flux $\bfq$ and the gradient of the temperature of the medium $\theta$:
\[\bfq= \frakK_{\delta,\tau} \Lconv \nabla \theta,\]
where $\frakK_{\delta,\tau}$ may depend on the thermal conductivity, $\kappa$ (and, \emph{in fine}, the sound diffusivity, $\delta>0$), and the thermal relaxation characteristic time, $\tau$.
We discuss below two classes of kernels that are important in the context of nonlinear acoustics.
\subsubsection*{(I) Abel kernels} The Abel memory kernel is given by
\begin{equation} \label{def_fractional_kernel}
	\frakK_{\delta} = \delta\rt^{-\alpha} g_{\alpha},
\end{equation}
with $g_\alpha$ defined in \eqref{def_galpha}.
The constant $\rt>0$ in \eqref{def_fractional_kernel} serves as a scaling factor to adjust for the dimensional inhomogeneity introduced by the fractional integral, in the way done in \cite[Appendix B.4.1.2]{holm2019waves}. 
\subsubsection*{(II) Mittag-Leffler-type kernels}\label{Sec:Gurtinpipkin_laws} These kernels depend on the thermal relaxation time $\tau <<1$ and are a generalization of the widely studied exponential kernel. They are given by 
\begin{equation} \label{ML_kernels}
	\begin{aligned}
		\frakK_\tau = \delta \left(\frac{\rt}{\tau}\right)^{a-b}\frac{1}{\tau^b}t^{b-1}E_{a,b}\left(-\left(\frac{t}{\tau}\right)^a\right),
	\end{aligned}
\end{equation}
where the scaling $\rt$ ensures dimensional homogeneity. We recall that the generalized Mittag-Leffler function is given by 
\begin{align} \label{def_MittagLeffler}
	E_{a, b}(t) = \sum_{k=0}^\infty \frac{t^k}{\Gamma(a k + b)}, \qquad a > 0,\ t,\, b \in \mathbb{R};
\end{align}
see, e.g.,~\cite[Ch.\ 2]{kubica2020time}.  {The case $a=b=1$ leads to the exponential kernel
	\begin{equation} \label{exponential_kernel}
	\frakK_\tau(t) = \frac{\delta}{\tau} \exp\left(-\frac{t}{\tau}\right).
	\end{equation}\\
	 \indent
As noted by~\cite{povstenko2011fractional}, the Mittag-Leffler functions allow one to recast, into a Gurtin--Pipkin form, the Compte--Metzler heat flux laws~\cite{compte1997generalized}, given by:
\begin{alignat}{3}
	\hspace*{-1.5cm}\text{\small(GFE I)}\hphantom{II}&& \qquad \qquad(1+\tau^\alpha \Dal)\boldsymbol{q}(t) =&&\, -\kappa {\rt^{1-\alpha}}\Doal \nabla \theta;\\[1mm]
	\hspace*{-1.5cm}\text{\small(GFE II)}\, \hphantom{I}&&\qquad \qquad (1+\tau^\alpha \Dal)\boldsymbol{q}(t) =&&\, -\kappa {\rt^{\alpha-1}}\Dt^{\alpha-1} \nabla \theta;\\[1mm]
	\hspace*{-1.5cm}\text{\small(GFE III)}\,\, && \qquad \qquad (1+\tau \partial_t)\boldsymbol{q}(t) =&&\, -\kappa {\rt^{1-\alpha}}\Doal \nabla \theta; \\[1mm]
	\hspace*{-1.5cm}\text{\small (GFE)}\hphantom{III}&&\qquad \qquad  (1+\tau^\alpha \Dal)\boldsymbol{q}(t) =&&-\kappa \nabla \theta,\hphantom{{\rt^{1-\alpha}}\Doal }
\end{alignat}
where $\kappa$ has the usual dimension of thermal conductivity. 
In particular, the parameters $(a,b)$ of the Mittag-Leffler kernel should be chosen as in Table~\ref{tab:ker_par_mod}. Here, $\alpha\in(0,1)$ ($\alpha \in (1/2,1)$ for GFE I~\cite{zhang2014time}) is a fractional differentiation parameter. \\
\begin{center}
	\begin{tabular}[h]{|c||c|c|c|c|}
		\hline
		~& GFE I & GFE II & GFE III & GFE \\
		\hline \hline
		$a$&$\alpha$ & $\alpha$ & $1$ & $\alpha$ \\
		\hline
		$b$	&$2\alpha-1$ & $1$ & $\alpha$ & $\alpha$ \\
		\hline
	\end{tabular}
	~\\[2mm]
	\captionof{table}{Parameters for the Mittag-Leffler kernels motivated by the Compte--Metzler laws}\label{tab:ker_par_mod}
\end{center}
We refer the reader to, e.g, \cite{jordan2014second,kaltenbacher2022time,kaltenbacher2023limting} for details on how the assumed heat flux affects the derivation of acoustic models.
\subsubsection*{Properties of the Mittag-Leffler functions}
When  $1 \geq b \geq a > 0$, the Mittag-Leffler kernel~\eqref{ML_kernels} is completely monotone; see~\cite[Corollary 3.2]{jin2021fractional}. Further, we will rely on the asymptotic behavior of Mittag-Leffler functions when establishing the convergence rates of the studied equation in Section~\ref{Sec:limits_Kuzn}. We recall a useful result to this end:
\begin{align}  \label{asymptotics}
	E_{a,b}(-x)\sim\frac{1}{\Gamma(b-a)\, x} {\mbox{ as }x\to\infty} \qquad \textrm{ where } b\geq a>0;
\end{align}
see, e.g., \cite[Theorem 3.2]{jin2021fractional}. 

\subsection*{Unifying the physical parameters} We wish to investigate the limiting behavior in $\delta$ and in $\tau$ of the resulting nonlocal Kuznetsov and Blackstock equations. Since the uniform well-posedness analysis in both cases is qualitatively the same, we unify the physical parameters $\delta$ and $\tau$ into one parameter $\eps$. Thus, setting 
\begin{equation}
	\frakKeps= \eps \frakK \quad \text{with } \ 
\frakK(t)= \left\{
	 \begin{aligned}
		& 	\rt^{-\alpha} g_{\alpha}(t)\\
		& \textrm {or} \\[-2mm]
		& \left(\frac{\rt}{\tau}\right)^{a-b}\frac{1}{\tau^b}t^{b-1}E_{a,b}\left(-\left(\frac{t}{\tau}\right)^a\right) \ \textrm{with $\tau$ fixed}
	\end{aligned}
	\right.
\end{equation}
will allow us to cover Abel and Mittag-Leffler kernels and study their vanishing sound diffusivity limit,
whereas choosing \begin{equation}
	\begin{aligned}
		\frakKeps
		=&\,  \delta \left(\frac{\rt}{\eps}\right)^{a-b} \frac{1}{\eps}\frakK\left(\frac{t}{\eps}\right) \quad \text{with } \ \frakK(t)= t^{b-1} E_{a,b}(-t^a)
	\end{aligned}
\end{equation}
covers the setting of Mittag-Leffler kernels and let the thermal relaxation tend to zero there.%\barbara{Can't we also send $\delta$ to zero while keeping $\tau>0$ fixed in the ML kernels?}

Using the heat flux kernels introduced above, we can state the quasilinear wave equations of interest, namely,
the nonlocal wave equation of Kuznetsov type~\cite{kuznetsov1971equations}
\begin{equation} \label{Kuznetsov_nonlocal}
	(1+2\tk\psi_t)\psi_{tt}-c^2 \Delta \psi -  \frakKeps* \D\psi_{t}+ \ell\partial_t |\nabla \psi|^2=0
\end{equation}
and the nonlocal wave equation of Blackstock type~\cite[p.\ 20]{blackstock1963approximate} 
\begin{equation} \label{Blackstock_nonlocal}
	\begin{aligned}
		\begin{multlined}[t] \psi_{tt}-c^2 (1-2\tk\psi_t)\Delta \psi - \frakKeps* \D\psi_{t}+ \ell\partial_t |\nabla \psi|^2=0,  \end{multlined}.
	\end{aligned}
\end{equation}
For their derivation, we refer to~\cite[Section 2]{kaltenbacher2023limting}. Above, $\tk$, $\ell$ are real constants. The equations are expressed in terms of the acoustic velocity potential $\psi=\psi(x,t)$, which is related to the acoustic pressure $u$ by
\[
u = \varrho \psi_t,
\] 
where $\varrho$ is the medium density. As discussed in~\cite{povstenko2011fractional}, different choices of the kernel $\frakKeps$ lead to a rich family of flux laws that have appeared in the literature.

\subsection*{Assumptions on the memory kernel for both models}
In the considered equations, \eqref{Kuznetsov_nonlocal} and \eqref{Blackstock_nonlocal}, we refer to the nonlinearity in the leading term as Kuznetsov nonlinearity and in the second term as Blackstock nonlinearity. The type of nonlinearity present in the equation naturally plays a role in the uniform well-posedness (and  thus limiting) analysis. More precisely,  Kuznetsov and Blackstock nonlinearities will require different coercivity assumptions on the kernel; we refer to Sections~\ref{Sec:Kuznetsov_prop} and~\ref{Sec:Blackstock_prop} for details. \\
\indent However, to treat both equations, we need a uniform boundedness assumption on the kernel, which we state next. Note that because we intend to study the limiting behavior of the models as $\eps \searrow 0$, we may restrict our attention in the  analysis  on an interval $(0, \beps)$ for some fixed $\beps>0$ without loss of generality. Let final time $T>0$ and $\beps>0$.
\begin{center}
	\fbox{ 
		\begin{minipage}{0.9\textwidth}
			Given $\eps \in (0, \beps)$, the kernel satisfies
			\begin{equation}\label{assumption1} \tag{\ensuremath{\bf {A}_1}}
				\frakKeps \in L^1(0,T) \cup \{\eps \delta_0\}  
			\end{equation}
			with the $\varepsilon$-uniform bound:
			\begin{equation}
				\|\frakKeps\|_{\calM(0,T)} \leq \cAone.
			\end{equation}
		\end{minipage}
	}
\end{center}
We use $\|\cdot\|_{\calM(0,T)}$ to denote the total variation norm, which in our context should be understood as:
\begin{equation} \label{def_calM}
	\|\frakKeps\|_{\mathcal{M}(0,T)}=\begin{cases}
		%		1 &\text{ if }\frakKeps=  \delta_0,\\
		\varepsilon &\text{ if }\frakKeps= \varepsilon \delta_0,\\
		\|\frakKeps\|_{L^1(0,T)}&\text{ if }\frakKeps \in L^1(0,T).
	\end{cases}
\end{equation}
Furthermore, in order to extract sufficient regularity for the fixed-point proof we perform a bootstrap argument; see, e.g, estimate~\eqref{eq:bootstrap_Kuzn} below. The following assumption will be needed to control the arising convolution term.
\begin{center}
	\fbox{ 
		\begin{minipage}{0.9\textwidth}
			For all $y\in L^2(0, T; \Ltwo)$ and $t\in(0,T)$, 			\begin{equation}\label{assumption2}  \tag{\ensuremath{\bf{A}_{2}}}
				\int_0^t\bigl((\frakKeps\Lconv y_t)(s), y(s)\bigr)_{L^2(\Omega)}\ds\geq - \cAtwo\, \|y(0)\|^2_{L^2(\Omega)}, \quad
				y\in W^{1,1}(0,t;L^2(\Omega))
			\end{equation}
			where $\cAtwo>0$ does not depend on $\varepsilon \in(0,\beps)$. 
	\end{minipage}}
\end{center}

\section{Analysis of the nonlocal Kuznetsov equation}\label{Sec:Kuznetsov_prop}
In this section, we focus on the Kuznetsov equation~\eqref{Kuznetsov_nonlocal}.
We present the specific kernel assumptions needed in the well-posedness and the limiting analysis pertaining to this equation.  In Section~\ref{Sec:Blackstock_prop}, we point out the differences and subtleties of treating the Blackstock equation.
\subsection*{Kuznetsov-specific assumption on the memory kernel} \label{Sec:Assumption_Kuzn}
To treat the Kuznetsov nonlinearity, we need to be able to extract some regularity from the $\frakKeps$ term after testing. More precisely, we require the following:
\begin{center}
	\fbox{ 
		\begin{minipage}{0.9\textwidth}
			For all $y\in L^2(0, T; \Ltwo)$, it holds that 
			\begin{equation}\label{assumption3} \tag{\ensuremath{\bf {A}^\textbf{K}_{3}}}
			\begin{aligned}
			\int_0^{t} \intO \left(\frakKeps* y \right)(s) \,y(s)\dxs\geq \cAthree
			\int_0^{t} \|(\frakKeps* y)(s)\|^2_{\Ltwo} \ds 
			\end{aligned}
			\end{equation}
		for all $t\in(0,T)$, where $\cAthree>0$ does not depend on $\varepsilon \in(0,\beps)$.
		\end{minipage}}
\end{center}
\subsection{Uniform well-posedness of the Kuznetsov equation with fractional-type dissipation}
\indent We consider the following initial-boundary value problem: 
\begin{equation}\label{ibvp_Kuzn_general}
\left \{	\begin{aligned} 
&(1+2\tk\peps)\pepstt-c^2 \Delta \peps - \Delta \frakKeps * \pepst + 2 \ell \,\nabla \peps \cdot \nabla\pepst=  0\quad  &&\text{in } \Omega \times (0,T), \\
&\peps =0 \quad  &&\text{on } \partial \Omega \times (0,T),\\
&(\peps, \pepst)=(\psi_0, \psi_1), \quad  &&\text{in }  \Omega \times \{0\}.
\end{aligned} \right.
\end{equation}
\\\indent
Our aim, initially, is to establish the well-posedness of~\eqref{ibvp_Kuzn_general}, uniformly in $\eps$. This result will be the basis for the subsequent study of the limiting behavior. We introduce the mapping
$
\TK:\phi \mapsto \peps$,
where $\phi$ will belong to a ball in a suitable Bochner space and $\peps$ will solve the linearized problem 
\begin{subequations} \label{ibvp_2ndorder_lin_Kuzn}
	\begin{equation} \label{ibvp_2ndorder_lin_Kuzn:Eq}
	%	\left\{
	\begin{aligned}
	\aaa \peps_{tt}-c^2 \Delta \peps - \frakKeps* \D\peps_{t}+ \nabla \lll \ \cdot \nabla \peps_t=0 \ \text{in }\Omega\times(0,T),
	\end{aligned} %\right.
	\end{equation}
	with variable coefficients
	\begin{align}
	\aaa=1+2\tk \phi_t, \qquad \lll= 2 \ell \phi, \qquad \tk, \ell \in \R,
	\end{align}
	supplemented by the initial and boundary conditions:
	\begin{equation} \label{ibvp_2ndorder_lin_Kuzn:Conditions}
	\begin{aligned}
	(\peps, \peps_t) \vert_{t=0}=\,(\psi_0, \psi_1), \qquad \peps \vert_{\partial \Omega}=0,
	\end{aligned} 
	\end{equation}
\end{subequations}
the idea being that a fixed-point of this mapping ($\phi=\peps$) would solve the nonlinear problem.% \vanja{This is the same sentence as in part I}
\subsubsection{ Uniform well-posedness of a linear problem with variable coefficients} \label{Subsection:LinWellp}
\indent The well-definedness of the mapping and the fixed-point argument rest on the uniform well-posedness of the linear problem, which we therefore consider first.
This linear analysis is conducted using an energy method with a smooth semi-discretization in space and weak compactness arguments. In particular, the uniform analysis goes through by testing the time-differentiated PDE with $\Delta^2 \pepstt$ in combination with a bootstrap strategy. 
\begin{proposition}
	\label{Prop:WellP_Lin_2ndorder_nonlocal}
	Let assumptions \eqref{assumption1}, \eqref{assumption2}, and \eqref{assumption3} on the memory kernel hold. Let $\eps \in (0, \beps)$, $\tk$, $\ell \in \R$,  $T>0$, and let
	\begin{equation}
	\phi \in X_\phi:=W^{2,\infty}(0,T;\Honetwo)
	\cap W^{1,\infty}(0,T;\Honethree)
	\cap L^2(0,T;\Honefour).
	\end{equation}
	Assume that there exist $\overline{\aaa}$ and $\underline{\aaa}$, independent of $\eps$, such that the nondegeneracy condition
	\begin{equation} \label{non-deg_phi}
	0<\ulal \leq \aaa(\phi)=1+2\tk \phi_t(x,t)\leq \olal \quad \text{a.e. in } \ \Omega \times (0,T), 
	\end{equation} 
	holds. 
	Furthermore, assume that the initial conditions satisfy
	\begin{equation}
	(\psi_0, \psi_1) \in \begin{cases}
	\Honefour\times \Honethree \ \text{if } \frakKeps = \eps \delta_0 \ \text{or }  \frakKeps \equiv 0, \\[2mm]
	\Honefour\times \Honefour \ \text{if } \frakKeps \not \equiv 0 \in L^1(0,T) . 
	\end{cases}
	\end{equation}
	Then there exists $\Xi>0$, independent of $\eps$, such that if
	\begin{equation}\label{def_xi}
	\begin{aligned}
		\begin{multlined}[t] 
	\|\phi\|_{L^1(H^4(\Omega))} +	\|\phi_{t}\|_{L^1(H^3(\Omega))} +	\|\phi_{tt}\|_{L^1(H^2(\Omega))} \leq \Xi,
		\end{multlined}
	\end{aligned}
	\end{equation}
	and if the final time $T=T(\|\phi\|_{X_\phi})$ is small enough compared to $\cAthree$, then there is a unique  solution $\peps$ of \eqref{ibvp_2ndorder_lin_Kuzn} in
	\begin{equation}\label{solution_space_psi_Kuzn}
		\begin{aligned}
			X_{\psi} =& \,\begin{multlined}[t] W^{3,1}(0,T;\Honezero) \cap W^{2,\infty}(0,T;\Honetwo) \\
				  \cap W^{1,\infty}(0,T;\Honethree) \cap L^2(0,T;\Honefour).
			\end{multlined}
		\end{aligned}
	\end{equation} 
	This solution satisfies the estimate 
	\begin{equation} \label{Lin2ndorderNonlocal_Kuzn:Main_energy_est}
	\begin{aligned}
	& \begin{multlined}[t] \|\peps\|^2_{X_\psi}+ \int_0^{T} \|(\frakKeps* \nabla \D \pepstt)(s)\|^2_{\Ltwo} \ds 
	\end{multlined}\\
	{\leq C_{\textup{lin}}(T)\Bigl(}&\,  \|\psi_0\|^2_{H^4(\Om)}+\|\psi_1\|^2_{\Hthree}+ \eta \|\frakKeps\|_{L^1(0,T)} ^2\|\D^2 \psi_1\|^2_{\Ltwo}
	{\Bigr)},
	\end{aligned}
	\end{equation}
	where $\eta = 0$ if $\frakKeps \in \{ 0, \eps\delta_0 \}$ and $\eta =1$ otherwise. 
The constant $C_{\textup{lin}}(T)$ does not depend on $\eps$.
%\barbara{I know, this should be clear, but one cannot say it often enough ;)}
\end{proposition}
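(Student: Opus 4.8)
The approach is the standard one for linear hyperbolic equations with memory and variable coefficients: construct approximate solutions by a smooth spatial semidiscretization, derive a~priori bounds uniform in both the discretization index and $\eps$, pass to the limit by weak-$\ast$ compactness, and obtain uniqueness from linearity. First I would fix a Galerkin basis $\{w_k\}_{k\in\N}\subset\Honefour$ --- for instance the eigenfunctions of $-\D$ on $\Honezero$, which lie in $\Honefour$ by elliptic regularity and satisfy $\D w_k|_{\partial\Omega}=0$ --- and seek the approximations $\psi^m=\sum_{k=1}^m d_k^m(t)\,w_k$ (suppressing $\eps$ in this notation) solving the finite-dimensional projection of \eqref{ibvp_2ndorder_lin_Kuzn:Eq} with projected data. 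Since by \eqref{non-deg_phi} the coefficient $\aaa=1+2\tk\phi_t$ is bounded and bounded below by $\ulal>0$, this is a linear Volterra integro-differential system with $L^\infty$-in-time coefficients, uniquely solvable on a maximal interval which the estimates below extend to $[0,T]$. In the local cases $\frakKeps\in\{0,\eps\delta_0\}$ the projection of $\psi_1\in\Honethree$ is enough, whereas in the genuinely nonlocal case $\psi_1\in\Honefour$ is used.

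\textbf{Core energy estimate.} The heart of the matter is to differentiate the semidiscrete equation in time and test with $\D^2\psi^m_{tt}$ (legitimate by the choice of basis), then integrate over $(0,t)\times\Omega$ and integrate by parts in space. The highest-order terms produce
\[
\tfrac12\,\ddt\int_\Omega \aaa\,|\D\psi^m_{tt}|^2\dx+\tfrac{c^2}{2}\,\ddt\|\nabla\D\psi^m_t\|_{\Ltwo}^2,
\]
which by \eqref{non-deg_phi} dominates $\|\D\psi^m_{tt}(t)\|_{\Ltwo}^2+\|\nabla\D\psi^m_t(t)\|_{\Ltwo}^2$. Writing $\partial_t(\frakKeps\Lconv\D\psi^m_t)=\frakKeps(t)\D\psi_1+\frakKeps\Lconv\D\psi^m_{tt}$ in the $L^1$-kernel case (the kernel $\eps\delta_0$ producing no boundary contribution, whence $\eta=0$), the convolution term splits, after two integrations by parts, into $\int_0^t\!\int_\Omega(\frakKeps\Lconv\nabla\D\psi^m_{tt})\cdot\nabla\D\psi^m_{tt}\dxs$ --- which \eqref{assumption3} bounds below by $\cAthree\int_0^t\|\frakKeps\Lconv\nabla\D\psi^m_{tt}\|_{\Ltwo}^2\ds$, precisely the additional left-hand quantity in \eqref{Lin2ndorderNonlocal_Kuzn:Main_energy_est} --- and a boundary term controlled by $\|\frakKeps\|_{L^1(0,T)}\|\D^2\psi_1\|_{\Ltwo}\|\D\psi^m_{tt}\|_{L^\infty_t(\Ltwo)}$, which Young's inequality converts into the right-hand term $\eta\|\frakKeps\|_{L^1(0,T)}^2\|\D^2\psi_1\|_{\Ltwo}^2$ plus an absorbable fraction of $\|\D\psi^m_{tt}\|_{L^\infty_t(\Ltwo)}^2$ (this is where $\psi_1\in\Honefour$, i.e.\ $\D^2\psi_1\in\Ltwo$, is used).

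\textbf{Coefficient and gradient terms; closing.} The remaining contributions --- $\int_0^t\!\int_\Omega\aaa_t\psi^m_{tt}\D^2\psi^m_{tt}$, $\int_0^t\!\int_\Omega(\nabla\lll_t\cdot\nabla\psi^m_t)\D^2\psi^m_{tt}$, $\int_0^t\!\int_\Omega(\nabla\lll\cdot\nabla\psi^m_{tt})\D^2\psi^m_{tt}$, together with the lower-order commutators left over from the principal terms --- are handled by redistributing the two Laplacians through integration by parts so that at most two spatial derivatives ever act on $\psi^m_t$ or $\psi^m_{tt}$, the surplus falling on $\aaa=1+2\tk\phi_t$ or $\lll=2\ell\phi$, which are controlled in $\Htwo$, $\Hthree$, $\Honefour$ by $\|\phi\|_{X_\phi}$; the Sobolev embeddings valid for $n\le 3$ ($\Htwo\hookrightarrow\Linf$, $\Hone\hookrightarrow L^6(\Omega)$) take care of the Hölder bookkeeping. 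Crucially, the time integrals of the factors carrying $\phi_{tt}$, $\phi_t$, $\phi$ are bounded respectively by $\|\phi_{tt}\|_{L^1(\Htwo)}$, $\|\phi_t\|_{L^1(\Hthree)}$, $\|\phi\|_{L^1(H^4(\Omega))}\le\Xi$, so choosing $\Xi$ small absorbs these into the left-hand side, and Grönwall's inequality disposes of the remaining lower-order ($L^1$-in-time) terms, the resulting constant depending on $T$, $\Omega$ and $\|\phi\|_{X_\phi}$ but not on $\eps$. Evaluating the (differentiated) equation at $t=0$ bounds $\psi^m_{tt}(0)$ and $\psi^m_{ttt}(0)$ in terms of the data --- here the $H^4$-regularity of $\psi_0,\psi_1$ and the $\eps$-uniform bound \eqref{assumption1} enter. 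A complementary lower-order testing (with $\psi^m_t$, $-\D\psi^m_t$ and $\D^2\psi^m$), using \eqref{assumption2} to absorb the memory term after differentiating the equation once or twice in space, together with the elliptic estimate $\|v\|_{\Honefour}\lesssim\|\D^2 v\|_{\Ltwo}$, then supplies the $L^\infty_t(\Honefour)$- and $W^{3,1}(0,T;\Honezero)$-components of $\|\psi^m\|_{X_\psi}$, the equation rewritten as an elliptic identity for $\D^2\psi^m$; taking $T$ small compared to $\cAthree$ ensures the $\sqrt T$-type factors accompanying $\|\frakKeps\Lconv(\cdot)\|_{\LtwoLtwot}$ in this step do not overpower the coercivity furnished by \eqref{assumption3}.

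\textbf{Passage to the limit, uniqueness, and the main obstacle.} The $m$- and $\eps$-uniform bounds yield, along a subsequence, weak-$\ast$ limits in each space defining $X_\psi$; by linearity of both the equation and the projections one passes to the limit to identify a solution of \eqref{ibvp_2ndorder_lin_Kuzn}, the initial conditions being recovered in the standard way and \eqref{Lin2ndorderNonlocal_Kuzn:Main_energy_est} surviving by weak lower semicontinuity of norms. Uniqueness is immediate: the difference of two $X_\psi$-solutions with the same data solves \eqref{ibvp_2ndorder_lin_Kuzn:Eq} with zero data, and the same energy identity --- now with vanishing right-hand side --- forces it to be $0$. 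I expect the genuine difficulty to be the treatment of the time-differentiated Kuznetsov nonlinearity $\nabla\lll\cdot\nabla\psi_{tt}$ against the top-order multiplier $\D^2\psi_{tt}$: the integrations by parts must be arranged so that one never requires more than $\Htwo$-regularity of $\psi_{tt}$ (resp.\ $\Hthree$ of $\psi_t$), and then the $L^1$-in-time smallness $\le\Xi$ of the $\phi$-factors must be exactly what closes the estimate. A close second is keeping every constant $\eps$-independent: this rests entirely on invoking only the $\eps$-uniform \eqref{assumption1}, \eqref{assumption2}, \eqref{assumption3}, and on the observation that the sole $\eps$-dependent right-hand-side quantity, $\|\frakKeps\|_{L^1(0,T)}\|\D^2\psi_1\|_{\Ltwo}$, appears only through the data.
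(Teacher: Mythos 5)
Your overall architecture (Galerkin basis of Dirichlet eigenfunctions, time-differentiation, testing with $\D^2\psi_{tt}$, \eqref{assumption3} for the kernel coercivity, an \eqref{assumption2}-based elliptic bootstrap for the $H^4$-in-space component, weak-$\ast$ limits and uniqueness by linearity) coincides with the paper's. But there is a genuine gap at the heart of the energy estimate: testing $\aaa\,p_{tt}$ (with $p=\psi_t$) against $\D^2 p_t$ produces, after moving the Laplacians, the commutator terms $\bigl(p_{tt}\,\D\aaa+2\nabla p_{tt}\cdot\nabla\aaa,\,\D p_t\bigr)_{L^2}$, which involve the \emph{third time derivative} $\psi_{ttt}$. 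Your proposed mechanism — ``redistributing the two Laplacians so that at most two spatial derivatives ever act on $\psi_t$ or $\psi_{tt}$,'' plus smallness of $\Xi$ and Gr\"onwall — does not touch these terms: the obstruction is the order in time, not in space, and $\nabla\psi_{ttt}$ is controlled neither by the energy quantities $\|\D\psi_{tt}\|_{L^\infty_t(\Ltwo)}$, $\|\nabla\D\psi_t\|_{L^\infty_t(\Ltwo)}$ nor by the kernel term $\|\frakKeps\Lconv\nabla\D\psi_{tt}\|_{L^2_t(\Ltwo)}$; further spatial integration by parts only trades it for $\nabla\D\psi_{tt}$ (three spatial derivatives on $\psi_{tt}$, equally uncontrolled), and integration by parts in time makes it worse.

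The paper closes this by solving the time-differentiated PDE for $\nabla p_{tt}$ (using the nondegeneracy $\ulal\le\aaa$), estimating $\|\nabla p_{tt}\|_{L^1_t(\Ltwo)}$ as in \eqref{est_nabla_p_tt_}, and then — this is the decisive point — absorbing the squared kernel contribution $T\|\frakKeps\Lconv\nabla\D p_t\|^2_{L^2_t(\Ltwo)}$ into the coercivity term furnished by \eqref{assumption3}, via \eqref{est_kernel_terms}; this is precisely where the hypothesis ``$T$ small compared to $\cAthree$'' is used, and it is also how the $W^{3,1}(0,T;\Honezero)$ component of $X_\psi$ is obtained. In your proposal the $T$-versus-$\cAthree$ condition is instead invoked only in the complementary/bootstrap step (where the paper actually uses \eqref{assumption2}, not \eqref{assumption3}), and the $W^{3,1}$ regularity is attributed to a vague ``lower-order testing'' with $\psi_t$, $-\D\psi_t$, $\D^2\psi$, which does not yield $\psi_{ttt}\in L^1(0,T;\Honezero)$. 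Without the substitution-from-the-PDE device (or an equivalent bound on $\psi_{ttt}$), the main estimate \eqref{ineq_1} does not close. Two smaller inaccuracies: the claimed $L^\infty_t(\Honefour)$ bound is more than the bootstrap \eqref{eq:bootstrap_Kuzn} delivers (and more than $X_\psi$ requires, which is only $L^2(0,T;\Honefour)$), and the paper closes the estimate by absorption through smallness of $\Xi$ and $T$ rather than by Gr\"onwall.
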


	\subsection*{Discussion of the statement} Before proceeding to the proof, let us discuss the statement made above.
\begin{itemize}
 \item Proposition~\ref{Prop:WellP_Lin_2ndorder_nonlocal} ensures well-posedness of the linearized problem \eqref{ibvp_2ndorder_lin_Kuzn} under the condition that the final time $T=T(\|\phi\|_{X_\phi})$ is small enough compared to $\cAthree$. 
 The high order in space of the testing strategy combined with the time-fractional evolution lead to such a condition.
 \\\indent
 While the relationship between $T$ and $\cAthree$ is made more precise below (see inequality \eqref{est_kernel_terms}), we want to point out that one of the ways to extend the well-posedness final time $T$ is by making $\cAthree$ larger which can be ensured if we have a smaller $\delta$-parameter (see discussion page~\pageref{paragaraph:delta_discussion} for explanations on the relationship between $\delta$ and $\cAthree$).
\item It is noteworthy that we need more regular data when $\frakKeps \not \equiv 0 \in L^1(0,T)$.
	This is primarily due to needing to work with the time-differentiated equation in the analysis (see \eqref{time_diff_discr_2ndorder_nonlocal_Kuzn}) in order to extract sufficient regularity of the solution to treat the nonlinearities. For that we will use the differentiation rule
	$$ \left(\int_0^t \frakKeps(t-s) \D \pepst(s)\ds\right)_t = \int_0^t \frakKeps(t-s) \D \pepstt(s)\ds + \frakKeps(t) \D \peps_{t}(0),$$
	which invokes a regularity assumption on $\D \pepst(0)$.
\end{itemize}
\begin{proof}
We conduct the analysis based on energy arguments performed on a Galerkin semi-discretization in space of the problem; see, for example, \cite{kaltenbacher2023limting} for similar arguments. We emphasize that compared to \cite{kaltenbacher2023limting}, the presence of the quadratic gradient nonlinearity ($2\ell\,\nabla \peps \cdot \nabla \pepst)$ combined with the time-derivative in the quasilinear coefficient ($1+2\tk\pepst$) in equation~\eqref{ibvp_Kuzn_general} will necessitate extracting more regularity from the linearized equation's solution, in turn leading to a higher-order testing.\\
\indent	 The existence of a unique approximate solution follows by reducing the semi-discrete problem to a system of Volterra integral equations of the second kind, employing usual arguments; cf.~\cite{kaltenbacher2022time} and~\cite[Appendix A]{kaltenbacher2023limting}.  We omit them here. 
For notational simplicity, we drop the superscript discretization parameter 
	when denoting the approximate solution.\\
	\indent  We follow the approach of~\cite{kaltenbacher2022parabolic}, where the local-in-time Kuznetsov equation is studied, and consider a time-differentiated semi-discrete problem. We set $p=\peps_t$, where the dependence of $p$ on $\eps$ is omitted to simplify the notation. 
	The time-differentiated semi-discrete equation is then given by
	\begin{equation} \label{time_diff_discr_2ndorder_nonlocal_Kuzn}
	\aaa p_{tt}-c^2\Delta p - \frakKeps* \D p_{t}= F
	\end{equation}
	with the right-hand side
	\begin{align} \label{def_F}
	F(t)= -\nabla \lll_t \cdot \nabla p-\nabla \lll \cdot \nabla p_t 
	-\aaa_t p_t + \eta \frakKeps(t) \D p(0),
	\end{align}
	where $\eta$ is as in the statement of the proposition and $p(0)= \psi_1$. We intend to multiply \eqref{time_diff_discr_2ndorder_nonlocal_Kuzn} with $(-\Delta)^2p_t$ and integrate over space and $(0,t)$. When integrating by parts in the estimates below, we rely on the fact that $p_{tt}=\Delta p=\Delta p_t=0$ on the boundary for sufficiently smooth Galerkin approximations.\\
	\indent We note that
	\begin{equation}
	\begin{aligned}
	&(\aaa p_{tt}, (-\Delta)^2 p_{t})_{L^2} \\
	=&\,  \frac12\ddt(	\aaa\D p_{t}, \Delta p_t)_{L^2}-\frac12(\aaa_t\D p_{t}, \Delta p_t)_{L^2}+(p_{tt}\, \D\aaa+2\nabla p_{tt}\cdot\nabla\aaa, \Delta p_t)_{L^2} .		
	\end{aligned}
	\end{equation}
	Additionally, 
	\begin{equation} \label{c_term_est}
	\begin{aligned}
	&-c^2( \Delta p, (-\Delta)^2 p_{t})_{L^2}
	%	=&\, c^2(\bbb \nabla \Delta p+ \Delta p \nabla \bbb, \nabla \Delta p_t)_{L^2} \\ 
	=&\, \frac{c^2}{2}\ddt  ( \nabla \Delta p, \nabla \Delta p)_{L^2}.
	\end{aligned}
	\end{equation}
	By the time-differentiated semi-discrete PDE  \eqref{time_diff_discr_2ndorder_nonlocal_Kuzn}, we have $F=0$ on the boundary and thus
	\[
	(F, (-\Delta)^2p_t)_{L^2}=(\Delta F, \Delta p_t)_{L^2}.
	\]
	Therefore, testing \eqref{time_diff_discr_2ndorder_nonlocal_Kuzn} with $(-\Delta)^2 p_t$ and using assumption \eqref{assumption3} yields the inequality 
	\begin{equation}\label{ineq_1}
	\begin{aligned}
	&\frac12 \nLtwo{\sqrt{\aaa}\D p_t}^2 \big \vert_0^t
	+\frac{c^2}{2} \nLtwo{\nabla\Delta p}^2\big \vert_0^t 
	+ \cAthree \int_0^{t} \|(\frakKeps* \nabla \D \pt)(s)\|^2_{\Ltwo} \ds \\
	\leq&\, \begin{multlined}[t] \int_0^t\Bigl(
	\frac12(\aaa_t\D p_t,\D p_t)_{L^2}
	-( p_{tt}\, \D\aaa+2\nabla p_{tt}\cdot\nabla\aaa,\D p_t)_{L^2}
	\Bigr)\ds \\ +\int_0^t (\D F,\D p_t)_{L^2} \ds. \end{multlined}
	\end{aligned}
	\end{equation}
We note that the value of $\|\D p_t (0)\|_{L^2(\Om)}=\|\D \psi_{tt} (0)\|_{L^2(\Om)}$, can be bounded by testing \eqref{ibvp_Kuzn_general} at $t=0$ by $\D^2 p_t(0)$ to obtain:
	\[
	\| \D p_t (0) \|_{L^2(\Om)} \leq \left\|\D\left[\frac 1\aaa \left(c^2\D\psi_0 - 2\ell \nabla \psi_0 \cdot \nabla \psi_1 \right)\right]\right\|_{L^2(\Om)}.
	\]
	\\
	The $\aaa$ terms on the right-hand side of \eqref{ineq_1} can be estimated as follows. We have
	\[
	\begin{aligned}
	\int_0^t\frac12(\aaa_t\D p_t,\D p_t)_{L^2}\ds
	\leq\,&\frac12\|\aaa_t\|_{L^1(\Linf)} \|\D p_t\|_{L^\infty_t(\Ltwo)}^2 \\ \leq\,& \|\phi_{tt}\|_{L^1(\Linf)} \|\D p_t\|_{L^\infty_t(\Ltwo)}^2 \lesssim \Xi\|\D p_t\|_{L^\infty_t(\Ltwo)}^2,
	\end{aligned}
	\]
	where in the last inequality, we have used the embedding $H^2(\Omega)\hookrightarrow L^\infty(\Omega)$ combined with \eqref{def_xi}.\\\indent  
	Secondly, by H\"older and  Poincar\'e--Friedrichs as well as Young's inequalities, and the embedding $H^1(\Omega) \hookrightarrow L^6(\Omega)$, we have
	\begin{equation} \label{est_with_nabla_p_tt_}
	\begin{aligned}
	&\int_0^t
	-(p_{tt}\, \D\aaa + \nabla p_{tt}\cdot\nabla\aaa,\D p_t)_{L^2}
	\ds  \\
		\lesssim&\,  \|\aaa\|_{L^\infty(H^3(\Om))}\left(  
		\|\nabla p_{tt}\|_{L^1_t(\Ltwo)}^2+\gamma\|\D p_t\|_{L^\infty_t(\Ltwo)}^2 \right) \\
	\lesssim&\, \|\phi\|_{X_\phi}\left(  
	\|\nabla p_{tt}\|_{L^1_t(\Ltwo)}^2+\gamma\|\D p_t\|_{L^\infty_t(\Ltwo)}^2 \right)
	\end{aligned}
	\end{equation}
for any $\gamma>0$.	The second term on the right-hand side above will be absorbed by the left-hand side for sufficiently small $\gamma$. To estimate  $\|\phi\|_{X_\phi}\|\nabla p_{tt}\|_{L^1_t(\Ltwo)}^2$, we use the (semi-discrete) time-differentiated PDE to express $\nabla \ptt$:
	\begin{equation}
	\begin{aligned}
	\nabla p_{tt}= \nabla \left[ \frac{1}{\aaa} \left( c^2 \bbb\Delta p +\frakKeps* \D p_{t}+ F \right)\right].
	\end{aligned}
	\end{equation}
	From here and the uniform boundedness of $\aaa$ in \eqref{non-deg_phi}, we have 
	\begin{equation}\label{est_nabla_p_tt_}
	\begin{aligned}
	&\|\nabla p_{tt}\|_{L^1_t(\Ltwo)} \\
	\lesssim&\, \begin{multlined}[t]
	\|\nabla\aaa\|_{L^\infty(\Linf)} \left\{\|\Delta p\|_{L^1_t(\Ltwo)} + \left \| \frakKeps *\D p_{t}\right\|_{L^1_t(\Ltwo)}+\|F\|_{L^1_t(\Ltwo)} \right\}\\
	+\|\nabla \Delta p\|_{L^\infty_t(\Ltwo)}  
	+  \left\| \frakKeps* \nabla\D p_{t}\right\|_{L^1(\Ltwo)}+\|\nabla F\|_{L^1_t(\Ltwo)}.\end{multlined}
	\end{aligned}
	\end{equation}
	Recall that in view of \eqref{est_with_nabla_p_tt_}, we need an estimate of $	\|\phi\|_{X_\phi}\|\nabla p_{tt}\|_{L^1(\Ltwo)}^2$. After squaring \eqref{est_nabla_p_tt_}, the corresponding kernel terms  can be absorbed by the left-hand side of \eqref{ineq_1} by relying on the smallness of $T\|\phi\|_{X_\phi}$ in front of them (relative to $\cAthree$):
	\begin{equation} \label{est_kernel_terms}
	\begin{aligned}
	&\|\phi\|_{X_\phi}
	\left({\|\nabla\aaa\|^2_{L^\infty(\Linf)}}\left\| \frakKeps* \D p_{t}\right\|_{L^1_t(\Ltwo)}^2
	+\left\|\frakKeps *\nabla\D p_{t} \right\|_{L^1_t(\Ltwo)}^2\right)\\
	\lesssim&\,  (1+\|\phi\|^2_{X_\phi})\|\phi\|_{X_\phi}\left\|\frakKeps *\nabla\D p_{t} \right\|_{L^1_t(\Ltwo)}^2 \\
	\lesssim&\,  (1+\|\phi\|^2_{X_\phi})\|\phi\|_{X_\phi}T\left\|\frakKeps *\nabla\D p_{t} \right\|_{L^2_t(\Ltwo)}^2. 
	\end{aligned}
	\end{equation}
	It remains to estimate the $F$ and $\nabla F$ terms in \eqref{est_nabla_p_tt_}, as well as $\Delta F$ term in \eqref{ineq_1}. Since $F \vert_{\partial \Omega}=0$, we have
	\[
	\| F\|_{L^1_t(H^2(\Om))} \lesssim \|\D F\|_{L^1_t(\Ltwo)}
	\]
	and it is thus sufficient to bound $\D F$. Recalling how $F$ is defined in \eqref{def_F},
	we have 
	\begin{equation} \label{Delta_tildef}
	\begin{aligned}
	\D F
	=\begin{multlined}[t]
	-\D\aaa_t p_t-2\nabla\aaa_t\cdot\nabla p_t-\aaa_t \D p_t
	+\Delta(-\nabla \lll_t \cdot \nabla p-\nabla \lll \cdot \nabla p_{t} )+ \eta \frakKeps(s) \D^2 p(0), \end{multlined}
	\end{aligned}
	\end{equation}
	with further
	\begin{equation} \label{Delta_tildef_}
	\begin{aligned}
	&\Delta(-\nabla \lll_t \cdot \nabla p-\nabla \lll \cdot \nabla p_{t} )\\
	=&\,\begin{multlined}[t] -\nabla \D \lll_t \cdot \nabla p-2D^2 \lll_t : D^2 p-\nabla\lll_t \cdot \nabla \D p\\ -\nabla \D \lll \cdot \nabla p_t-2D^2 \lll: D^2 p_t-{\nabla\lll\cdot}\nabla \D p_t,  \end{multlined}\\
	\end{aligned}
	\end{equation}
	where $D^2 v=(\partial_{x_i} \partial_{x_j} v)_{ij}$ denotes the Hessian. 
	Since the last term in \eqref{Delta_tildef_} is problematic to estimate in the $L^1(0,t; L^2(\Omega))$ norm, given the regularity we can expect from $\nabla \Delta p_t$ based on the left-hand side of \eqref{ineq_1}, we split it in the following manner:
	\begin{equation} \label{Delta_tildef_Deltapt_second}
	\begin{aligned}
	\int_0^t (\D F,\D p_t)_{L^2} \ds
	\leq&\,  
	\|\D F-g\|_{L^1(\Ltwo)} \|\D p_t\|_{L^{\infty}_t(\Ltwo)}
	+\left|\int_0^t (g,\D p_t)_{\Ltwo} \ds\right|,
	\end{aligned}
	\end{equation}
	where $g=\nabla\lll\cdot\nabla \D p_t$. Then, since $\Delta p_t=0$ on $\partial \Omega$, 
	\begin{equation} \label{ineq:g_estimate}
	\begin{aligned}
	\left|\int_0^t (g,\D p_t)_{L^2} \ds\right|
	=\frac12\left|\int_0^t (\D\lll,(\D p_t)^2)_{L^2} \ds\right| 
	\lesssim&\, \|\Delta \lll\|_{L^1(\Linf)} \|\D p_t\|_{L^\infty_t(\Ltwo)}^2\\
	\lesssim& \, 	\Xi \|\D p_t\|_{L^\infty_t(\Ltwo)}^2
	\end{aligned}
	\end{equation}
with $\Xi$ as in \eqref{def_xi}.
	We then estimate the remaining $F-g$ terms as follows:
	\begin{equation}  \label{est_Delta_tildef}
	\begin{aligned}
	&\|\D F-g\|_{L^1(\Ltwo)}\\
	\lesssim&\, \begin{multlined}[t]
	\|\D\aaa_t\|_{L^1(\Ltwo)} \|p_t\|_{L^\infty_t(\Linf)}
	+\|\nabla\aaa_t\|_{L^1(L^3(\Om))} \|\nabla p_t\|_{L^\infty_t(L^6(\Om))}
	\\+\|\aaa_t\|_{L^1(L^\infty(\Om))} \|\D p_t\|_{L^\infty_t(\Ltwo)}
	\\	+\|\nabla \D \lll_t \cdot \nabla p \|_{L^1(\Ltwo)}+\|\nabla \lll_t \cdot \nabla \D p \|_{L^1_t(\Ltwo)}+\|D^2 \lll_t : D^2 p \|_{L^1_t(\Ltwo)}\\
	+\|\nabla \D \lll \cdot \nabla p_t \|_{L^1_t(\Ltwo)}
	+\|D^2 \lll: D^2 p_t \|_{L^1_t(\Ltwo)}
	+ \eta \|\frakKeps\|_{L^1(0,t)} \|\D^2 \psi_1\|_{\Ltwo}, \end{multlined}
	\end{aligned}
	\end{equation}
	where we can further bound the $\lll$ terms above: 
	\begin{equation} \label{estlll1}
	\begin{aligned}
	&\|\nabla \D \lll_t \cdot \nabla p \|_{L^1_t(\Ltwo)}
	{+\|\nabla \lll_t \cdot \nabla \D p \|_{L^1_t(\Ltwo)}}
	+\|\nabla \D \lll \cdot \nabla p_t \|_{L^1_t(\Ltwo)}\\
	\lesssim&\, 	\Xi(\|\nabla p \|_{L^\infty_t(H^2(\Om))}+\|\D p_t \|_{L^\infty_t(\Ltwo)}).
	\end{aligned}
	\end{equation}
	\noindent By elliptic regularity, the Hessian satisfies 
	\[
	\|D^2 v\|_{L^p(\Om)}\leq C_{\textup{H}} \|\Delta v\|_{L^p(\Om)} 
	\]
	for all $v\in H_0^1(\Omega)\cap W^{2,p}(\Omega)$ where $p\in (1,6]$; see, e.g., \cite[Theorem 2.4.2.5]{grisvard2011elliptic}. Therefore, together with the embedding $H^1(\Omega) \hookrightarrow L^6(\Omega)$ and Poincar\'e--Friedrichs inequality, we have the bound
	\begin{equation} \label{estlll2}
	\begin{aligned}
	\|D^2 \lll_t : D^2 p \|_{L^1(\Ltwo)} \lesssim \|D^2 \lll_t \|_{L^1(L^3(\Om))}\|D^2 p \|_{L^\infty_t(L^6(\Om))} 
	\lesssim \, 	\Xi \|\nabla \D p \|_{L^\infty_t(\Ltwo)}
	\end{aligned}
	\end{equation}
	and, similarly, 
	\begin{equation}\label{estlll3_Honefour}
	\|D^2 \lll: D^2 p_t \|_{L^1(\Ltwo)}  \lesssim \|D^2 \lll \|_{L^1(\Linf)} \|\D p_t \|_{L^\infty_t(\Ltwo)} \lesssim 	\Xi \|\D p_t \|_{L^\infty_t(\Ltwo)}.
	\end{equation}
	Altogether, we can write
	\begin{equation}  \label{est_Delta_tildef_second}
	\begin{aligned}
	\|\D F -g \|_{L^1_t(\Ltwo)}
	\lesssim&\, \begin{multlined}[t]
		\Xi\left(\|\D p_t\|_{L^\infty_t(\Ltwo)}+\|\nabla \D p \|_{L^\infty_t(L^2(\Om))} \right)
	\\
	+ \eta \|\frakKeps\|_{L^1(0,t)} \|\D^2 \psi_1\|_{\Ltwo}. \end{multlined}
	\end{aligned}
	\end{equation}
	Thus, combining the derived bounds for sufficiently small $\Xi$ allows us to absorb all right-hand side terms by the left-hand side in \eqref{ineq_1} and thus, combined with \eqref{est_nabla_p_tt_},
	leads to
	\begin{equation}\label{ineq:intermediate}
		\begin{aligned}
			& \|\nabla p_{tt}\|_{L^1_t(\Ltwo)}^2 + \nLtwo{\D p_t}^2			+\nLtwo{\nabla\Delta p}^2
			+  \int_0^{t} \|(\frakKeps* \nabla \D \pt)(s)\|^2_{\Ltwo} \ds \\
			\leq&\,  { C_{\textup{lin}}(T)\Bigl(}  \|\psi_0\|^2_{H^4(\Om)}+ \| \psi_1\|^2_{H^3(\Om)}+ \eta \|\frakKeps\|_{L^1(0,T)} ^2\|\D^2 \psi_1\|^2_{\Ltwo}
			{\Bigr)}.
		\end{aligned}
	\end{equation}
	
	\noindent \emph{Bootstrap argument for $\peps \in L^2(0,T;\Honefour)$}.\label{paragraph:bootstrap} If $\ell \not=0$, then, due to inequality~\eqref{estlll3_Honefour} and the perspective of the fixed-point argument, we have to estimate the term $\D^2 \peps$. To this end, we can employ the additional regularity to be achieved by estimating 
	${\|\Delta^2 \peps\|_{L^2(L^2(\Om))}}$ via a bootstrap argument
	analogously to~\cite[p.\ 22]{kaltenbacher2022parabolic}. 
	Multiplying the identity 
	\begin{align} \label{bootstrap_identity}
		[\frakKeps\Lconv\partial_t+ c^2]\D\peps =r:=\aaa p_t+\nabla \lll \cdot \nabla p
	\end{align}
	with 
	$\D^3\peps$ 
	and integrating by parts, due to assumption \eqref{assumption2}, we have
	\begin{equation}\label{eq:bootstrap_Kuzn}
	\begin{aligned}
		&	c^2\|\D^2\peps\|_{L^2_t(L^2(\Om))}^2\\\leq&\,
		\int_0^t(\D r,\D^2\peps)_{L^2}\ds + \cAtwo \|\Delta^2 \psi_0\|^2_{\Ltwo}\\ 
		\leq& \|\D^2\peps\|_{L^2_t(L^2(\Om))}
		\left\|\D [\aaa p_t-\nabla \lll \cdot \nabla p]\right\|_{L^2_t(L^2(\Om))}  + \cAtwo \|\psi_0\|^2_{H^4(\Om)},
	\end{aligned}
	\end{equation}
	where we have used the fact that $r$ and $\D^2 \peps$ vanish on the boundary.
	Thanks to \eqref{ineq:intermediate} and the fact that $\phi \in X_\phi$, we readily obtain that $\D [\aaa p_t-\nabla \lll \cdot \nabla p] \in L^2(0,T;L^2(\Om)).$ Thus, conbining \eqref{ineq:intermediate} and \eqref{eq:bootstrap_Kuzn}, leads to \eqref{Lin2ndorderNonlocal_Kuzn:Main_energy_est}, at first for the semi-discrete solution. The bound transfers to the continuous setting through standard weak compactness arguments, analogously to \cite[Proposition 3.1]{kaltenbacher2022parabolic} so we omit these details here. 
\end{proof}
\indent For the nonlinear analysis, we intend to use Agmon's inequality with the goal of imposing smallness of data in a less restrictive space; see \eqref{est_Agmon_p_Kuzn} below. Agmon inequality is given by 
\begin{equation}
	\begin{aligned}
		\|u\|_{L^\infty(\Om)} \leq C_{\textup{A}}\|u\|_{L^2(\Om)}^{1-n/4}\|u\|^{n/4}_{H^2(\Om)}
	\end{aligned}
\end{equation} 
for some arbitrary $u\in H^2(\Om)$ and $n \leq 3$; see~\cite[Lemma 13.2]{agmon2010lectures}. 
It is thus helpful to also have a uniform lower-order bound for the solution of \eqref{ibvp_2ndorder_lin_Kuzn}. Under assumptions of Proposition~\ref{Prop:WellP_Lin_2ndorder_nonlocal}, testing \eqref{ibvp_2ndorder_lin_Kuzn} with $\pepst$ and integrating over space and time yields, after usual manipulations, 
\begin{equation} \label{first_est_selfm_Kuzn}
	\begin{aligned}
		&\frac12 \left\{\|\sqrt{\aaa(t)}\pepst(s)\|^2_{L^2(\Om)}+c^2\| \nabla \peps(s)\|^2_{L^2(\Om)} \right\}\Big \vert_0^t+\cAthree \int_0^{t} \|(\frakKeps* \nabla \pepst)(s)\|^2_{\Ltwo} \ds\\
		\leq &\,\begin{multlined}[t] \frac12\|\aaa_t\|_{L^1(L^\infty(\Om))}\|\pepst\|^2_{L^\infty_t(L^2(\Om))}- \int_0^t( \nabla \lll \cdot \nabla \pepst, \pepst)\ds.   \end{multlined}
	\end{aligned}
\end{equation}
To bound the last term on the right, we integrate by parts in space and use that $\pepst\vert_{\partial \Omega}=0$: 
\begin{equation} \label{ineq:lower_order_1}
	\begin{aligned}
		\left|\int_0^t \int_{\Omega} (\nabla \lll  \cdot \nabla \pepst) \pepst \dxs \right| 
		=&\, \frac12 \left | \int_0^t \int_{\Omega} (- \D {\lll})  \pepst \pepst \dxs \right|\\
		\lesssim&\,  \|\D {\lll}\|_{L^1(L^\infty(\Om))}\|\pepst\|_{L^\infty_t(L^2(\Om))}^2.
	\end{aligned}
\end{equation}
Thus
\begin{equation}\label{uniform_lower_order_Kuzn}
	\begin{aligned}
		\frac12 \left\{\|\sqrt{\aaa(t)}\pepst(s)\|^2_{L^2(\Om)}+c^2\| \nabla \peps(s)\|^2_{L^2(\Om)} \right\}\Big \vert_0^t
		\leq  C(\Om)\,\Xi\|\pepst\|^2_{L_t^\infty(L^2(\Om))},
	\end{aligned}
\end{equation} 
where we recall that $\Xi$ is defined in \eqref{def_xi}. Utilizing the above estimates and the smallness of $\Xi$ according to condition \eqref{def_xi}, leads to the sought-after uniform-in-$\eps$ lower order estimate:
\begin{equation}\label{estimate:lower_order_Kuzn}
	 \|\pepst(t)\|^2_{L^2(\Om)} + \| \nabla \peps(t)\|^2_{L^2(\Om)} 
\lesssim \|\psi_1\|^2_{L^2(\Om)} + \| \nabla \psi_0\|^2_{L^2(\Om)}.
\end{equation}
We are now ready to state the uniform well-posedness result for the initial-boundary value problem~\eqref{ibvp_Kuzn_general} for the nonlinear Kuznetsov equation.
\begin{theorem}\label{Thm:Wellp_2ndorder_nonlocal_Kuzn}
		Let $\eps \in (0, \beps)$ and $k$, $\ell \in \R$. Furthermore, let $(\psi_0, \psi_1) \in \Honefour \times \Honefour$ be such that
	\begin{equation}
		\|\psi_0\|^2_{H^4(\Om)}+\|\psi_1\|^2_{H^4(\Om)} \leq r^2,
	\end{equation}
	where $r$ does not depend on $\eps$. 
	Let assumptions \eqref{assumption1}, \eqref{assumption2}, and \eqref{assumption3} on the kernel $\frakKeps$ hold.
	Then, there exist a 
	data size $r_0=r_0(r)>0$ and final time $T=T(r)>0$, both independent of $\eps$, such that if 
	\begin{equation}\label{initsmallWellp_2ndorder_nonlocal_Kuzn}
		\|\psi_0\|^2_{H^1(\Om)}+\|\psi_1\|^2_{L^2(\Om)} \leq r_0^2,
	\end{equation}
	then there is a unique solution $\peps$ in $X_\psi$ (where $X_\psi$ is defined in \eqref{solution_space_psi_Kuzn}) of \eqref{ibvp_Kuzn_general}, which satisfies the following estimate: 
	\begin{equation} \label{Nonlin:Main_energy_est_Kuzn}
		\begin{aligned}
			& \begin{multlined}[t] \|\peps\|^2_{X_\psi}+ \int_0^{T} \|(\frakKeps* \nabla \D \pepstt)(s)\|^2_{\Ltwo} \ds 
			\end{multlined}\\
			\leq&\, C_{\textup{nonlin}}\left(\,  \|\psi_0\|^2_{H^4(\Om)}+\|\psi_1\|^2_{H^3(\Om)}+\eta \,\|\frakKeps\|_{L^1(0,T)} ^2\|\D^2 \psi_1\|^2_{L^2(\Om)}\right),
		\end{aligned}
	\end{equation} 
	where $\eta = 0$ if $\frakKeps \in \{ 0, \eps\delta_0 \}$ and $\eta =1$ otherwise.
	Here, $C_{\textup{nonlin}}=C_{\textup{nonlin}}(\Omega,\ophi, T)$ 
	does not depend on the parameter $\eps$. 
\end{theorem}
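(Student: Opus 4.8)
The plan is to run a Banach fixed-point argument for the map $\TK\colon\phi\mapsto\peps$ defined through the linear problem \eqref{ibvp_2ndorder_lin_Kuzn}, on a ball that is closed in the strong topology of $X_\psi$ but endowed with the weaker energy metric $\|\cdot\|_{\textup{E}}$ of \eqref{energy_norm}. First fix $R>0$ by $R^2:=C_{\textup{lin}}(1+\cAone^2)\,r^2$, where $C_{\textup{lin}}$ dominates the constant in \eqref{Lin2ndorderNonlocal_Kuzn:Main_energy_est} for all $T$ up to a reference time $T_\ast$ (the constant there being obtained by absorption, it behaves benignly in $T$), and set $\ulal:=3/4$, $\olal:=5/4$. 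Define
\[
B:=\bigl\{\phi\in X_\psi:\ (\phi,\phi_t)\vert_{t=0}=(\psi_0,\psi_1),\ \|\phi\|_{X_\psi}\leq R,\ \ulal\leq 1+2\tk\phi_t\leq\olal\ \text{a.e.\ in }\Omega\times(0,T)\bigr\}.
\]
For $T\leq T_\ast$ small enough that $(T^{1/2}+2T)R\leq\Xi$, every $\phi\in B$ satisfies the smallness condition \eqref{def_xi} (since $\|\phi\|_{L^1(H^4)}\leq T^{1/2}\|\phi\|_{L^2(H^4)}$ and $\|\phi_t\|_{L^1(H^3)}+\|\phi_{tt}\|_{L^1(H^2)}\leq T\|\phi\|_{X_\psi}$) and hence all hypotheses of Proposition~\ref{Prop:WellP_Lin_2ndorder_nonlocal}; moreover $B\neq\emptyset$, since for $r_0$ small the affine interpolant $\phi(x,t)=\psi_0(x)+t\psi_1(x)$ lies in $B$, using Agmon's inequality and \eqref{initsmallWellp_2ndorder_nonlocal_Kuzn} to keep $2|\tk|\,\|\psi_1\|_{\Linf}\leq1/4$.

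To prove $\TK(B)\subseteq B$, take $\phi\in B$ and $\peps=\TK\phi\in X_\psi$; it inherits the initial data, and \eqref{Lin2ndorderNonlocal_Kuzn:Main_energy_est} together with $\|\psi_1\|_{\Hthree}\leq\|\psi_1\|_{H^4(\Om)}\leq r$, $\|\D^2\psi_1\|_{\Ltwo}\leq r$, and $\|\frakKeps\|_{L^1(0,T)}\leq\cAone$ gives $\|\peps\|_{X_\psi}^2\leq C_{\textup{lin}}(1+\cAone^2)r^2=R^2$. The nontrivial point is to recover $\ulal\leq1+2\tk\pepst\leq\olal$ with the \emph{same} constants. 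Here the low-order bound \eqref{estimate:lower_order_Kuzn}, valid for $\peps=\TK\phi$ because $\phi\in B$, gives $\|\pepst\|_{L^\infty(\Ltwo)}\lesssim\|\psi_1\|_{\Ltwo}+\|\nabla\psi_0\|_{\Ltwo}\lesssim r_0$ --- a bound that does \emph{not} involve the merely $O(R)$ high-order norm --- and interpolating it against $\|\pepst\|_{L^\infty(\Htwo)}\leq\|\peps\|_{X_\psi}\leq R$ by Agmon's inequality yields $\|\pepst\|_{L^\infty(\Linf)}\lesssim r_0^{\,1-n/4}R^{\,n/4}$, which is $\leq1/(8|\tk|)$ once $r_0=r_0(r)$ is chosen small. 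Hence $\peps\in B$.

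For the contraction, take $\phi^{(1)},\phi^{(2)}\in B$, set $\peps^{(i)}=\TK\phi^{(i)}$, $\bar\psi:=\peps^{(1)}-\peps^{(2)}$, $\bar\phi:=\phi^{(1)}-\phi^{(2)}$. Subtracting the two copies of \eqref{ibvp_2ndorder_lin_Kuzn:Eq}, $\bar\psi$ has zero initial data and solves $\aaa^{(1)}\bar\psi_{tt}-c^2\D\bar\psi-\frakKeps\Lconv\D\bar\psi_t+\nabla\lll^{(1)}\cdot\nabla\bar\psi_t=\bar F$ with $\aaa^{(1)}=1+2\tk\phi^{(1)}_t$, $\lll^{(1)}=2\ell\phi^{(1)}$, and $\bar F=-2\tk\,\bar\phi_t\,\peps^{(2)}_{tt}-2\ell\,\nabla\bar\phi\cdot\nabla\peps^{(2)}_t$. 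Testing with $\bar\psi_t$ over $(0,t)\times\Omega$: the memory term contributes $\geq\cAthree\int_0^t\|\frakKeps\Lconv\nabla\bar\psi_t\|_{\Ltwo}^2\ds\geq0$ by \eqref{assumption3}; the terms carrying $\aaa^{(1)}_t$ and $\D\lll^{(1)}$ are $\lesssim TR\,\|\bar\psi_t\|_{L^\infty_t(\Ltwo)}^2$ (since $\|\aaa^{(1)}_t\|_{L^1(\Linf)}+\|\D\lll^{(1)}\|_{L^1(\Linf)}\lesssim T\|\phi^{(1)}\|_{X_\psi}$) and are absorbed for $TR$ small; and since $\peps^{(2)}\in B$ forces $\|\peps^{(2)}_{tt}\|_{L^\infty(\Linf)}+\|\nabla\peps^{(2)}_t\|_{L^\infty(\Linf)}\lesssim\|\peps^{(2)}\|_{X_\psi}\leq R$ (via $\Htwo\hookrightarrow\Linf$), one gets $\|\bar F\|_{L^1(\Ltwo)}\lesssim TR\,\|\bar\phi\|_{\textup{E}}$. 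Taking the supremum in $t$, using $\aaa^{(1)}\geq\ulal$ and Young's inequality, yields $\|\bar\psi\|_{\textup{E}}\leq C\,TR\,\|\bar\phi\|_{\textup{E}}$, so $\TK$ is a $\|\cdot\|_{\textup{E}}$-contraction on $B$ once $T=T(r)$ is small enough that $C\,TR<1$.

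Finally, $B$ is complete for $\|\cdot\|_{\textup{E}}$: a $\|\cdot\|_{\textup{E}}$-Cauchy sequence in $B$ converges in $C([0,T];\Hone)\cap C^1([0,T];\Ltwo)$, while its $X_\psi$-boundedness yields a weak-$\ast$ convergent subsequence in $X_\psi$; the two limits coincide, and the $X_\psi$-bound, the a.e.\ nondegeneracy, and the initial conditions all pass to the limit, so the limit lies in $B$. Banach's theorem then gives a unique $\peps\in B$ with $\TK\peps=\peps$; inserting $\phi=\peps$ in \eqref{ibvp_2ndorder_lin_Kuzn} shows it solves \eqref{ibvp_Kuzn_general} and lies in $X_\psi$, and uniqueness extends to $X_\psi$ by the same difference-of-solutions energy estimate (now closed by Grönwall) within the class of solutions obeying \eqref{non-deg_phi}. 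Estimate \eqref{Nonlin:Main_energy_est_Kuzn} is then \eqref{Lin2ndorderNonlocal_Kuzn:Main_energy_est} applied with $\phi=\peps\in B$, giving $C_{\textup{nonlin}}=C_{\textup{lin}}(T)$. I expect the crux to be the propagation of the nondegeneracy constants under $\TK$: one must exploit that \eqref{estimate:lower_order_Kuzn} is genuinely independent of the high-order norm so that Agmon's inequality can trade a large-but-finite $\Htwo$ norm against a small $\Ltwo$ norm and keep $\|\pepst\|_{L^\infty(\Linf)}$ below the threshold set by $\ulal,\olal$; intertwined with this is the parameter hierarchy $R=R(r)$, then $T=T(r)$, then $r_0=r_0(r)$, which must be resolved in exactly that order to avoid a circular dependence. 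The two-norm structure (self-map in $X_\psi$, contraction in $\|\cdot\|_{\textup{E}}$) is otherwise routine once completeness of $B$ in the weak metric is observed.
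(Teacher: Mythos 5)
Your proposal is correct and follows essentially the same route as the paper's proof: a Banach fixed-point argument for $\TK$ on a ball bounded in $X_\psi$ (self-mapping via Proposition~\ref{Prop:WellP_Lin_2ndorder_nonlocal}, with nondegeneracy propagated by combining the $\Xi$-independent lower-order bound \eqref{estimate:lower_order_Kuzn} with Agmon interpolation and smallness of $r_0$), contraction and completeness in the energy norm \eqref{energy_norm}, and the parameter hierarchy $R(r)$, then $T(r)$, then $r_0(r)$. The only differences are cosmetic (choice of nondegeneracy constants, folding the $\Xi$-condition into the choice of $T$ rather than the ball definition, and exhibiting an affine interpolant instead of the linear solution to show the ball is nonempty).
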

\begin{proof}
	The proof follows using the Banach fixed-point theorem in the general spirit of~\cite[Theorem 6.1]{kaltenbacher2022parabolic} which considers $\eps$-uniform local-in-time analysis of the Kuznetsov equation. A similar idea was also used in the study of the uniform-in-$\eps$ Westervelt equation in pressure form~\cite[Theorem 3.1]{kaltenbacher2023limting}. 
	The mapping $\TK:\phi \mapsto \psi$ is introduced on 
	\begin{equation}\label{defB_Kuzn}
		\begin{aligned}
			\mathcal{B} = \begin{multlined}[t]\left\{ \vphantom{{4|\tk|}\|\phi_t\|_{L^\infty(L^\infty(\Om))}}\right. \phi \in X_\psi \, : \, (\phi,\phi_{t})|_{t=0}= (\psi_0, \psi_1), \\
				\ \|\phi\|_{L^1(H^4(\Omega))} +	\|\phi_{t}\|_{L^1(H^3(\Omega))} +	\|\phi_{tt}\|_{L^1(H^2(\Omega))} \leq \Xi, 
				\\
				\left.  
				{4|\tk|}\|\phi_t\|_{L^\infty(L^\infty(\Om))} \leq 1, \ \|\phi\|_{X_{\psi}} \leq R  \vphantom{X_\psi} \ \right\}.\end{multlined}
		\end{aligned}
	\end{equation}
	The radius $R>0$ large enough as well as $\Xi>0$ small enough will be determined below. 
	Note that the set $\mathcal{B} $ is non-empty as the solution of the linear problem with $\tk=\ell=0$ belongs to it if $R$ is sufficiently large.  The smoothness assumptions on $\aaa$, and $\lll$ follow by  $\phi$ being in $\calB$.
	Furthermore, since $ 4 |\tk|\|\phi_t\|_{L^\infty(L^\infty(\Om))} \leq 1$,  $\aaa$ does not degenerate:
	\[
	\begin{aligned}
		\frac12=	\underline{\aaa} \leq \aaa \leq \overline{\aaa}=\frac32.
	\end{aligned}
	\]
	Moreover $\phi\in X_\psi$ implies that $\phi\in X_\phi$, and the smallness condition on $T=T(\|\phi\|_{X_\phi})$ follows for $\phi \in \calB$ if we choose the final time $T$ relative to $R$ and $\cAthree$ according to the assumptions of the linear well-posedness theory given in Proposition~\ref{Prop:WellP_Lin_2ndorder_nonlocal}. \vspace*{2mm}
	
	\emph{$\bullet$ The self-mapping property.} Using the linear well-posedness theory from Proposition~\ref{Prop:WellP_Lin_2ndorder_nonlocal}, we ensure that the solution of the linear problem satisfies %\vanja{Do we wish to include $\eta$}
	\begin{equation} 
		\begin{aligned}
			\|\peps\|^2_{X_\psi}  \leq\,  C_{\textup{lin}}(T)\left( \|\psi_0\|^2_{H^4(\Om)}+\|\psi_1\|^2_{H^3(\Om)}+ \eta \|\frakKeps\|_{L^1(0,T)} ^2\|\D^2 \psi_1\|^2_{L^2(\Om)}
			\right).
		\end{aligned}
	\end{equation}
	 Provided that $R$ is chosen large enough so that 
	\[C_{\textup{lin}}(T) \left(\|\psi_0\|^2_{H^4(\Om)}+\|\psi_1\|^2_{H^3(\Om)}+ \eta \cAone^2\|\D^2 \psi_1\|^2_{L^2(\Om)} \right)\leq R^2,\] the solution $\psi$ satisfies the $R$ bound in \eqref{defB_Kuzn}. 
	Next, we have
	\[
	 \|\phi\|_{L^1(H^4(\Omega))} +	\|\phi_{t}\|_{L^1(H^3(\Omega))} +	\|\phi_{tt}\|_{L^1(H^2(\Omega))} \leq (T+\sqrt T) R, 
	\]
	thus, the condition on $ \|\phi\|_{L^1(H^4(\Omega))} +	\|\phi_{t}\|_{L^1(H^3(\Omega))} +	\|\phi_{tt}\|_{L^1(H^2(\Omega))} \leq \Xi$ will be fulfilled  if we choose the final time $T>0$ small enough so that  \[(T+\sqrt T) R \leq \Xi.
	\]\\
	\indent To finish proving the self-mapping property, it remains to show that the bound $${4|\tk|}\|\psi_t\|_{L^\infty(L^\infty(\Om))} \leq 1$$ in \eqref{defB_Kuzn} holds. Using Agmon's interpolation inequality, we write
	\begin{equation}\label{unif_bound_energynorm_Kuzn}
		\begin{aligned}
			\|\pepst(t)\|_{L^\infty(\Om)} \leq C_{\textup{A}}\|\pepst(t)\|_{L^2(\Om)}^{1-n/4}\|\pepst(t)\|^{n/4}_{H^2(\Om)}.
		\end{aligned}
	\end{equation}
		We combine this with a uniform lower-order estimate in $\eps$ for the linearized problem established in \eqref{uniform_lower_order_Kuzn}: 
	\begin{equation} \label{energy_bound_Kuzn}
		\begin{aligned}
			\|\peps\|^2_{\textup{E}}	\lesssim\|\psi_0\|^2_{H^1(\Om)}+\|\psi_1\|^2_{L^2(\Om)}.
		\end{aligned}
	\end{equation}
	 This approach yields
	\begin{align} \label{est_Agmon_p_Kuzn}
		\|\pepst\|_{L^\infty(L^\infty(\Om))} \leq C_{\textup{A}}\left\{C_\textup{lower}(\Xi)(\|\psi_0\|^2_{H^1(\Om)}+\|\psi_1\|^2_{L^2(\Om)})\right\}^{1/2-{n}/8}R^{{n}/4},
	\end{align}
	where $C_\textup{lower}(\Xi)$ is the hidden constant in \eqref{estimate:lower_order_Kuzn}.
	We choose $r_0>0$ in \eqref{initsmallWellp_2ndorder_nonlocal_Kuzn} small enough, so that
	\[
	4|\tk| C_{\textup{A}}C_\textup{lower}(\Xi)^{1/2-{n}/8} r_0^{1-{n}/4}\,R^{{n}/4} \leq 1,
	\]
	which via \eqref{est_Agmon_p_Kuzn} implies $4|\tk| \|\pepst\|_{L^\infty(L^\infty(\Om))} \leq 1$. Altogether, we conclude that $\TK(\calB) \subset \calB$.\\
		
	\emph{$\bullet$ Contractivity.} We next prove that $\mathcal{T}$ is strictly contractive with respect to the energy norm \eqref{energy_norm}. Take $\phi^{(1)}$ and $\phi^{(2)}$ in $\mathcal{B}$, and denote their difference by $\overline{\phi}= \phi^{(1)} -\phi^{(2)} $. Let $\psi^{\eps,(1)}=\TK (\phi^{(1)})$ and $\psi^{\eps,(2)}=\TK (\phi^{(2)})$. Their difference $\opsi^\eps=\psi^{\eps,(1)} -\psi^{\eps,(2)} \in \mathcal{B}$ then solves 
	\begin{equation} \label{2ndorder_nonlocal_diff_contr_Kuzn}
		\begin{aligned}
			&(1+ 2 \tk \phi_t^{(1)})\opsi^\eps_{tt}-c^2 \Delta \opsi^\eps-  \frakKeps* \D\opsi^\eps_{t}+2 \ell \nabla \phi^{(1)} \cdot \nabla \opsi^\eps_t\\
			=&\, \begin{multlined}[t] -2 \tk \ophi_t \psi^{\eps,(2)} _{tt}-  2\ell \nabla \ophi \cdot \nabla \psi^{\eps,(2)}_t, \end{multlined}
		\end{aligned}
	\end{equation}
	with zero initial and boundary data. We will next test this equation with $\opsi_t^\eps$ and integrate over space and time. As we are in a similar setting to when deriving the lower-order bound in  \eqref{uniform_lower_order_Kuzn} with now
	$
	\aaa=1+2 \tk \phi^{(1)}_t$, 
	and a non-zero source term, we only discuss how to estimate the terms resulting from the  right-hand side contributions.
	The resulting $\tk$ term on the right can be estimated as follows: 
	\begin{equation}
		\begin{aligned}
			\left |-2 \tk \int_0^t \intO \ophi_t \psi_{tt}^{\eps,(2)}  \opsi^\eps_t \dxs \right| 
			\lesssim\,  \|\psi _{tt}^{\eps,(2)}\|_{L^1(L^\infty(\Om))}(\|\opsi^\eps_t\|_{L^\infty_t(L^2(\Om))}^2+\|\ophi_t\|_{L^\infty_t(L^2(\Om))}^2). 
		\end{aligned}
	\end{equation} 
	We can estimate the $\ell$ term similarly:
	\begin{equation}
		\begin{aligned}
			\left|-2\ell\int_0^t \int_{\Omega} \nabla \ophi \cdot \nabla \psi^{\eps,(2)}_t \, \opsi_t^\eps \dxs \right| \lesssim \|\nabla \psi^{\eps,(2)}_t\|_{L^1(L^\infty(\Om))} (\|\opsi_t^\eps\|^2_{L^\infty_t(L^2(\Om))}+\|\nabla \ophi\|_{L^\infty_t(L^2(\Om))}^2).
		\end{aligned}
	\end{equation}
	Thus using that $\|\psi^{\eps,(i)}\|_{X_{\psi}}$, $\|\phi^{\eps,(i)}\|_{X_{\psi}} \leq R$ (with $i=1,2$), we obtain
	\begin{equation}
		\begin{aligned}
			\|\opsi^\eps\|^2_{\textup{E}} \leq %m
			C(\Omega) {{T}}R \left(\|\opsi^\eps\|^2_{\textup{E}} + \|\ophi\|^2_{\textup{E}} \right ).
		\end{aligned}
	\end{equation}
	Therefore by reducing ${{T}}$ if needed (independently of $\eps$), 
	we can absorb the energy term $\|\opsi^\eps\|^2_{\textup{E}}$ on the right-hand-side of the inequality and ensure that the mapping $\mathcal{T}$ is strictly contractive with respect to the energy norm \eqref{energy_norm}. The arguments showing that $\calB$ is closed with respect to this norm are analogous to those of \cite[Theorem 4.1]{kaltenbacher2022parabolic}. By the Banach fixed-point theorem, we therefore obtain a unique fixed-point $\peps=\mathcal{T}(\peps)$ in $\calB$, which solves the nonlinear problem. 
\end{proof}
\subsection*{Verifying the assumptions for relevant classes of kernels} 
In Section~\ref{Sec:AcousticModeling}, we saw that Abel and Mittag-Leffler kernels are of particular interest in the context of nonlinear acoustic modeling. We next discuss to which extent they satisfy the assumptions imposed in the preceding analysis. The verification of assumptions \eqref{assumption1} and \eqref{assumption3} on these kernels (with the choice $\eps =\tau$, $\delta>0$ fixed for the Mittag-Leffler kernels) follows from \cite[Section 5]{kaltenbacher2023limting}; we compile them for convenience of the reader in Table~\ref{tab:ker_assu_veri}. \\
\indent The verification in~\cite[Section 5]{kaltenbacher2023limting} of \eqref{assumption3} uses two different techniques: Fourier analysis and the theory of Volterra integrodifferential equations for completely monotone kernels. As per the discussion of \cite[Section 5.2]{kaltenbacher2023limting}, assumption \eqref{assumption3} is verified for the Abel kernel~\eqref{def_fractional_kernel}, and by fixing $\delta = 1$ for the Mittag-Leffler kernels~\eqref{ML_kernels} when $a\leq b$ (which covers the exponential, GFE II, and GFE kernels). When $a>b$ (e.g., for kernels GFE I and GFE III), we need the ratio $\dfrac{\rt}{\eps}$ to be a constant.
\subsubsection*{Influence of the sound diffusivity $\delta>0$ fixed ($\eps =\tau$) on the coercivity constant $\cAthree$}\label{paragaraph:delta_discussion}
As mentioned, the verification of \eqref{assumption3} in \cite[Section 5.2]{kaltenbacher2023limting} for the Mittag-Leffler kernels was performed by fixing $\delta = 1$. Let $y\in L^2(0, T; \Ltwo)$ and $t\in(0,T)$. Then, by \cite[Section 5.2]{kaltenbacher2023limting}, we have for those kernels
			 \begin{equation}
				\begin{aligned}
					\int_0^{t} \intO \left(\frac1\delta\frakKeps* y \right)(s) \,y(s)\dxs\geq \check{c}
					\int_0^{t} \|(\frac1\delta\frakKeps* y)(s)\|^2_{\Ltwo} \ds, 
				\end{aligned}
			\end{equation}
			where $\check{c}>0$ depends neither on $\varepsilon \in(0,\beps)$ nor on $\delta$. Thus, by straightforward manipulations, we can prove that \eqref{assumption3} is verified for $\frakKeps$ with $\cAthree=\dfrac{\check{c}}\delta$, and smaller values of $\delta$ will thus lead to larger values of $\cAthree$.

\indent It remains to discuss the verification of~\eqref{assumption2}.
Assumption \eqref{assumption2} can be checked by means of~\cite[Lemma 5.1]{kaltenbacher2023limting}. Given a Hilbert space $H$, a function  $w \in W^{1,1}(0,T; H)$, and a memory kernel $\frakK \in L^1(0,T)$ verifying that for all $t_0>0$, $\frakK\in W^{1,1}(t_0,T)$, $\frakK\geq0$ on $(0,T)$, and $\frakK'\vert_{[t_0,T]}\leq0$ a.e., ~\cite[Lemma 5.1]{kaltenbacher2023limting} ensures that
\begin{equation}\label{eq:assumption_eq}
	\begin{aligned}
		\int_0^T ( {w}(t), \frakK* {w}_t(t))_{H}\dt \geq&\, 
		\frac12(\frakK* \|{w}\|_H^2)(T)-\frac12\int_0^T\frakK(t)\dt\, \|{w}(0)\|_H^2 
		\\\geq&\, -\frac12 \|\frakK\|_{L^1(0,T)} \|{w}(0)\|_H^2.
	\end{aligned}
\end{equation}
\\ \indent Thus, the kernel assumptions in \cite[Lemma 5.1]{kaltenbacher2023limting} are satisfied up to an $\eps$-dependent final time for the Mittag-Leffler kernels when $a>b$ (with the choice $\eps =\tau$, $\delta>0$ fixed) in the sense that
\begin{equation} \label{A2_tau_time}
	\int_0^t\bigl((\frakKeps\Lconv y_t)(s), y(s)\bigr)_{L^2(\Omega)}\ds\geq - \frac12 \|\frakKeps\|_{L^1(0,t)} \|y(0)\|^2_{L^2(\Omega)}, \quad
	y\in W^{1,1}(0,t;L^2(\Omega))
\end{equation} holds for all \[0 \leq t \leq t_f = t_f(\eps).\] The $\eps$-dependence in the case $a>b$ is owed to the fact that positivity and decreasing monotonicity can only be verified up to such a final time. This dependency is marked by the symbol $\sometimes$ in Table~\ref{tab:ker_assu_veri}.
\\
\indent As for the Abel and Mittag-Leffler kernel with $a\leq b$, they are completely monotone and thus verify the assumptions of \eqref{eq:assumption_eq} for all final times $T \in(0,\infty)$. The constant $\cAtwo$ in \eqref{assumption2} can then be estimated by $\frac12\|\frakKeps\|_{L^1(0,\infty)}$ which is indeed independent of $\eps$; see~\cite[Section 5.1]{kaltenbacher2023limting}. 

\subsubsection*{Influence of a variable sound diffusivity $\eps = \delta \in (0,\beps)$ (with $\tau>0$ fixed) in the Mittag-Leffler kernels}
For the Mittag-Leffler kernels~\eqref{ML_kernels} (including the exponential kernel \eqref{exponential_kernel}), we had a choice between setting $\eps = \delta$ (with $\tau$ fixed) or $\eps=\tau$ (with $\delta$ fixed). Note that the case of $\eps = \delta$ ($\tau$ fixed) is somewhat simpler to discuss as the small parameter $\eps$ appears as a multiplication factor in the kernels. Thus, considering a neighbourhod $(0,\beps)$, it is clear that we can use the upper bound $\beps$ to bound the assumption constants uniformly in $\eps$. \\\indent
To showcase the statement above, consider a generic kernel $\frakK$ (independent of $\eps$) verifying assumptions \eqref{assumption1}, \eqref{assumption2}, and \eqref{assumption3} with constants $\check{c}_1$, $\check{c}_2$, and $\check{c}_3$,
respectively. Then, straightforward manipulations show that $\frakKeps = \eps \frakK$ also verifies \eqref{assumption1}, \eqref{assumption2}, and \eqref{assumption3} with constants $\cAone = \beps \check{c}_1$, $\cAtwo = \dfrac1\beps \check{c}_2$, and $\cAthree = \dfrac1\beps \check{c}_3$, respectively.
\\ \indent
Furthermore, by fixing $\tau$, note that the final time of verifying~\eqref{assumption2} no longer depends on $\eps$ for the Mittag-Leffler kernels with $a>b$. \\
\begin{center}
	\begin{tabular}[h]{|c|l|c|c|c|c|}
		\hline
		&&&&&\\[-3mm]
		Choice of $\eps$ &Examples of relevant kernels& order of $a,\,b$& \eqref{assumption1} &\eqref{assumption2}& \eqref{assumption3}\\[5pt]
		\hline\hline
		
		$\eps = \delta$ & $\eps  {\rt^{-\alpha}} g_{\alpha}$ & \_	& \cmark & {\cmark}  &\cmark  \\[3mm]
		
		"&$\eps \left(\frac{\rt}{\tau}\right)^{a-b}\frac{1}{\tau^b}t^{b-1}E_{a,b}\left(-\left(\frac{t}{\tau}\right)^a\right) $	& $a \leq b$	& \cmark  & \cmark&\cmark \\[3mm]
		
		"&$\eps \left(\frac{\rt}{\tau}\right)^{a-b}\frac{1}{\tau^b}t^{b-1}E_{a,b}\left(-\left(\frac{t}{\tau}\right)^a\right) $	& $a > b $	& \cmark  & \cmark&\cmark\\[2mm]
		\hline
		&&&&&\\[-3mm]
		$\eps = \tau$ &$\delta \left(\frac{\rt}{\eps}\right)^{a-b}\frac{1}{\eps^b}t^{b-1}E_{a,b}\left(-\left(\frac{t}{\eps}\right)^a\right)$	& $a \leq b$	& \cmark  & \cmark&\cmark \\[4mm]
		
		"&$\delta \left(\frac{\rt}{\eps}\right)^{a-b}\frac{1}{\eps^b}t^{b-1}E_{a,b}\left(-\left(\frac{t}{\eps}\right)^a\right)$	&$a>b$  & \cmark & \sometimes &\cmark \\[2mm]
		\hline
	\end{tabular}
	~\\[2mm]
	\captionof{table}{Kernels for  flux laws discussed in Section~\ref{Sec:AcousticModeling} and the assumptions they satisfy.} \label{tab:ker_assu_veri}  
\end{center}

\subsection{Limiting behavior of the Kuznetsov equation for relevant classes of kernels}\label{Sec:limits_Kuzn}
We next wish to determine the limiting behavior of solutions to \eqref{ibvp_Kuzn_general} as $\eps \searrow 0$. 
The key result towards this is proving the continuity of the solution with respect to the memory kernel. 
\begin{theorem}
	\label{Thm:Limit}
	Let $\eps_1, \eps_2 \in (0, \beps)$. Under the assumptions of Theorem~\ref{Thm:Wellp_2ndorder_nonlocal_Kuzn},  for sufficiently small $T$ (independent of $\eps$), the following estimate holds:
	\begin{equation} \label{continuity_kernels}
		\|\psi^{\eps_1}-\psi^{\eps_2}\|_{\textup{E}}\lesssim \|(\frakK_{\eps_1}-\frakK_{\eps_2})\Lconv1\|_{L^1(0,T)},
	\end{equation}
	where $\|\cdot\|_{\textup{E}}$ is defined in~\eqref{energy_norm}.
\end{theorem}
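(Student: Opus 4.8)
The natural approach is an energy estimate for the difference $w := \psi^{\eps_1}-\psi^{\eps_2}$, which vanishes together with $w_t$ at $t=0$ and vanishes on $\partial\Omega$. Subtracting the two instances of \eqref{ibvp_Kuzn_general} and adding and subtracting $(1+2\tk\psi^{\eps_1}_t)\psi^{\eps_2}_{tt}$ together with the mixed gradient products, one finds that $w$ solves
\[
(1+2\tk\psi^{\eps_1}_t)w_{tt}-c^2\Delta w-\Delta\bigl(\frakK_{\eps_1}\Lconv w_t\bigr)+2\ell\,\nabla\psi^{\eps_1}\cdot\nabla w_t=R_1+R_2+R_3,
\]
with $R_1=-2\tk\,w_t\psi^{\eps_2}_{tt}$, $\ R_2=\Delta\bigl((\frakK_{\eps_1}-\frakK_{\eps_2})\Lconv\psi^{\eps_2}_t\bigr)$, $\ R_3=-2\ell\,\nabla w\cdot\nabla\psi^{\eps_2}_t$. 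The plan is to test this identity with $w_t$ (on the Galerkin level, passing to the limit at the end), integrate over $\Omega\times(0,t)$, and absorb every right-hand contribution using the smallness built into the fixed-point ball $\calB$ from \eqref{defB_Kuzn}.

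The left-hand side is handled exactly as in the derivation of \eqref{uniform_lower_order_Kuzn}: the inertial term gives $\tfrac12\nLtwo{\sqrt{1+2\tk\psi^{\eps_1}_t}\,w_t(t)}^2-\tk\inttO\psi^{\eps_1}_{tt}\,w_t^2\dxs$, the Laplacian term gives $\tfrac{c^2}{2}\nLtwo{\nabla w(t)}^2$, and the quadratic gradient term gives $-\ell\inttO\Delta\psi^{\eps_1}\,w_t^2\dxs$ after an integration by parts in space. The decisive point is that the memory term is \emph{nonnegative}: since $\nabla(\frakK_{\eps_1}\Lconv w_t)=\frakK_{\eps_1}\Lconv\nabla w_t$, assumption \eqref{assumption3} applied with $y=\nabla w_t\in L^2(0,T;\Ltwo)$ gives $-\inttO\Delta(\frakK_{\eps_1}\Lconv w_t)\,w_t\dxs\ge\cAthree\int_0^t\nLtwo{\frakK_{\eps_1}\Lconv\nabla w_t}^2\ds\ge0$ (the case $\frakK_{\eps_1}=\eps_1\delta_0$ being trivially nonnegative). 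On $\calB$ one has the nondegeneracy $\tfrac12\le1+2\tk\psi^{\eps_1}_t\le\tfrac32$ together with $\|\psi^{\eps_i}\|_{L^1(H^4(\Om))},\|\psi^{\eps_i}_t\|_{L^1(H^3(\Om))},\|\psi^{\eps_i}_{tt}\|_{L^1(H^2(\Om))}\le\Xi$; combined with $H^2(\Om)\hookrightarrow L^\infty(\Om)$ and $H^3(\Om)\hookrightarrow W^{1,\infty}(\Om)$, this bounds each of the routine integrals coming from $\tk\int\psi^{\eps_1}_{tt}w_t^2$, $\ell\int\Delta\psi^{\eps_1}w_t^2$, $\int R_1 w_t$ and $\int R_3 w_t$ by $C\,\Xi\bigl(\nLinftLtwo{w_t}^2+\nLinftLtwo{\nabla w}^2\bigr)$.

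The delicate term is $\inttO R_2\,w_t\dxs=\inttO\bigl((\frakK_{\eps_1}-\frakK_{\eps_2})\Lconv\Delta\psi^{\eps_2}_t\bigr)w_t\dxs$, where one must produce the antiderivative norm $\|(\frakK_{\eps_1}-\frakK_{\eps_2})\Lconv1\|_{L^1(0,T)}$ and not the larger $\|\frakK_{\eps_1}-\frakK_{\eps_2}\|_{L^1(0,T)}$. To this end I would write $\psi^{\eps_2}_t(s)=\psi_1+(1\Lconv\psi^{\eps_2}_{tt})(s)$ and use associativity of the Laplace convolution to split, with $h:=\frakK_{\eps_1}-\frakK_{\eps_2}$,
\[
h\Lconv\Delta\psi^{\eps_2}_t=(h\Lconv1)\,\Delta\psi_1+(h\Lconv1)\Lconv\Delta\psi^{\eps_2}_{tt}.
\]
Then H\"older in time and Young's convolution inequality give
\[
\Bigl|\inttO R_2\,w_t\dxs\Bigr|\lesssim\|h\Lconv1\|_{L^1(0,T)}\bigl(\|\psi_1\|_{H^2(\Om)}+\|\psi^{\eps_2}_{tt}\|_{L^1(H^2(\Om))}\bigr)\nLinftLtwo{w_t},
\]
which, since $\psi_1\in\Honefour$ and $\|\psi^{\eps_2}_{tt}\|_{L^1(H^2(\Om))}\le\Xi$, is $\lesssim\|h\Lconv1\|_{L^1(0,T)}\nLinftLtwo{w_t}$; a weighted Young inequality splits this into $\tfrac18\nLinftLtwo{w_t}^2$ plus a constant multiple of $\|h\Lconv1\|_{L^1(0,T)}^2$.

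Collecting all bounds, taking the supremum over $t\in(0,T)$ on the left, and using $\nLtwo{w(t)}\le T\nLinftLtwo{w_t}$ (valid because $w(0)=0$), one arrives at an inequality of the form $\|w\|_{\textup{E}}^2\le C(\Xi+T)\|w\|_{\textup{E}}^2+C\|(\frakK_{\eps_1}-\frakK_{\eps_2})\Lconv1\|_{L^1(0,T)}^2$; shrinking $\Xi$ and $T$ (as is anyway required for the fixed-point argument) absorbs the first term and, after taking square roots, gives \eqref{continuity_kernels}. The bound passes from the Galerkin approximations to the continuous solutions by weak lower semicontinuity of the norms. I expect the only genuine obstacle to be the treatment of $R_2$: the rewriting above is precisely what transfers the entire $\eps$-dependence onto $\|(\frakK_{\eps_1}-\frakK_{\eps_2})\Lconv1\|_{L^1(0,T)}$, trading the convolution against $\psi^{\eps_2}_t$ for one against the datum $\psi_1$ and the quantity $\psi^{\eps_2}_{tt}$, which is uniformly small in $\eps$ on $\calB$.
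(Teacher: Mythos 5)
Your proposal is correct and follows essentially the same route as the paper: you test the difference equation with $w_t$, use the nonnegativity of the $\frakK_{\eps_1}$ memory term via \eqref{assumption3}, bound the nonlinear remainders through the ball $\calB$, and, crucially, rewrite $(\frakK_{\eps_1}-\frakK_{\eps_2})\Lconv\Delta\psi^{\eps_2}_t=\bigl((\frakK_{\eps_1}-\frakK_{\eps_2})\Lconv 1\bigr)\Delta\psi_1+\bigl((\frakK_{\eps_1}-\frakK_{\eps_2})\Lconv 1\bigr)\Lconv\Delta\psi^{\eps_2}_{tt}$ before applying Young's convolution inequality, exactly as in \eqref{psiepsdiff}, then absorb with small $T$. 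The only (harmless) deviation is that you keep $2\ell\,\nabla\psi^{\eps_1}\cdot\nabla w_t$ on the left and integrate by parts to get $-\ell\int\Delta\psi^{\eps_1}w_t^2$, which is in fact a slightly cleaner bookkeeping of that term than the paper's.
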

\begin{proof}
	We note that the difference $\opsi=\psi^{\eps_1}-\psi^{\eps_2}$  solves the equation
	\begin{equation}
		\begin{aligned}\label{eq:equation_frakeps_diff_Kuzn}
			&(1+ 2\tk \psi_t^{\eps_1} )\opsi_{tt}-c^2 \D \opsi-  \frakK_{\eps_1}* \D\opsi_{t}\\
			=&\, \begin{multlined}[t] -2\tk \opsi_t \psi^{\eps_2} _{tt}- 2\ell \left( \nabla \psi^{\eps_1} \cdot \nabla \opsi_t +\nabla \opsi \cdot \nabla \psi^{\eps_2}_t \right)+(\frakK_{\eps_1}-\frakK_{\eps_2})* \Delta \psi_t^{\eps_2}
			\end{multlined}
		\end{aligned}
	\end{equation}
	with zero boundary and initial conditions. We can test \eqref{eq:equation_frakeps_diff_Kuzn} by $\opsi_{t}$ and proceed similarly to the the derivation of the lower-order estimate in \eqref{uniform_lower_order_Kuzn}. We therefore mainly discuss the contribution of the right-hand side in \eqref{eq:equation_frakeps_diff_Kuzn}.
	To this end, the resulting $\tk$ term after testing can be estimated as follows:
	\begin{equation}
		\begin{aligned}
			\left |-2 \tk \int_0^t \intO \opsi_t^2 \psi_{tt}^{\eps_2} \dxs \right| 
			\lesssim\,  \|\psi _{tt}^{\eps_2}\|_{L^1(L^\infty(\Om))}\|\opsi_t\|_{L^\infty_t(L^2(\Om))}^2. 
		\end{aligned}
	\end{equation} 
	We can estimate the $\ell$ term similarly:
	\begin{equation}
		\begin{multlined}
			\left|-2\ell\int_0^t \int_{\Omega}\left( \nabla \psi^{\eps_1} \cdot \nabla \opsi_t +\nabla \opsi \cdot \nabla \psi^{\eps_2}_t \right) \opsi_t \dxs \right| \lesssim \|\nabla \psi^{\eps_1}_t\|_{L^1(L^\infty(\Om))} \|\nabla \opsi_t^\eps\|^2_{L^\infty_t(L^2(\Om))} \\+ \|\nabla \psi^{\eps_2}_t\|_{L^1(L^\infty(\Om))} \left(\|\opsi_t\|^2_{L^\infty_t(L^2(\Om))}+\|\nabla \opsi\|_{L^\infty_t(L^2(\Om))}^2\right).
		\end{multlined}
	\end{equation}
	The $(\frakK_{\eps_1}-\frakK_{\eps_2})$ term we first rewrite as 
		\begin{equation}\label{psiepsdiff}
	\begin{aligned}
	&\int_0^t \left((\frakK_{\eps_1}-\frakK_{\eps_2})* \Delta \psi_t^{\eps_2}, \opsi_t\right)_{L^2(\Om)}\ds \\
	=&\, \int_0^t \big((1*(\frakK_{\eps_1}-\frakK_{\eps_2})*\Delta \psi_{tt}^{\eps_2})+(1*(\frakK_{\eps_1}-\frakK_{\eps_2}))\Delta \psi_1, \opsi_t(s)\big)_{L^2(\Om)}\ds .
	\end{aligned}
	\end{equation}
	Then it can be controlled using Young's convolution inequality as follows: 
	\begin{equation}\label{psiepsdiff}
		\begin{aligned}
			&\left |\int_0^t \left((\frakK_{\eps_1}-\frakK_{\eps_2})* \Delta \psi_t^{\eps_2}, \opsi_t\right)_{L^2}\ds \right|			
			\\ 
			\leq&\,\begin{multlined}[t]  \|(\frakK_{\eps_1}-\frakK_{\eps_2})*1\|_{L^1(0,T)} \left\{ \|\Delta \psi_{tt}^{\eps_2}\|_{L_t^1( L^2(\Om))}+
				\|\Delta \psi_1\|_{L^2(\Om)}\right\} \|\opsi_t\|_{L_t^\infty( L^2(\Om))}. \end{multlined}
			\\ \leq&\,\begin{multlined}[t]  
				\|\Delta \psi_1\|_{L^2(\Om)}\left( \frac1{4\gamma}\|(\frakK_{\eps_1}-\frakK_{\eps_2})*1\|_{L^1(0,T)}^2 + \gamma \|\opsi_t\|_{L_t^\infty( L^2(\Om))}^2\right)\\
				+\frac12 \|\Delta \psi_{tt}^{\eps_2}\|_{L_t^1( L^2(\Om))}\left( \|(\frakK_{\eps_1}-\frakK_{\eps_2})*1\|_{L^1(0,T)}^2 + \|\opsi_t\|_{L_t^\infty( L^2(\Om))}^2\right) \end{multlined}
		\end{aligned}
	\end{equation}
	for an arbitrary real number $\gamma>0$. In the proof of Theorem~\ref{Thm:Wellp_2ndorder_nonlocal_Kuzn} we have shown that $\psi^{\eps_1}$, $\psi^{\eps_2}$ 
	$\in \calB$, with $\calB$ defined in \eqref{defB_Kuzn}, which crucially implies uniform boundedness of $\|\Delta \psi_{tt}^{\eps_2}\|_{L_t^1( L^2(\Om))}$. Therefore, we can use the properties of $\mathcal{B}$ to derive the following estimate:
	\begin{equation} 
		\begin{aligned}
			&\frac12 \underline{\aaa}\sup_{\sigma\in(0,t)}\nLtwo{\opsi_t(\sigma)}^2+\frac{c^2}{2}\sup_{\sigma\in(0,t)}\nLtwo{\nabla \opsi(\sigma)}^2\\
			\lesssim&\,\begin{multlined}[t] C(\Omega)(TR+\gamma \|\D \psi_1\|_{L^2(\Om)})\left(\|\opsi_t\|_{L_t^\infty(L^2(\Om))}^2 
				+ \|\nabla \opsi\|_{L_t^\infty( L^2(\Om))}^2 \right)\\+ (TR+\frac1{4\gamma}\|\D \psi_1\|_{L^2(\Om)}) \|(\frakK_{\eps_1}-\frakK_{\eps_2})*1\|_{L^1(0,T)}^2. \end{multlined}
		\end{aligned}
	\end{equation}
By reducing the final time $T$ and $\gamma$ if needed (independently of $\eps$), we can absorb the energy term on the right-hand-side of the inequality which concludes the proof.
%	We are then left with
%	\begin{equation}
%		\|\psi^{\eps_1}-\psi^{\eps_2}\|^2_{\textup{E}}\leq C^2  \|(\frakK_{\eps_1}-\frakK_{\eps_2})*1\|^2_{L^1(0,T)},
%	\end{equation}
%from which the statement follows.	
\end{proof}
 Note that using Theorem~\ref{Thm:Limit} to compute the limiting behavior of $\peps$ relies on the continuity of the kernel $\frakKeps$ with respect to the parameter $\eps$ in a weaker norm compared to $\|\cdot\|_{L^1(0,T)}$. This allows us to consider $\delta_0$ as a possible limit as well as to establish rates of convergence for the Mittag-Leffler kernels by exploiting their completely monotone properties; see, e.g., Proposition~\ref{Prop:Limit_a<=b} for the usefulness of this result in obtaining a convergence rate.
 
 The convergence rate of the solutions will depend on the form of the kernel $\frakKeps$ and its specific dependence on $\varepsilon$. We therefore treat the vanishing sound diffusivity and relaxation time limits separately.

\subsubsection{The vanishing sound diffusivity limit}
We first discuss the setting $\frakKeps=\eps \frakK$. Recall that  
two important examples
of this class of kernels (up to a constant) 
are
%\begin{equation}
%	\frakKeps=\eps   g_{\alpha}, \Mostafa{ and } 
%\Barbara{\quad \Mostafa{\frakKeps=\eps} \left(\frac{\rt}{\tau}\right)^{a-b}\frac{1}{\tau^b}t^{b-1}E_{a,b}\left(-\left(\frac{t}{\tau}\right)^a\right) }
%\end{equation}
\begin{equation}
	\frakKeps= \eps \frakK \quad \text{with } \ 
	\frakK(t)= \left\{
	\begin{aligned}
		& 	\rt^{-\alpha} g_{\alpha}(t)\\
		& \textrm {or} \\[-2mm]
		& \left(\frac{\rt}{\tau}\right)^{a-b}\frac{1}{\tau^b}t^{b-1}E_{a,b}\left(-\left(\frac{t}{\tau}\right)^a\right) \ \textrm{with $\tau$ fixed}
	\end{aligned}
	\right.
\end{equation}
\noindent where the Abel kernel $g_\alpha$ is defined in \eqref{def_galpha} for $\alpha \in (0,1)$ and $g_0= \delta_0$. The Mittag-Leffler kernel with $0< a,b\leq1$ is defined in~\eqref{ML_kernels}. Here $\eps$ holds the place of the sound diffusivity $\delta$. %This framework extends in a straightforward manner to studying the vanishing diffusivity limit of GFE-type kernels by considering $\frakKeps=\eps \frakK_\tau$, where $\frakK_\tau$ is one of the kernels described by \eqref{ML_kernels}.

\begin{corollary}\label{Corollary:Limit_epsK}
	Under the assumptions of Theorems~\ref{Thm:Wellp_2ndorder_nonlocal_Kuzn} and~\ref{Thm:Limit} with the kernel
	\[
	\frakKeps = \eps \frakK, \quad \eps \in (0, \beps)
	\] 
	satisfying assumptions \eqref{assumption1}, \eqref{assumption2}, and \eqref{assumption3}, the family of solutions $\{\peps\}_{\eps \in (0, \beps)}$ of \eqref{ibvp_Kuzn_general}  converges in the energy norm to the solution $\psi$ of the inviscid Kuznetsov equation 
	\begin{equation}\label{ibvp_West_limit}
		\left \{	\begin{aligned} 
			&(1+2\tk\psi_t)\psi_{tt}-c^2 \Delta \psi + 2\ell\, \nabla \psi\cdot\nabla \psi_t  = 0 \quad  &&\text{in } \Omega \times (0,T), \\
			&\psi =0 \quad  &&\text{on } \partial \Omega \times (0,T),\\
			&(\psi, \psi_t)=(\psi_0, \psi_1), \quad  &&\text{in }  \Omega \times \{0\},
		\end{aligned} \right.
	\end{equation}
	at a linear rate
	\begin{equation}
		\|\peps-\psi\|_{\textup{E}}\lesssim \eps
		\quad \mbox{ as } \ \eps  \searrow 0.
	\end{equation}
\end{corollary}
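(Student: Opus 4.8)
The plan is to identify the inviscid Kuznetsov equation \eqref{ibvp_West_limit} with the member of the family \eqref{ibvp_Kuzn_general} corresponding to the zero kernel, and then to read off the rate from the continuity estimate of Theorem~\ref{Thm:Limit}. First I would note that the zero kernel $\frakK_0\equiv 0$ trivially satisfies \eqref{assumption1}, \eqref{assumption2} and \eqref{assumption3} (one may take $\cAone=\cAtwo=0$ and $\cAthree$ any positive number), and that the data $(\psi_0,\psi_1)\in\Honefour\times\Honefour$ are admissible; hence Theorem~\ref{Thm:Wellp_2ndorder_nonlocal_Kuzn} applied with $\frakKeps\equiv 0$ furnishes a unique $\psi\in X_\psi$ solving \eqref{ibvp_West_limit} on a time interval $(0,T)$ whose length, together with the smallness threshold on the data, is independent of $\eps$ (here $\eta=0$, so no kernel term occurs in the bound). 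In particular $\psi$ lies in the same ball $\calB$ of \eqref{defB_Kuzn} as every $\peps$.

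Next, I would invoke Theorem~\ref{Thm:Limit} with the kernel pair $\frakK_{\eps_1}=\frakKeps$ and $\frakK_{\eps_2}=\frakK_0\equiv 0$. The argument of that theorem is insensitive to one of the two kernels vanishing: the difference $\opsi=\peps-\psi$ solves \eqref{eq:equation_frakeps_diff_Kuzn} with $\frakK_{\eps_2}$ set to $0$, the inclusions $\peps,\psi\in\calB$ provide the same uniform control (in particular of $\|\Delta\psi_{tt}\|_{L^1_t(L^2(\Om))}$ and $\|\Delta\psi_1\|_{L^2(\Om)}$) that is used there, and reducing $T$ independently of $\eps$ absorbs the energy term on the right-hand side. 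This yields
\[
\|\peps-\psi\|_{\textup{E}}\ \lesssim\ \|(\frakKeps-0)\Lconv 1\|_{L^1(0,T)}\ =\ \eps\,\|\frakK\Lconv 1\|_{L^1(0,T)} .
\]
Equivalently, one may keep $\eps_2\in(0,\beps)$ in Theorem~\ref{Thm:Limit} and pass to the limit $\eps_2\searrow 0$ using the $\eps$-uniform bounds on $\{\peps\}$.

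It then remains to check that $\|\frakK\Lconv 1\|_{L^1(0,T)}$ is finite and independent of $\eps$, which is a short computation for each admissible $\frakK$. For the Abel kernel $\frakK=\rt^{-\alpha}g_\alpha$, $\alpha\in(0,1)$, the semigroup property of Riemann--Liouville integrals gives $\frakK\Lconv 1=\rt^{-\alpha}g_{\alpha+1}$, so that $\|\frakK\Lconv 1\|_{L^1(0,T)}=\rt^{-\alpha}T^{\alpha+1}/\Gamma(\alpha+2)$; for $g_0=\delta_0$ one has $\frakK\Lconv 1\equiv 1$ and the norm is $T$. For the Mittag-Leffler kernel (with $\tau$ fixed) one has $\frakK\in L^1(0,T)$ since its only singularity is the integrable $O(t^{b-1})$ behaviour at the origin, so by Young's convolution inequality $\|\frakK\Lconv 1\|_{L^1(0,T)}\le T\,\|\frakK\|_{L^1(0,T)}<\infty$. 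In every case this is a constant depending only on $T$ and the fixed kernel parameters; combining with the previous display gives $\|\peps-\psi\|_{\textup{E}}\lesssim\eps$, the asserted linear rate.

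The only step requiring genuine care --- as opposed to routine bookkeeping --- is the passage to the limiting kernel $\frakK_0\equiv 0$ in Theorem~\ref{Thm:Limit}, i.e.\ verifying that the constant there remains uniform down to $\eps_2=0$; this is dispatched either by re-reading its proof with a vanishing kernel or by the uniform-bound limiting argument noted above. Everything else follows by concatenating Theorems~\ref{Thm:Wellp_2ndorder_nonlocal_Kuzn} and~\ref{Thm:Limit} with the explicit evaluation of $\frakK\Lconv 1$.
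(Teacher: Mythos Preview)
Your proposal is correct and follows essentially the same approach as the paper: identify the limiting problem with the zero kernel (which trivially satisfies \eqref{assumption1}, \eqref{assumption2}, \eqref{assumption3}), apply Theorem~\ref{Thm:Limit} with $\frakK_{\eps_2}=0$, and factor out the $\eps$ to obtain the linear rate. The paper's proof is a two-line version of yours; your additional remarks on why the zero kernel is admissible in Theorem~\ref{Thm:Limit} and your explicit computation of $\|\frakK\Lconv 1\|_{L^1(0,T)}$ for the example kernels are helpful elaborations but not needed for the argument to go through.
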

\begin{proof}
	In this setting, the limiting kernel is $\frakK_0=0$, and it satisfies assumptions \eqref{assumption1}, \eqref{assumption2}, and \eqref{assumption3}. By Theorem~\ref{Thm:Limit}, we then immediately have
	\begin{equation}
		\begin{aligned}
			\|\peps-\psi\|_{\textup{E}} \leq C \|\frakKeps*1\|_{L^1(0,T)} = C \eps \|\frakK*1\|_{L^1(0,T)}, 
		\end{aligned}
	\end{equation}
	for some $C>0$, independent of $\eps$, which concludes the proof.
\end{proof}

\subsubsection{The vanishing thermal relaxation time limit with Mittag-Leffler kernels} \label{Sec:tau_limit_Kuzn}
We now turn our attention to the kernels that were motivated by the presence of thermal relaxation in the heat flux laws of the propagation medium, and have the form
\begin{equation} \label{ML_kernel_Sec4}
	\begin{aligned}
		\frakKeps(t)=&\,\delta\left(\frac{\rt}{\eps}\right)^{a-b}\frac{1}{\eps^b}t^{b-1}E_{a,b}\left(-\left(\frac{t}{\eps}\right)^a\right), \quad a,b \in (0,1].
	\end{aligned}
\end{equation}
\indent In what follows, we intend to take the limit $\eps \searrow0$, while keeping $\rt>0$ fixed. 
Unlike our work in~\cite{kaltenbacher2023limting},
on the fractionally damped Westervelt equation, we limit ourselves here to the case $a-b \leq 0$ (since, to the best of our knowledge, \eqref{assumption2} does not hold uniformly in $\eps$ when $a-b>0$). We will then prove that solutions $\peps$ of \eqref{ibvp_Kuzn_general} converge to the solution $\psi$ of the following time-fractional equation: 
\begin{equation} 
	\begin{aligned}
		&	(1+2{k} \psi_t)\psi_{tt}-c^2 \Delta \psi -  \rt^{a-b}
		{\textup{D}_t^{a-b+1}\D u} + 2\ell \,\nabla\psi\cdot\nabla\psi_t =f,	\end{aligned}
\end{equation}
supplemented by the same boundary and initial conditions as in \eqref{ibvp_Kuzn_general}. Recall that 
$$\textup{D}_t^{a-b+1}\D \psi = g_{b-a} \Lconv \D \psi_t. $$ Note also that in case $a=b$, the limiting equation is strongly damped. 
\begin{proposition}\label{Prop:Limit_a<=b}
	Let $\rt>0$ and $\delta>0$ be fixed, and let the assumptions of Theorems~\ref{Thm:Wellp_2ndorder_nonlocal_Kuzn} and~\ref{Thm:Limit} hold. Consider the family of solutions $\{\peps\}_{\eps \in(0,\beps)}$ of \eqref{ibvp_Kuzn_general} with the kernel given by
	\begin{equation} \label{kernel_form}
		\begin{aligned}
			\frakKeps(t)=&\,\delta\left(\frac{\rt}{\eps}\right)^{a-b}\frac{1}{\eps^b}t^{b-1}E_{a,b}\left(-\left(\frac{t}{\eps}\right)^a\right) \quad \text{where } \, 0 < a \leq b\leq 1.
		\end{aligned}
	\end{equation}
Then the family	 $\{\peps\}_{\eps \in(0,\beps)}$
	converges to the solution $\psi$ of %\vanja{ $f=0$ }
	\begin{equation}
		\left \{	\begin{aligned} 
			&(1+2{k} \psi_t)\psi_{tt}-c^2 \Delta \psi -  \D\frakK_0\Lconv\psi_t + 2\ell \,\nabla\psi\cdot\nabla\psi_t = 0\quad  &&\text{in } \Omega \times (0,T), \\
			&\psi =0 \quad  &&\text{on } \partial \Omega \times (0,T),\\
			&(\psi, \psi_t)=(\psi_0, \psi_1), \quad  &&\text{in }  \Omega \times \{0\},
		\end{aligned} \right.
	\end{equation}
	with the kernel $\frakK_0= \delta\rt^{a-b} g_{b-a}$ at the following rate:
	\begin{equation}
		\|\peps-\psi\|_{\textup{E}}\lesssim \|(\frakKeps-\frakK_0)\Lconv1\|_{L^1(0,T)}  \sim\eps^a
		\quad \mbox{ as } \ \eps  \searrow 0.
	\end{equation}
\end{proposition}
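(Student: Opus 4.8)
The plan is to recognise the limiting equation as the member of the family treated in Theorem~\ref{Thm:Wellp_2ndorder_nonlocal_Kuzn} associated with the kernel $\frakK_0 = \delta\rt^{a-b}g_{b-a}$, to extend the continuity estimate of Theorem~\ref{Thm:Limit} to the pair $(\eps,0)$, and finally to evaluate $\|(\frakKeps-\frakK_0)\Lconv1\|_{L^1(0,T)}$ explicitly. I would begin by observing that the limit problem is well posed: since $0<a\le b\le1$ we have $b-a\in[0,1)$, so $\frakK_0 = \delta\rt^{-(b-a)}g_{b-a}$ is exactly an Abel kernel of the form \eqref{def_fractional_kernel} with $\alpha=b-a$ when $a<b$, and equals $\delta\delta_0$ when $a=b$ (recall \eqref{def_g0}); in either case $\frakK_0$ satisfies \eqref{assumption1}, \eqref{assumption2} and \eqref{assumption3} with $\eps$-independent constants — by the discussion following Theorem~\ref{Thm:Wellp_2ndorder_nonlocal_Kuzn} for the Abel case, and trivially for $\delta_0$ since $\delta_0\Lconv w = w$. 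Shrinking $T$ once more if necessary so that it is admissible for the enlarged kernel family $\{\frakKeps\}_{\eps\in(0,\beps)}\cup\{\frakK_0\}$, Theorem~\ref{Thm:Wellp_2ndorder_nonlocal_Kuzn} then supplies a unique solution $\psi$ of the limiting problem lying in the ball $\calB$ of \eqref{defB_Kuzn}; in particular $\|\D\psi_{tt}\|_{L^1(L^2(\Om))}$ and $\|\D\psi_1\|_{\Ltwo}$ are bounded independently of $\eps$.

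\textbf{Continuity up to $\eps=0$.} Next I would show that the estimate of Theorem~\ref{Thm:Limit} still holds for the pair $(\eps,0)$. The difference $\opsi=\peps-\psi$ solves \eqref{eq:equation_frakeps_diff_Kuzn} with $\eps_1=\eps$ and, formally, $\eps_2=0$, $\psi^{\eps_2}:=\psi$, $\frakK_{\eps_2}:=\frakK_0$; crucially the principal memory term retains the kernel $\frakKeps$, which satisfies \eqref{assumption3} by hypothesis. Testing with $\opsi_t$ and repeating the proof of Theorem~\ref{Thm:Limit} verbatim — the only inputs being the membership of $\peps$ and $\psi$ in $\calB$ and the rewriting
\begin{equation*}
	(\frakKeps-\frakK_0)\Lconv\D\psi_t
	= \bigl(1\Lconv(\frakKeps-\frakK_0)\bigr)\Lconv\D\psi_{tt}
	+ \bigl((\frakKeps-\frakK_0)\Lconv1\bigr)\D\psi_1 ,
\end{equation*}
legitimate because $\psi\in X_\psi$ (so $\D\psi_{tt}\in L^1(L^2(\Om))$, $\D\psi_1\in\Ltwo$) and $(\frakKeps-\frakK_0)\Lconv1\in L^\infty(0,T)$ (verified below) — yields $\|\peps-\psi\|_{\textup{E}}\lesssim\|(\frakKeps-\frakK_0)\Lconv1\|_{L^1(0,T)}$.

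\textbf{Evaluating the kernel distance.} It then remains to compute the right-hand side. Writing $\frakKeps$ as in \eqref{ML_kernel_Sec4}, rescaling $s\mapsto s/\eps$, and using the termwise identity $\int_0^x s^{\beta-1}E_{a,\beta}(-s^a)\ds = x^{\beta}E_{a,\beta+1}(-x^a)$, one obtains
\begin{equation*}
	(\frakKeps\Lconv1)(t)=\delta\rt^{a-b}\,\eps^{-a}\,t^{b}\,E_{a,b+1}\!\bigl(-(t/\eps)^a\bigr),\qquad
	(\frakK_0\Lconv1)(t)=\frac{\delta\rt^{a-b}}{\Gamma(b-a+1)}\,t^{b-a}.
\end{equation*}
The recursion $z\,E_{a,\beta+a}(z)=E_{a,\beta}(z)-1/\Gamma(\beta)$ applied with $\beta=b-a+1$ and $z=-(t/\eps)^a$ makes the leading $t^{b-a}$ terms cancel, leaving
\begin{equation*}
	\bigl((\frakKeps-\frakK_0)\Lconv1\bigr)(t)=-\,\delta\rt^{a-b}\,t^{b-a}\,E_{a,b-a+1}\!\bigl(-(t/\eps)^a\bigr).
\end{equation*}
Because $0<a\le1$ and $b-a+1\ge a$ (as $2a\le a+b\le b+1$), the function $E_{a,b-a+1}(-\,\cdot\,)$ is nonnegative and nonincreasing on $[0,\infty)$ (complete monotonicity), which gives the $L^\infty(0,T)$ bound used above. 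Integrating in $t$, rescaling again and using the same termwise identity with $\beta=b-a+1$,
\begin{equation*}
	\|(\frakKeps-\frakK_0)\Lconv1\|_{L^1(0,T)}
	= \delta\rt^{a-b}\!\int_0^T t^{b-a}E_{a,b-a+1}\!\bigl(-(t/\eps)^a\bigr)\dt
	= \delta\rt^{a-b}\,T^{b-a+1}\,E_{a,b-a+2}\!\bigl(-(T/\eps)^a\bigr),
\end{equation*}
and \eqref{asymptotics} (applicable since $b-a+2\ge a$, with $\Gamma(b-2a+2)\in(0,\infty)$ because $b-2a\ge-1$) gives $E_{a,b-a+2}(-(T/\eps)^a)\sim\eps^{a}/\bigl(\Gamma(b-2a+2)T^{a}\bigr)$ as $\eps\searrow0$; hence $\|(\frakKeps-\frakK_0)\Lconv1\|_{L^1(0,T)}\sim\eps^a$. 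For $a=b$ the same formulas simply give $(\frakKeps-\frakK_0)\Lconv1=-\delta E_a(-(t/\eps)^a)$ and the strongly damped limit.

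\textbf{Main obstacle.} I expect the delicate part to be this final computation: collapsing the kernel distance to a single Mittag-Leffler value via the two Mittag-Leffler identities is what upgrades qualitative $o(1)$ convergence to the sharp rate $\eps^a$, and it hinges on the parameters staying in the ranges where \eqref{asymptotics} and the complete monotonicity of $E_{a,\beta}(-\,\cdot\,)$ are available — namely $b-a+1\ge a$, $b-a+2\ge a$ and $b-2a+2>0$ — all of which are exactly what $0<a\le b\le1$ secures. By contrast, extending the continuity estimate of Theorem~\ref{Thm:Limit} to the limit kernel (a measure when $a=b$) and checking that $\frakK_0$ satisfies the standing kernel assumptions are routine once $\psi$ is known to lie in $\calB$.
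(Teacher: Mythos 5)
Your proof is correct and follows essentially the same route as the paper: apply the continuity estimate of Theorem~\ref{Thm:Limit} with the limiting kernel $\frakK_0$ (which satisfies \eqref{assumption1}, \eqref{assumption2}, and \eqref{assumption3}) and then compute the asymptotics of $\|(\frakKeps-\frakK_0)\Lconv 1\|_{L^1(0,T)}$. The only differences are presentational: you re-derive the closed form $\|(\frakKeps-\frakK_0)\Lconv 1\|_{L^1(0,T)}=\delta\rt^{a-b}\,T^{1+b-a}E_{a,2+b-a}\bigl(-(T/\eps)^a\bigr)$ and its $\eps^a$ decay from the Mittag-Leffler integration and recursion identities (the paper cites this computation from its companion work), and you make explicit the extension of Theorem~\ref{Thm:Limit} to the pair $(\eps,0)$ and the well-posedness of the limit problem, which the paper treats implicitly.
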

\begin{proof}
	By Theorem~\ref{Thm:Limit} ($\frakK_0$ satisfies \eqref{assumption1}, \eqref{assumption2}, and \eqref{assumption3}), we have
	\begin{equation}
		\| \peps-\psi\|_{\textup{E}}\leq C  \|(\mathfrak{K}_{\eps}-\mathfrak{K}_{0})*1\|_{L^1(0,T)}
	\end{equation}
	To further establish the asymptotic behavior of the right-hand side as $\eps \searrow 0$, we rely on the following asymptotic behavior established in~\cite[Proposition 4.1]{kaltenbacher2023limting}:
	\[
	\begin{aligned}
		\|(\frakKeps-\frakK_0)*1\|_{L^1(0,T)}=&\, \delta T^{1+b-a}E_{a,2+b-a}\left(-\left(\frac{T}{\eps}\right)^a\right)\\
		\sim&\, \delta \frac{T^{1+b-a}}{\Gamma(2+b-2a)} \left(\frac{T}{\eps}\right)^{-a} \qquad
		\text{ as }\ \frac{T}{\eps}\to\infty,
	\end{aligned}
	\] 
	which yields the claimed rate of convergence when $0 < a \leq b\leq 1$.
\end{proof}

\section{Differences in the analysis of the nonlocal Blackstock equation }\label{Sec:Blackstock_prop}
We now turn our attention to the limiting behaviour of the Blackstock equation and consider the following initial-boundary value problem: 
\begin{equation}\label{ibvp_Blackstock_general}
	\left \{	\begin{aligned} 
		&\pepstt-c^2 (1-2\tk \pepst)\Delta \peps - \frakKeps * \Delta \pepst + 2 \ell \,\nabla \peps \cdot \nabla\pepst= 0 \quad  &&\text{in } \Omega \times (0,T), \\
		&\peps =0 \quad  &&\text{on } \partial \Omega \times (0,T),\\
		&(\peps, \pepst)=(\psi_0, \psi_1), \quad  &&\text{in }  \Omega \times \{0\}.
	\end{aligned} \right.
\end{equation}
We focus in this section on pointing out the differences in treating the Blackstock nonlinearity compared Kuznetsov's. First off, the coercivity assumption \eqref{assumption3} can be weakened. 
\subsection*{Blackstock-specific assumption on the memory kernel} 
 In the absence of a nonlinearity involving $\peps_{tt}$, a weaker coercivity assumption on the kernel  compared to what we had in \eqref{assumption3} suffices. \vspace*{2mm}
\begin{center}
	\fbox{ 
		\begin{minipage}{0.8\textwidth}
			It holds 
			\begin{equation}\label{assumption3_Black} \tag{\ensuremath{\bf {{A}}^{\textbf{B}}_3}}
				 \int_0^t\bigl((\frakKeps\Lconv y)(s), y(s)\bigr)_{L^2(\Omega)}\ds \geq 0, \quad 
				y\in L^2(0,t;L^2(\Omega))    
			\end{equation}
			for all $t\in(0,T)$.
		\end{minipage}
	}
\end{center}
\vspace*{2mm}
Since \eqref{assumption3_Black} assumption is weaker than \eqref{assumption3}, we refer to Section~\ref{Sec:Assumption_Kuzn} and \cite[Section 5.2]{kaltenbacher2023limting} for discussions on the verification of this assumption. Assumption~\eqref{assumption3_Black} is in particular verified for all the kernels considered in this work.

\subsection{Uniform well-posedness of the nonlocal Blackstock equation}
The main result to prove in order to conduct the limiting behavior analysis is to show uniform well-posedness of \eqref{ibvp_Blackstock_general} with respect to $\eps$. The strategy will rely on studying a linearized problem combined with a Banach fixed-point argument, as before. However, here a different linearization is needed to tackle the Blackstock nonlinearity, namely
\begin{subequations} \label{ibvp_2ndorder_lin_Blackstock}
	\begin{equation} \label{ibvp_2ndorder_lin_Blackstock:Eq}
		%	\left\{
		\begin{aligned}
			 \peps_{tt}-c^2 \bbb\Delta \peps - \frakKeps* \D\peps_{t}+ \nabla \lll \ \cdot \nabla \peps_t=0 \ \text{in }\Omega\times(0,T),
		\end{aligned} %\right.
	\end{equation}
	with variable coefficients
	\begin{align}
		\bbb=1-2\tk \phi_t, \qquad \lll= 2 \ell \phi, \qquad \tk, \ell \in \R,
	\end{align}
	supplemented by the initial and boundary conditions:
	\begin{equation} \label{ibvp_2ndorder_lin_Blackstock:Conditions}
		\begin{aligned}
			(\peps, \peps_t) \vert_{t=0}=\,(\psi_0, \psi_1), \qquad \peps \vert_{\partial \Omega}=0.
		\end{aligned} 
	\end{equation}
\end{subequations}

\subsubsection{ Uniform well-posedness of a linearized Blackstock equation with variable coefficients}
Compared to Proposition~\ref{Prop:WellP_Lin_2ndorder_nonlocal}, we need less regularity-in-time from our solution. This can be explained by the fact that the term $\pepstt$ appears linearly in the Blackstock equation. This leads us to consider a different space of solutions and bootstrap argument.
\begin{proposition}
	\label{Prop:WellP_Lin_2ndorder_nonlocal_Blackstock}
	Let $\eps \in (0, \beps)$ and $k$, $\ell \in \R$. Given $T>0$, 
	let  $\phi \in X_\phi$ and assume that there exist $\overline{\bbb}$ and $\underline{\bbb}$, independent of $\eps$, such that
	\begin{equation} \label{non-deg_phi_Black}
		0<\underline{\bbb} \leq \bbb(\phi)=1-2\tk \phi_t(x,t)\leq \overline{\bbb} \quad \text{a.e. in } \ \Omega \times (0,T). 
	\end{equation} Let  assumptions \eqref{assumption1}, \eqref{assumption2}, and \eqref{assumption3_Black} on the kernel hold.
	Furthermore, assume that the initial conditions satisfy
	\begin{equation}
			(\psi_0, \psi_1) \in \begin{cases}
				\Honefour\times \Honethree \ \text{if } \frakKeps \equiv \eps \delta_0 \ \text{or }  \frakKeps \equiv 0,\\[2mm]
				\Honefour\times \Honefour \ \text{if } \frakKeps \not \equiv 0 \in L^1(0,T) . 
			\end{cases}
	\end{equation}
	Then there exists $\Xi^\textup{\textbf{B}}>0$, independent of $\eps$, such that if
	\begin{equation}\label{def_xi_Black}
		\begin{aligned}
			\begin{multlined}[t] 
				\|\phi\|_{L^1(H^4(\Omega))} +	\|\phi_{t}\|_{L^1(H^3(\Omega))} +	\|\phi_{tt}\|_{L^2(H^2(\Omega))} \leq \Xi^\textup{\textbf{B}},
			\end{multlined}
		\end{aligned}
	\end{equation}
	and if the final time $T=T(\|\phi\|_{X_\phi})$ is small enough, then there is a unique  solution $\peps$ of \eqref{ibvp_2ndorder_lin_Blackstock:Eq}, \eqref{ibvp_2ndorder_lin_Blackstock:Conditions} in
	\begin{equation}\label{solution_space_psi_Black}
		\begin{aligned}
			X_{\psi}^\textbf{B} =& \,\begin{multlined}[t] W^{2,\infty}(0,T;\Honetwo)\cap W^{1,\infty}(0,T;\Honethree) \cap L^2(0,T;\Honefour).
			\end{multlined}
		\end{aligned}
	\end{equation} 
	This solution satisfies the estimate 
	\begin{equation} \label{Lin2ndorderNonlocal_Kuzn:Main_energy_est_Black}
		\begin{aligned}
			& \begin{multlined}[t] \|\peps\|^2_{X_\psi^\textbf{B}}
			\end{multlined}
			\leq C_{\textup{lin}}^\textbf{B}(T)\Bigl(\|\psi_0\|^2_{H^4(\Om)}+\|\psi_1\|^2_{\Hthree}+ \eta\|\frakKeps\|_{L^1(0,T)} ^2\|\D^2 \psi_1\|^2_{\Ltwo}
			{\Bigr)}.
		\end{aligned}
	\end{equation}
	where $\eta = 0$ if $\frakKeps \in \{ 0, \eps\delta_0 \}$ and $\eta =1$ otherwise.
\end{proposition}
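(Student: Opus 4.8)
The plan is to follow the proof of Proposition~\ref{Prop:WellP_Lin_2ndorder_nonlocal}: work on a smooth Galerkin semi-discretization in space (eigenfunctions of $-\D$ with homogeneous Dirichlet data), get existence of approximate solutions by reducing the discrete problem to a system of second-kind Volterra equations as in~\cite{kaltenbacher2022time} and~\cite[Appendix A]{kaltenbacher2023limting} (details omitted), derive $\eps$-uniform a~priori bounds on the approximate solutions, pass to the continuous problem by weak-$*$ compactness as in~\cite[Proposition~3.1]{kaltenbacher2022parabolic}, and prove uniqueness by a linear energy estimate on the difference of two solutions. The decisive simplification compared with the Kuznetsov case is that $\pepstt$ enters~\eqref{ibvp_2ndorder_lin_Blackstock:Eq} \emph{linearly}, so no variable coefficient multiplies the highest time derivative; consequently the term $-(\pepstt\,\D\aaa+\nabla\pepstt\cdot\nabla\aaa,\D p_t)_{L^2}$ that forced the additional $W^{3,1}(0,T;\Honezero)$ regularity in Proposition~\ref{Prop:WellP_Lin_2ndorder_nonlocal} does not occur, and a single high-order test of the time-differentiated equation, combined with a bootstrap, will produce exactly the solution space $X_\psi^\textbf{B}$ of~\eqref{solution_space_psi_Black}.

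For the a~priori bound I set $p:=\pepst$ and differentiate the (semi-discrete) equation in time to obtain
\begin{equation*}
	p_{tt}-c^2\bbb\,\D p-\frakKeps\Lconv\D p_t=\fB,\qquad
	\fB:=c^2\bbb_t\,\D\peps+\eta\,\frakKeps(t)\,\D\psi_1-\nabla\lll_t\cdot\nabla p-\nabla\lll\cdot\nabla p_t,
\end{equation*}
using $\D p(0)=\D\psi_1$; the one genuinely new term relative to~\eqref{def_F} is $c^2\bbb_t\D\peps$, produced by differentiating the variable-coefficient elliptic term. Testing with $(-\D)^2p_t$ and integrating by parts (legitimate for smooth Galerkin iterates, for which $p,\D p,\D p_t$ and $\fB$ vanish on $\partial\Om$), the leading term gives $\tfrac12\ddt\nLtwo{\D p_t}^2$, the elliptic term yields $\tfrac{c^2}{2}\ddt\nLtwo{\sqrt{\bbb}\,\nabla\D p}^2$ up to commutators in $\nabla\bbb$, $\D\bbb$, $\bbb_t$, and the kernel term contributes $\int_0^t\bigl((\frakKeps\Lconv\nabla\D p_t)(s),\nabla\D p_t(s)\bigr)_{L^2}\ds\ge0$ by~\eqref{assumption3_Black} with $y=\nabla\D p_t$ --- here the weaker Blackstock coercivity assumption is enough precisely because $p_{tt}$ need not be recovered by dividing through by a variable coefficient. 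The value $\nLtwo{\D p_t(0)}=\nLtwo{\D\pepstt(0)}$ entering this identity is controlled by evaluating~\eqref{ibvp_2ndorder_lin_Blackstock:Eq} at $t=0$, which explains the data requirements; when $\frakKeps\in L^1(0,T)$ is nontrivial, the $\eta\,\|\frakKeps\|_{L^1(0,T)}^2\nLtwo{\D^2\psi_1}^2$ contribution on the right of~\eqref{Lin2ndorderNonlocal_Kuzn:Main_energy_est_Black} comes from the $\eta\,\frakKeps(t)\D\psi_1$ term in $\fB$.

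It then remains to estimate the commutators and the terms $(\D\fB,\D p_t)_{L^2}$, which I would do in $L^1_t(L^2(\Om))$ exactly as in the proof of Proposition~\ref{Prop:WellP_Lin_2ndorder_nonlocal}. The only really delicate piece is $(\nabla\lll\cdot\nabla\D p_t,\D p_t)_{L^2}$, rewritten as $-\tfrac12(\D\lll,(\D p_t)^2)_{L^2}$ and bounded by $\|\D\lll\|_{L^1(L^\infty(\Om))}\nLinftLtwo{\D p_t}^2\lesssim\Xi^\textbf{B}\nLinftLtwo{\D p_t}^2$ via~\eqref{def_xi_Black}; every other such term carries either a power of $T$ or a factor of the smallness constant $\Xi^\textbf{B}$ in an appropriate $L^1_t$- or $L^2_t$-in-time norm of the coefficients (the nondegeneracy~\eqref{non-deg_phi_Black} being used wherever the equation is inverted for $\D\peps$ or for $p_{tt}$, to control $\|1/\bbb\|_{L^\infty}$ and $\|\nabla(1/\bbb)\|$), so all of it can be absorbed by the left-hand side together with the standard lower-order bound from testing the undifferentiated equation with $\pepst$. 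This controls $\nLinftLtwo{\D\pepstt}$ and $\nLinftLtwo{\nabla\D\pepst}$, i.e.\ the $W^{2,\infty}(0,T;\Honetwo)$ and $W^{1,\infty}(0,T;\Honethree)$ parts of $X_\psi^\textbf{B}$; the remaining $L^2(0,T;\Honefour)$ bound follows by a bootstrap in the spirit of~\eqref{bootstrap_identity}--\eqref{eq:bootstrap_Kuzn}: rewrite the undifferentiated equation as $[\,c^2\bbb+\frakKeps\Lconv\partial_t\,]\D\peps=r$ with $r=\pepstt+\nabla\lll\cdot\nabla\pepst$, test with $\D^3\peps$, and use~\eqref{assumption2} with $y=\D^2\peps$ to get $\int_0^t(\frakKeps\Lconv\partial_t\D^2\peps,\D^2\peps)_{L^2}\ds\ge-\cAtwo\nLtwo{\D^2\psi_0}^2$; since $\nLtwotLtwo{\D r}\lesssim\sqrt T\,\nLinftLtwo{\D\pepstt}+(\text{terms carrying }\Xi^\textbf{B})$, this bounds $\nLtwotLtwo{\D^2\peps}$ in terms of the already-controlled quantities and the data. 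Transferring the bounds to the continuous solution and proving uniqueness by Grönwall's inequality applied to the difference of two solutions is then routine.

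I expect the main obstacle, compared with the constant-coefficient leading part of the Kuznetsov equation, to be the book-keeping around the variable coefficient $\bbb$ under the fourth-order test: besides the commutators $c^2(\nabla\bbb\,\D p_t,\nabla\D p)_{L^2}$ and $c^2(\D\bbb\,\D p,\D p_t)_{L^2}$ coming from $\D(\bbb\D p)$, the term $c^2(\bbb_t\,\D^2\peps,\D p_t)_{L^2}$ inside $(\D\fB,\D p_t)_{L^2}$ couples the top-order energy estimate to the $L^2(0,T;\Honefour)$ bootstrap bound. This circularity is only apparent: that last term carries the factor $\|\bbb_t\|_{L^2_t(L^\infty(\Om))}\lesssim\|\phi_{tt}\|_{L^2(H^2(\Om))}\le\Xi^\textbf{B}$, so for $\Xi^\textbf{B}T$ small it --- and hence the coupled system of estimates --- closes; the kernel contributions, by contrast, are painless thanks to~\eqref{assumption2}--\eqref{assumption3_Black}.
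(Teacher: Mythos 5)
Your proposal is correct and follows essentially the same route as the paper's proof: time-differentiate the Galerkin system, test with $(-\D)^2 p_t$ using \eqref{assumption3_Black} to drop the kernel term, handle the $\nabla\lll\cdot\nabla\D p_t$ contribution by the same rewriting as $-\tfrac12(\D\lll,(\D p_t)^2)_{L^2}$, recover the $L^2(0,T;\Honefour)$ bound by testing $[\frakKeps\Lconv\partial_t+c^2\bbb]\D\peps=r$ with $\D^3\peps$ via \eqref{assumption2}, and close the coupling through $c^2\bbb_t\D^2\peps$ using $\|\bbb_t\|_{L^2(L^\infty(\Om))}\lesssim\Xi^\textup{\textbf{B}}$ and small $T$, exactly as in the paper. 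The only difference is cosmetic bookkeeping (e.g.\ not spelling out the commutators $c^2\D\bbb\,\D\peps$ and $c^2\nabla\bbb\cdot\nabla\D\peps$ arising in the bootstrap, which the paper absorbs using smallness of $T$), so no substantive gap.
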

\begin{proof} 
	Similarly to the approach in the proof of Proposition~\ref{Prop:WellP_Lin_2ndorder_nonlocal}, we use a Galerkin procedure in a suitable Hilbert space and consider a time-differentiated semi-discrete problem. We set $p=\peps_t$, where, again, the dependence of $p$ on $\eps$ is omitted to simplify the notation. 
	The time-differentiated semi-discrete nonlocal Blackstock equation is then given by
		\begin{equation} \label{time_diff_discr_2ndorder_nonlocal}
		 p_{tt}-c^2 \bbb\Delta p -   \frakKeps* \D p_{t}= \fB 
	\end{equation}
	with the right-hand side
	\begin{equation}\label{eq:def_f_Black}
		\fB(t)= -\nabla \lll_t \cdot \nabla p-\nabla \lll \cdot \nabla p_t 
	+c^2 \bbb_t\Delta \peps + \eta \frakKeps(t) \D p(0).
	\end{equation}
	Similarly to the proof of Proposition~\ref{Prop:WellP_Lin_2ndorder_nonlocal}, we test this equation with $(-\Delta)^2p_t$ and integrate over space and $(0,t)$, where now we can rely on assumption \eqref{assumption3_Black} to conclude that
	\begin{align} \label{nonngeg_kernel}
\int_0^t\bigl((\frakKeps\Lconv \nabla \Delta p_t)(s), \nabla \Delta p_t(s)\bigr)_{L^2(\Omega)}\ds \geq 0	.
	\end{align} Most of the estimates remain the same, we therefore simply point out how to deal with the $\bbb$ terms arising after integration by parts in \eqref{ineq_1} and \eqref{eq:bootstrap_Kuzn}. We first note that, when $\bbb \neq 1$, \eqref{c_term_est} generalizes to
	\begin{equation}
		\begin{aligned}
			&-c^2(\bbb \Delta p, (-\Delta)^2 p_{t})_{L^2}\\
			%	=&\, c^2(\bbb \nabla \Delta p+ \Delta p \nabla \bbb, \nabla \Delta p_t)_{L^2} \\ 
			=&\, \frac{c^2}{2}\ddt  (\bbb \nabla \Delta p, \nabla \Delta p)_{L^2} -\frac{c^2}{2} (\bbb_t \nabla \Delta p, \nabla \Delta p)_{L^2}-c^2(\nabla \Delta p \cdot \nabla \bbb+\D p \D \bbb,  \Delta p_t)_{L^2}.
		\end{aligned}
	\end{equation}
	Similarly to before, by the time-differentiated semi-discrete PDE  \eqref{time_diff_discr_2ndorder_nonlocal}, we have $\fB=0$ on the boundary and thus
	$(\fB, (-\Delta)^2p_t)_{L^2}=(\Delta \fB, \Delta p_t)_{L^2}$.
	Testing \eqref{time_diff_discr_2ndorder_nonlocal} with $(-\Delta)^2 p_t$ and utilizing \eqref{nonngeg_kernel} therefore yields
	\begin{equation}\label{enid_2ndorder_lin}
		\begin{aligned}
			&\frac12 \nLtwo{\D p_t}^2 \big \vert_0^t
			+\frac{c^2}{2} \nLtwo{\sqrt{\bbb}\nabla\Delta p}^2\big \vert_0^t  \\
			\leq&\, \begin{multlined}[t] \int_0^t\Bigl(\frac{c^2}{2}(\bbb_t\nabla\D p,\nabla\D p)_{L^2}
				+(c^2\nabla\D p\cdot\nabla\bbb + c^2\D p\, \D\bbb +\D \fB,\D p_t)_{L^2}
				\Bigr)\ds. \end{multlined}
		\end{aligned}
	\end{equation}
	The terms on the right-hand side of \eqref{enid_2ndorder_lin} can be estimated as follows. We first have
	\[\int_0^t
	\frac{c^2}{2}(\bbb_t\nabla\D p,\nabla\D p)_{L^2}
	\ds
	\lesssim\, \Xi^\textbf{B} \|\nabla\D p\|_{L^\infty_t(L^2(\Om))}^2.
	\]
	Additionally,
	\[
	\begin{aligned}
		&\int_0^t
		(c^2\nabla\D p\cdot\nabla\bbb + c^2\D p\, \D\bbb,\D p_t)_{L^2}
		\ds
		\leq 
		c^2 \| \nabla\D p\cdot\nabla\bbb +\D p\, \D\bbb \|_{L^1(L^2(\Om))}
		\|\D p_t\|_{L^{\infty}_t(L^2(\Om))}.
	\end{aligned}
	\]
	The first term on the right in the above bound can be further estimated as follows:
	\[
	\begin{aligned}
		&\|\nabla\D p\cdot\nabla\bbb+\D p\, \D\bbb\|_{L^1(L^2(\Om))}\\
		\lesssim&\,	\|\nabla\D p\|_{L^\infty_t(L^2(\Om))}\|\nabla\bbb\|_{L^1(L^\infty(\Om))}
		+\|\nabla \D p\|_{L^\infty_t(L^2(\Om))}\|\D\bbb\|_{L^1(L^3(\Om))}\\
		\lesssim&\,	\Xi^\textbf{B}(\|\nabla\D p\|_{L^\infty_t(L^2(\Om))}
		+\|\nabla \D p\|_{L^\infty_t(L^2(\Om))})
	\end{aligned}
	\]
	since $\D p=0$  on $\partial \Omega$. 
It thus remains to estimate the term $\int_0^t
(\D \fB,\D p_t)_{L^2}
\ds.$
	Recalling how $\fB$ is defined in \eqref{eq:def_f_Black},
	we have the following identity: 
	\begin{equation} \label{Delta_tildef_Black}
		\begin{aligned}
			&\D\fB
			=\, \begin{multlined}[t]
				c^2 \D\bbb_t\D \peps+ 2c^2 \nabla\bbb_t\cdot\nabla\D \peps +c^2 \bbb_t\Delta^2 \peps 
				\\+\Delta(-\nabla \lll_t \cdot \nabla p-\nabla \lll \cdot \nabla p_{t} )+ \eta \frakKeps(t) \D^2 p(0), \end{multlined}
		\end{aligned}
	\end{equation}
We use the same splitting approach as in the proof of Proposition~\ref{Prop:WellP_Lin_2ndorder_nonlocal}
	\begin{equation} \label{Delta_tildef_Deltapt_second_Black}
		\begin{aligned}
			\int_0^t (\D \fB,\D p_t)_{L^2} \ds
			\leq&\,  
			\|\D \fB+g\|_{L^1(L^2(\Om))} \|\D p_t\|_{L^{\infty}_t(L^2(\Om))}
			+\left|\int_0^t (g,\D p_t)_{L^2} \ds\right|,
		\end{aligned}
	\end{equation}
	with $g=\nabla\lll\cdot\nabla \D p_t$. Then, since $\Delta p_t=0$ on $\partial \Omega$, we can estimate (see inequality \eqref{ineq:g_estimate})
	\begin{equation} 
	\begin{aligned}
		\left|\int_0^t (g,\D p_t)_{L^2} \ds\right|
		\lesssim \;\Xi^\textbf{B} \|\D p_t\|_{L^\infty_t(\Ltwo)}^2.
	\end{aligned}
\end{equation}
	We further have
	\begin{equation}  \label{est_Delta_tildef_Black}
		\begin{aligned}
		&	\|\D\fB+g\|_{L^1(L^2(\Om))}\\
			\lesssim&\, \begin{multlined}[t]
			\|\D\bbb_t\|_{L^1(L^2(\Om))}  {\|\D \peps\|_{L^{\infty}_t(L^\infty(\Om))}}
					+\|\nabla\bbb_t\|_{L^1(L^3(\Om))} {\|\nabla\D \peps\|_{L^{\infty}_t(L^6(\Om))}}
			\\	+\|\bbb_t\|_{L^2(L^\infty(\Om))} \|\D^2 \peps\|_{L^{2}(L^2(\Om))}
				+\|\nabla \D \lll_t \cdot \nabla p \|_{L^1(L^2(\Om))}\\+\|\nabla \lll_t \cdot \nabla \D p \|_{L^1_t(L^2(\Om))}+\|D^2 \lll_t : D^2 p \|_{L^1_t(L^2(\Om))}\\
				+\|\nabla \D \lll \cdot \nabla p_t \|_{L^1_t(L^2(\Om))}
				+\|D^2 \lll: D^2 p_t \|_{L^1_t(L^2(\Om))}
				+ \eta \|\frakKeps\|_{L^1(0,t)} \|\D^2 \psi_1\|_{L^2(\Om)}. \end{multlined}
		\end{aligned}
	\end{equation}
The $\lll$-terms are then estimated similarly to before (see \eqref{estlll1}, \eqref{estlll2}, and \eqref{estlll3_Honefour}) to obtain 
	\begin{equation}  \label{est_Delta_tildef_second_Black}
		\begin{aligned}
			&\int_0^t (\D \fB,\D p_t)_{L^2} \ds \\
			\lesssim&\, \begin{multlined}[t]
				\Xi^\textbf{B} \left(\|\D p_t\|_{L^\infty_t(L^2(\Om))}^2+\|\nabla \D p \|^2_{L^\infty_t(L^2(\Om))} \right)
				\\	
				+\|\bbb_t\|_{L^2(H^2(\Om))} {\|\D^2 \peps\|_{L^{2}(L^2(\Om))}}
				+ \eta \|\frakKeps\|_{L^1(0,t)} \|\D^2 \psi_1\|_{L^2(\Om)}. \end{multlined}
		\end{aligned}
	\end{equation}
	
	\noindent \emph{Bootstrap argument for $\peps \in L^2(0,T;\Honefour)$}. Similarly to Proposition~\ref{Prop:WellP_Lin_2ndorder_nonlocal}, if $\ell \not=0$, but now also if $\bbb_t\not=0$, we have to estimate the $\Delta^2 \peps$ term in \eqref{est_Delta_tildef_second_Black}. To this end, we test 
	\begin{align} \label{bootstrap_identity_Black}
		[\frakKeps\Lconv\partial_t+\bbb c^2]\D\peps =r:= p_t+\nabla \lll \cdot \nabla p
	\end{align}
	with 
	$\D^3\peps$ 
	and integrating by parts, due to assumption \eqref{assumption2}, we have
	\[
	\begin{aligned}
		&c^2\|\sqrt{\bbb}\D^2\peps\|_{L^2_t(L^2(\Om))}^2\\
		\leq&\,
		\int_0^t\Bigl\{(\D r,\D^2\peps)_{L^2}-c^2\D[\bbb\D\peps],\D^2\peps)_{L^2}\Bigr\}\ds + \cAtwo \|\Delta^2 \psi_0\|^2_{\Ltwo}\\ 
		& \begin{multlined}
			\leq\|\D^2\peps\|_{L^2_t(L^2(\Om))}
		\left\|\D p_t-\D [\nabla \lll \cdot \nabla p]-c^2\D\bbb\D\peps-c^2\nabla\bbb\nabla\D\peps \right\|_{L^2_t(L^2(\Om))}  + \cAtwo \|\psi_0\|^2_{H^4(\Om)},
			\end{multlined}
	\end{aligned}
	\]
	where we have used the fact that $r$ and $\D^2 \peps$ vanish on the boundary.
	Contrary to the proof of Proposition~\ref{Prop:WellP_Lin_2ndorder_nonlocal}, here we do not have access to an intermediate estimate ({\it cf.} \eqref{ineq:intermediate}), therefore we don't know {\it a priori} if $\D p_t-\D [\nabla \lll \cdot \nabla p]-c^2\D\bbb\D\peps-c^2\nabla\bbb\nabla\D\peps \in L^2(0,T;L^2(\Om))$. Instead, we use the following inequalities to estimate the corresponding $L^2(0,T; L^2(\Om))$-norm:
	\begin{equation}
		\begin{aligned}
			\|\D [\nabla \lll \cdot \nabla p]\|_{L_t^2(L^2(\Om))} \lesssim &\; \|\lll\|_{L_t^2(H^3(\Om))} \|\nabla\Delta p\|_{L_t^\infty(L^2(\Om))},
		\end{aligned}
	\end{equation}
	together with
	\begin{equation}\label{ineq:bootstrap_2}
		\begin{aligned}
	 		\|\D\bbb\D\peps\|_{L_t^2(L^2(\Om))} \lesssim &\; \|\bbb\|_{L^2_t(H^3(\Omega))} \|\nabla\D \peps\|_{L^\infty_t(L^2(\Omega))} \\ 
	 		\lesssim&\; \|\bbb\|_{L^2_t(H^3(\Omega))} (\|\nabla\D p\|_{L^1_t(L^2(\Omega))} + \|\nabla \D \psi_0\|_{H^3(\Om)})\\ 
	 		\lesssim&\; \|\bbb\|_{L^2_t(H^3(\Omega))} (T\|\nabla\D p\|_{L^\infty_t(L^2(\Omega))} + \|\nabla \D \psi_0\|_{H^3(\Om)}),
	 	\end{aligned}
	\end{equation}
	and, since $\Delta \psi \vert_{\partial \Omega}=0$, by elliptic regularity
	\begin{equation}
		\begin{aligned}
			\|\nabla\bbb\nabla\D\peps\|_{L_t^2(L^2(\Om))} \lesssim &\; \|\bbb\|_{L_t^\infty(H^3(\Om))} \|\nabla\D\peps\|_{L_t^2(L^2(\Om))}\\
			\lesssim &\; \|\bbb\|_{L_t^\infty(H^3(\Om))} \|\D^2\peps\|_{L_t^2(L^2(\Om))}.
		\end{aligned}
	\end{equation}
	 We then rely on the smallness of $\|\bbb_t\|_{L^2(L^\infty(\Om))} \leq \Xi^\textbf{B}$ (and eventually of final time $T$), which multiplies $\|\D^2\peps\|_{L^2(L^2(\Om))}$ in \eqref{est_Delta_tildef_Black}, to absorb the arising terms and obtain \eqref{Lin2ndorderNonlocal_Kuzn:Main_energy_est_Black}. This yields the desired estimates.
\end{proof}
Note that in Proposition~\ref{Prop:WellP_Lin_2ndorder_nonlocal_Blackstock},
the need of small $T$ arises from estimate \eqref{ineq:bootstrap_2}. This requirement can be alleviated by establishing additional estimates by testing \eqref{ibvp_2ndorder_lin_Blackstock:Eq} by $\D^2\pepst$. To avoid increased technicality, and since our goal is ultimately to establish the well-posedness of the nonlinear equation~\eqref{ibvp_Blackstock_general}, we do not pursue this refinement. 
\\ \indent
Similarly to the Kuznetsov case, we can establish a uniform lower-order estimate for \eqref{ibvp_2ndorder_lin_Blackstock}.
Under the assumption of
Proposition~\ref{Prop:WellP_Lin_2ndorder_nonlocal_Blackstock}, testing \eqref{ibvp_2ndorder_lin_Blackstock} with $\psi_{t}$ and integrating over space and time yields, after usual manipulations, 
\begin{equation} \label{first_est_selfm_Black}
	\begin{aligned}
	\begin{multlined}
		\frac12 \left\{\|\pepst(t)\|^2_{L^2}+c^2\|\sqrt{\bbb} \nabla \peps(t)\|^2_{L^2} \right\}\Big \vert_0^t
		\leq \, -c^2 \int_0^t(\peps \nabla \bbb, \nabla \pepst)_{L^2}\ds\\+\frac12 c^2\|\bbb_t\|_{L^1(L^\infty(\Om))}\|\nabla \peps\|^2_{L^\infty(0,t;L^2(\Om))}  - \int_0^t( \nabla \lll \cdot \nabla \pepst, \pepst)\ds.  
	\end{multlined}
	\end{aligned}
\end{equation}
To bound the first term on the right, we use integration by parts in time and H\"older's inequality to obtain
\begin{equation}
	\begin{aligned}
		&-c^2 \int_0^t(\peps \nabla \bbb, \nabla \pepst)_{L^2}\ds\\
		\lesssim&\, \begin{multlined}[t]  \|\peps\|_{L^\infty_t(L^4(\Om))}\|\nabla \bbb\|_{L^\infty(L^4(\Om))} \|\nabla \peps\|_{L^\infty_t(L^2(\Om))}
			+ \|\pepst\|_{L^\infty_t(L^2(\Om))}\|\nabla \bbb\|_{L^1(L^\infty(\Om))} \|\nabla \peps\|_{L^\infty_t(L^2(\Om))}\\+ \|\peps\|_{L^\infty_t(L^6(\Om))}\|\nabla \bbb_t\|_{L^1(L^3(\Om))} \|\nabla \peps\|_{L^\infty_t(L^2(\Om))}.
		\end{multlined}
	\end{aligned}
\end{equation}
The last term is treated using estimate \eqref{ineq:lower_order_1}.
Note that, due to the embedding $W^{1,1}(0,T)\hookrightarrow L^\infty(0,T)$, we have
\begin{align}
	&\|\nabla \bbb\|_{L^\infty(L^4(\Om))} \lesssim \|\phi_{t}\|_{L^1(H^2(\Om))} + \|\phi_{tt}\|_{L^2(H^2(\Om))} \lesssim \Xi^\textbf{B}, 
	\shortintertext{and}
	&\|\nabla \bbb\|_{L^1(L^\infty(\Om))} \lesssim \|\phi_t\|_{L^1(H^3(\Om))} \lesssim \Xi^\textbf{B},
	\\
	& \|\nabla \bbb_t\|_{L^1(L^3(\Om))} \lesssim \|\phi_{tt}\|_{L^2(H^2(\Om))} \lesssim \Xi^\textbf{B}.
\end{align}
Thus, utilizing the above estimates and the smallness of $\Xi^\textbf{B}$ (and eventually final time $T$), leads to the uniform bound in $\eps$:
\[ \|\pepst(t)\|^2_{L^2(\Om)} + \| \nabla \peps(t)\|^2_{L^2(\Om)} 
\lesssim \|\psi_1\|^2_{L^2(\Om)} + \| \nabla \psi_0\|^2_{L^2(\Om)}
\]
a.e.\ in time. The next theorems and propositions are extensions of the results of Section~\ref{Sec:Kuznetsov_prop} to the Blackstock equation setting.

\subsection{Well-posedness and limiting behavior of the nonlocal Blackstock equation}\label{Sec:nonlin_wellp_cont_Black} The well-posedness of the nonlinear initial-boundary value problem~\ref{ibvp_Blackstock_general} relies again on setting up a fixed-point mapping
$
\TK^\textbf{B}:\phi \mapsto \peps$,
where $\phi$ will belong to a ball in a suitable Bochner space and $\peps$ solves the associated linearized problem~\eqref{ibvp_2ndorder_lin_Blackstock}. The proof is similar to that of Theorem~\ref{Thm:Wellp_2ndorder_nonlocal_Kuzn} so we omit it here.

\begin{theorem}\label{Thm:Wellp_2ndorder_nonlocal_Blackstock}
	Let $\eps \in (0, \beps)$ and $k$, $\ell \in \R$. Furthermore, let $(\psi_0, \psi_1) \in \Honefour \times \Honefour$ be such that
	\begin{equation}
		\|\psi_0\|^2_{H^4(\Om)}+\|\psi_1\|^2_{H^4(\Om)} \leq r^2,
	\end{equation}
	where $r$ does not depend on $\eps$. 
	Let assumptions \eqref{assumption1}, \eqref{assumption2}, and \eqref{assumption3_Black} on the kernel hold.
	Then, there exist a 
	data size $r_0=r_0(r)>0$, and final time $T=T(r)>0$, both independent of $\eps$, such that if 
	\begin{equation}\label{initsmallWellp_2ndorder_nonlocal_Black}
		\|\psi_0\|^2_{H^1(\Om)}+\|\psi_1\|^2_{L^2(\Om)} \leq r_0^2,
	\end{equation}
	then there is a unique solution $\psi$ in $X_\psi^\textbf{B}$ (defined in \eqref{solution_space_psi_Black}) of  equation \eqref{ibvp_Blackstock_general}, which satisfies the following estimate: %\vanja{We should not call both constants for the K and B equation $C_{\textup{nonlin}}$}
	\begin{equation} \label{Nonlin:Main_energy_est_Black}
		\begin{aligned}
			 \|\peps\|^2_{X_\psi^\textbf{B}}
			\leq\, C_{\textup{nonlin}}^\textbf{B}\left(\,  \|\psi_0\|^2_{H^4(\Om)}+\|\psi_1\|^2_{H^3(\Om)}+ \eta \|\frakKeps\|_{L^1(0,T)} ^2\|\D^2 \psi_1\|^2_{L^2(\Om)}\right).
		\end{aligned}
	\end{equation} 
	where $\eta = 0$ if $\frakKeps \in \{ 0, \eps\delta_0 \}$ and $\eta =1$ otherwise.
	Here, $C_{\textup{nonlin}}^\textbf{B}=C_{\textup{nonlin}}^\textbf{B}(\Omega,\ophi, T)$ 
	does not depend on the parameter $\eps$.
\end{theorem}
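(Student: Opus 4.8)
The plan is to reproduce the Banach fixed-point scheme used for Theorem~\ref{Thm:Wellp_2ndorder_nonlocal_Kuzn}, with the linear Kuznetsov theory replaced by Proposition~\ref{Prop:WellP_Lin_2ndorder_nonlocal_Blackstock}. Concretely, I would introduce the mapping $\TK^\textbf{B}:\phi\mapsto\peps$, where $\peps$ solves the linearized Blackstock problem \eqref{ibvp_2ndorder_lin_Blackstock}, on a closed ball $\calB^\textbf{B}\subset X_\psi^\textbf{B}$ defined (analogously to \eqref{defB_Kuzn}, but with $\|\phi_{tt}\|_{L^1(H^2(\Om))}$ replaced by $\|\phi_{tt}\|_{L^2(H^2(\Om))}$) by the constraints $(\phi,\phi_t)|_{t=0}=(\psi_0,\psi_1)$, the smallness $\|\phi\|_{L^1(H^4(\Om))}+\|\phi_t\|_{L^1(H^3(\Om))}+\|\phi_{tt}\|_{L^2(H^2(\Om))}\le\Xi^\textbf{B}$, the bound $4|\tk|\,\|\phi_t\|_{L^\infty(L^\infty(\Om))}\le1$, and $\|\phi\|_{X_\psi^\textbf{B}}\le R$, with $R$ large and $\Xi^\textbf{B}$ small to be fixed below. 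The set is nonempty (the solution with $\tk=\ell=0$ lies in it for $R$ large), and the constraint $4|\tk|\,\|\phi_t\|_{L^\infty(L^\infty(\Om))}\le1$, via $H^2(\Om)\hookrightarrow L^\infty(\Om)$, forces the nondegeneracy \eqref{non-deg_phi_Black} with $\underline{\bbb}=1/2$, $\overline{\bbb}=3/2$. Since $\phi\in\calB^\textbf{B}\subset X_\phi$ and \eqref{def_xi_Black} holds, Proposition~\ref{Prop:WellP_Lin_2ndorder_nonlocal_Blackstock} applies for final time $T=T(R)$ small, so $\TK^\textbf{B}$ is well defined into $X_\psi^\textbf{B}$.

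For the self-mapping property I would argue in three steps. First, the linear estimate \eqref{Lin2ndorderNonlocal_Kuzn:Main_energy_est_Black} together with $\|\frakKeps\|_{L^1(0,T)}\le\cAone$ gives $\|\peps\|^2_{X_\psi^\textbf{B}}\le C_{\textup{lin}}^\textbf{B}(T)\bigl(\|\psi_0\|^2_{H^4(\Om)}+\|\psi_1\|^2_{\Hthree}+\eta\cAone^2\|\D^2\psi_1\|^2_{\Ltwo}\bigr)$, which is $\le R^2$ once $R=R(r)$ is chosen large. Second, $\|\peps\|_{L^1(H^4(\Om))}+\|\pepst\|_{L^1(H^3(\Om))}+\|\pepstt\|_{L^2(H^2(\Om))}\lesssim(T+\sqrt T)R\le\Xi^\textbf{B}$ for $T$ small. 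Third, Agmon's inequality $\|\pepst\|_{L^\infty(L^\infty(\Om))}\lesssim\|\pepst\|_{L^\infty(\Ltwo)}^{1-n/4}\|\pepst\|_{L^\infty(\Htwo)}^{n/4}$, combined with the uniform lower-order estimate established above for \eqref{ibvp_2ndorder_lin_Blackstock} and with $\|\peps\|_{X_\psi^\textbf{B}}\le R$, yields $\|\pepst\|_{L^\infty(L^\infty(\Om))}\lesssim(\|\psi_0\|^2_{H^1(\Om)}+\|\psi_1\|^2_{L^2(\Om)})^{1/2-n/8}R^{n/4}$, so choosing $r_0$ small in \eqref{initsmallWellp_2ndorder_nonlocal_Black} gives $4|\tk|\,\|\pepst\|_{L^\infty(L^\infty(\Om))}\le1$. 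Hence $\TK^\textbf{B}(\calB^\textbf{B})\subset\calB^\textbf{B}$.

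For contractivity in the energy norm \eqref{energy_norm}, take $\phi^{(1)},\phi^{(2)}\in\calB^\textbf{B}$, put $\ophi=\phi^{(1)}-\phi^{(2)}$ and $\opsi^\eps=\TK^\textbf{B}(\phi^{(1)})-\TK^\textbf{B}(\phi^{(2)})$; then $\opsi^\eps$ has zero data and solves
\[
\opsi^\eps_{tt}-c^2(1-2\tk\phi_t^{(1)})\D\opsi^\eps-\frakKeps*\D\opsi^\eps_{t}+2\ell\,\nabla\phi^{(1)}\cdot\nabla\opsi^\eps_t=-2\tk c^2\,\ophi_t\,\D\psi^{\eps,(2)}-2\ell\,\nabla\ophi\cdot\nabla\psi^{\eps,(2)}_t.
\]
Testing with $\opsi^\eps_t$ over $\Omega\times(0,t)$, the left-hand side is treated exactly as in the lower-order estimate for \eqref{ibvp_2ndorder_lin_Blackstock} (with $\bbb$ replaced by $1-2\tk\phi^{(1)}_t$ and using \eqref{assumption3_Black} to drop the kernel term), while the two source contributions are bounded by $\|\D\psi^{\eps,(2)}\|_{L^1_t(L^\infty(\Om))}\bigl(\|\opsi^\eps_t\|^2_{L^\infty_t(\Ltwo)}+\|\ophi_t\|^2_{L^\infty_t(\Ltwo)}\bigr)$ and $\|\nabla\psi^{\eps,(2)}_t\|_{L^1_t(L^\infty(\Om))}\bigl(\|\opsi^\eps_t\|^2_{L^\infty_t(\Ltwo)}+\|\nabla\ophi\|^2_{L^\infty_t(\Ltwo)}\bigr)$, whose prefactors are controlled since $\psi^{\eps,(2)}\in\calB^\textbf{B}$ — for the first, via $\|\D\psi^{\eps,(2)}\|_{L^1_t(L^\infty(\Om))}\lesssim\|\psi^{\eps,(2)}\|_{L^1(H^4(\Om))}\le\Xi^\textbf{B}$. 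Reducing $T$ (independently of $\eps$) absorbs the $\|\opsi^\eps\|_{\textup{E}}^2$ term, giving $\|\opsi^\eps\|_{\textup{E}}^2\le\tfrac12\|\ophi\|_{\textup{E}}^2$; closedness of $\calB^\textbf{B}$ in $\|\cdot\|_{\textup{E}}$ follows as in \cite[Theorem 4.1]{kaltenbacher2022parabolic}. Banach's theorem then produces a unique fixed point $\peps=\TK^\textbf{B}(\peps)\in\calB^\textbf{B}$ solving \eqref{ibvp_Blackstock_general}, and \eqref{Nonlin:Main_energy_est_Black} is the linear bound at the fixed point.

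The step I expect to be the main obstacle is exactly the one that distinguishes the Blackstock from the Kuznetsov case: the quasilinear coefficient multiplies $\D\peps$ rather than $\pepstt$, so in the difference equation the source term $\ophi_t\,\D\psi^{\eps,(2)}$ has to be absorbed using only energy-norm quantities of $\ophi$ and $\opsi^\eps$. This is what dictates spending the $L^2(0,T;\Honefour)$-regularity of $\psi^{\eps,(2)}$ (equivalently its smallness bound $\|\psi^{\eps,(2)}\|_{L^1(H^4(\Om))}\le\Xi^\textbf{B}$) on that term, and is the reason the ball $\calB^\textbf{B}$ must be designed to encode it; all remaining estimates are routine adaptations of Section~\ref{Sec:Kuznetsov_prop}.
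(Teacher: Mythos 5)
Your proposal is correct and is exactly the argument the paper intends: the paper omits this proof with the remark that it is analogous to Theorem~\ref{Thm:Wellp_2ndorder_nonlocal_Kuzn}, and you carry out precisely that adaptation, using Proposition~\ref{Prop:WellP_Lin_2ndorder_nonlocal_Blackstock} for the linearized problem, a ball with $\|\phi_{tt}\|_{L^2(H^2(\Om))}$ in the $\Xi^\textup{\textbf{B}}$-constraint, Agmon plus the lower-order estimate for the $L^\infty$-smallness of $\pepst$, and a contraction estimate whose new term $\ophi_t\,\D\psi^{\eps,(2)}$ is absorbed via the uniform bound on $\|\D\psi^{\eps,(2)}\|_{L^1(L^\infty(\Om))}$, matching the treatment of the same term in Theorem~\ref{Thm:cont_Black}. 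No gaps.
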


To determine the limiting behavior of solutions to \eqref{ibvp_Blackstock_general} as $\eps \searrow 0$,  we state an analogous result to Theorem~\ref{Thm:Limit} on the continuity of the solution with respect to the memory kernel.
\begin{theorem}\label{Thm:cont_Black}
Let $\eps_1, \eps_2 \in (0, \beps)$. Under the assumptions of Theorem~\ref{Thm:Wellp_2ndorder_nonlocal_Blackstock}, for sufficiently small $T$, the following estimate holds:
\begin{equation}
\|\psi^{\eps_1}-\psi^{\eps_2}\|_{\textup{E}}\lesssim \|(\frakK_{\eps_1}-\frakK_{\eps_2})\Lconv1\|_{L^1(0,T)},
\end{equation}
where $\|\cdot\|_{\textup{E}}$ is defined in~\eqref{energy_norm}.
\end{theorem}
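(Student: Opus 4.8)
The plan is to follow the proof of Theorem~\ref{Thm:Limit}, with the Blackstock nonlinearity taking the place of the Kuznetsov one. Setting $\opsi=\psi^{\eps_1}-\psi^{\eps_2}$ and $\bbb=1-2\tk\psi^{\eps_1}_t$, and expanding the differences exactly as for \eqref{eq:equation_frakeps_diff_Kuzn} but now with the quasilinearity on the elliptic term, one finds that $\opsi$ solves
\begin{align*}
\opsi_{tt}-c^2\bbb\,\D\opsi-\frakK_{\eps_1}*\D\opsi_t
={}& -2c^2\tk\,\opsi_t\D\psi^{\eps_2}-2\ell\bigl(\nabla\psi^{\eps_1}\cdot\nabla\opsi_t+\nabla\opsi\cdot\nabla\psi^{\eps_2}_t\bigr)\\
&{}+(\frakK_{\eps_1}-\frakK_{\eps_2})*\D\psi^{\eps_2}_t
\end{align*}
with zero initial and boundary data. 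Under the assumptions of Theorem~\ref{Thm:Wellp_2ndorder_nonlocal_Blackstock}, both $\psi^{\eps_1}$ and $\psi^{\eps_2}$ belong to $X_\psi^\textbf{B}$ with $\|\psi^{\eps_i}\|_{X_\psi^\textbf{B}}\le R$ uniformly in $\eps$ and satisfy the nondegeneracy bound $0<\underline{\bbb}\le\bbb\le\overline{\bbb}$ built into the fixed-point argument. I would then test this equation with $\opsi_t$ and integrate over $\Om\times(0,t)$, proceeding as in the derivation of the lower-order estimate \eqref{first_est_selfm_Black}, the goal being to bound every term on the right by the product of a factor small in $T$ (uniformly in $\eps$) and the energy quantity $\|\opsi_t\|^2_{L^\infty_t(L^2(\Om))}+\|\nabla\opsi\|^2_{L^\infty_t(L^2(\Om))}$, except for the single kernel-difference term.

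On the left, $(\opsi_{tt},\opsi_t)_{L^2}$ integrates to $\tfrac12\|\opsi_t(t)\|^2_{L^2(\Om)}$; integrating $-c^2(\bbb\,\D\opsi,\opsi_t)_{L^2}$ by parts in space produces $\tfrac{c^2}{2}\ddt\|\sqrt{\bbb}\,\nabla\opsi\|^2_{L^2(\Om)}$ plus commutators featuring $\bbb_t$ and $\nabla\bbb$, handled precisely as in \eqref{first_est_selfm_Black}; and, since $\opsi_t\vert_{\partial\Om}=0$, the memory term contributes $\int_0^t(\frakK_{\eps_1}*\nabla\opsi_t,\nabla\opsi_t)_{L^2}\ds\ge0$ by \eqref{assumption3_Black} applied componentwise with $y=\nabla\opsi_t$, which is simply discarded. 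The gradient term $2\ell(\nabla\psi^{\eps_1}\cdot\nabla\opsi_t,\opsi_t)_{L^2}$ reduces, via integration by parts, to $-\ell(\D\psi^{\eps_1},\opsi_t^2)_{L^2}$, exactly as in \eqref{ineq:lower_order_1}. On the right, $-2c^2\tk(\opsi_t\D\psi^{\eps_2},\opsi_t)_{L^2}$ is controlled by $\|\D\psi^{\eps_2}\|_{L^1(L^\infty(\Om))}\|\opsi_t\|^2_{L^\infty_t(L^2(\Om))}$ and the $\ell$ contribution by $\|\nabla\psi^{\eps_2}_t\|_{L^1(L^\infty(\Om))}\bigl(\|\opsi_t\|^2_{L^\infty_t(L^2(\Om))}+\|\nabla\opsi\|^2_{L^\infty_t(L^2(\Om))}\bigr)$. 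Each of the quantities $\|\bbb_t\|_{L^1(L^\infty(\Om))}$, $\|\nabla\bbb\|_{L^1(L^\infty(\Om))}$, $\|\D\psi^{\eps_1}\|_{L^1(L^\infty(\Om))}$, $\|\D\psi^{\eps_2}\|_{L^1(L^\infty(\Om))}$, $\|\nabla\psi^{\eps_2}_t\|_{L^1(L^\infty(\Om))}$ is $\lesssim(T+\sqrt{T})\,R$, uniformly in $\eps$, thanks to $\psi^{\eps_i}\in X_\psi^\textbf{B}$ together with the embeddings $H^2(\Om)\hookrightarrow L^\infty(\Om)$ and $W^{1,1}(0,T)\hookrightarrow L^\infty(0,T)$, so that each of these contributions carries a prefactor that can be made small by shrinking $T$.

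The remaining term is the kernel difference, treated exactly as in \eqref{psiepsdiff}: since $\D\psi^{\eps_2}_t=\D\psi_1+1*\D\psi^{\eps_2}_{tt}$, one has $(\frakK_{\eps_1}-\frakK_{\eps_2})*\D\psi^{\eps_2}_t=\bigl((\frakK_{\eps_1}-\frakK_{\eps_2})*1\bigr)\D\psi_1+\bigl((\frakK_{\eps_1}-\frakK_{\eps_2})*1\bigr)*\D\psi^{\eps_2}_{tt}$, whence, by Young's convolution inequality,
\begin{align*}
\Bigl|\int_0^t\bigl((\frakK_{\eps_1}-\frakK_{\eps_2})*\D\psi^{\eps_2}_t,\opsi_t\bigr)_{L^2}\ds\Bigr|
\lesssim{}&\|(\frakK_{\eps_1}-\frakK_{\eps_2})*1\|_{L^1(0,T)}\\
&{}\times\bigl(\|\D\psi_1\|_{L^2(\Om)}+\|\D\psi^{\eps_2}_{tt}\|_{L^1_t(L^2(\Om))}\bigr)\|\opsi_t\|_{L^\infty_t(L^2(\Om))},
\end{align*}
where $\|\D\psi^{\eps_2}_{tt}\|_{L^1_t(L^2(\Om))}\lesssim T\,R$ is bounded uniformly in $\eps$ because $\psi^{\eps_2}\in W^{2,\infty}(0,T;\Honetwo)$, and $\|\D\psi_1\|_{L^2(\Om)}$ is controlled by the data. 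A Young's inequality splits this into $\gamma\|\opsi_t\|^2_{L^\infty_t(L^2(\Om))}+C_\gamma\|(\frakK_{\eps_1}-\frakK_{\eps_2})*1\|^2_{L^1(0,T)}$. Collecting all estimates, taking the supremum over $t\in(0,T)$, using $\bbb\ge\underline{\bbb}>0$ and the Poincar\'e--Friedrichs inequality, and choosing $T$ and $\gamma$ small enough (independently of $\eps$) to absorb the energy terms into the left-hand side, one reaches $\|\opsi\|^2_{\textup{E}}\lesssim\|(\frakK_{\eps_1}-\frakK_{\eps_2})*1\|^2_{L^1(0,T)}$, which is the claim after taking square roots.

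The only step that is genuinely new relative to the proof of Theorem~\ref{Thm:Limit} --- and hence the main, if still essentially routine, obstacle --- is the variable-coefficient elliptic term $-c^2(\bbb\,\D\opsi,\opsi_t)_{L^2}$: integrating it by parts generates a commutator of the type $c^2(\nabla\bbb\cdot\nabla\opsi,\opsi_t)_{L^2}$, together with a $\bbb_t$-commutator, both absent from the constant-coefficient Laplacian of the Kuznetsov equation. One has to verify that $\nabla\bbb=-2\tk\nabla\psi^{\eps_1}_t$ and $\bbb_t=-2\tk\psi^{\eps_1}_{tt}$ are bounded in $L^1(0,T;L^\infty(\Om))$ uniformly in $\eps$ with a prefactor vanishing as $T\searrow0$, which follows from $\psi^{\eps_1}\in W^{2,\infty}(0,T;\Honetwo)\cap W^{1,\infty}(0,T;\Honethree)$ and $H^2(\Om)\hookrightarrow L^\infty(\Om)$; and one keeps $\bbb$ inside the energy quantity $\|\sqrt{\bbb}\,\nabla\opsi\|^2_{L^2(\Om)}$, which is harmless because of the uniform nondegeneracy.
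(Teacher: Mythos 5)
Your proposal is correct and follows essentially the same route as the paper: derive the equation for $\opsi=\psi^{\eps_1}-\psi^{\eps_2}$, test with $\opsi_t$, reuse the lower-order estimate machinery of \eqref{first_est_selfm_Black} together with the kernel-difference rewriting of \eqref{psiepsdiff}, bound the Blackstock-specific term via the uniform bound on $\|\D\psi^{\eps_2}\|_{L^1(L^\infty(\Om))}$ from Theorem~\ref{Thm:Wellp_2ndorder_nonlocal_Blackstock}, and absorb by shrinking $T$ and $\gamma$. You merely spell out the variable-coefficient commutators that the paper leaves implicit (and correctly carry the $c^2$ factor in the $\tk$-term), so there is nothing to add.
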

\begin{proof}
The difference $\opsi=\psi^{\eps_1}-\psi^{\eps_2}$ solves 
\begin{equation}
\begin{aligned}\label{eq:equation_frakeps_diff_Black}
&\opsi_{tt}-c^2 (1 - 2 \tk \psi_t^{\eps_1} ) \D \opsi-  \frakK_{\eps_1}* \D\opsi_{t}\\
=&\, \begin{multlined}[t] - 2\tk \opsi_t \D\psi^{\eps_2}- 2\ell \left( \nabla \psi^{\eps_1} \cdot \nabla \opsi_t +\nabla \opsi \cdot \nabla \psi^{\eps_2}_t \right)+(\frakK_{\eps_1}-\frakK_{\eps_2})* \Delta \psi_t^{\eps_2}
\end{multlined}
\end{aligned}
\end{equation}
with zero boundary and initial conditions.
As in the proof of Theorem~\ref{Thm:Limit}, we test \eqref{eq:equation_frakeps_diff_Black} by $\opsi_{t}$ and find ourselves in a similar setting to that of the lower-order estimate \eqref{first_est_selfm_Black}. We can then proceed similarly to the proof of Theorem~\ref{Thm:Limit} to treat the last two right-hand-side terms. The remaining term can be estimated as follows:
 \begin{equation}
 	\begin{aligned}
 		\left |-2\tk \int_0^t \intO \opsi_t^2 \D\psi^{\eps_2} \dxs \right| 
 		\lesssim\,  \|\D\psi ^{\eps_2}\|_{L^1(L^\infty(\Om))}\|\opsi_t\|_{L^\infty_t(L^2(\Om))}^2.
 	\end{aligned}
 \end{equation}
Note that thanks to Theorem~\ref{Thm:Wellp_2ndorder_nonlocal_Blackstock}, we have a uniform bound on \[
\|\D\psi ^{\eps_2}\|_{L^1(L^\infty(\Om))} \lesssim \|\D\psi ^{\eps_2}\|_{L^1(H^2(\Om))}.
\]
 We can thus proceed similarly to the proof of Theorem~\ref{Thm:Limit} to arrive at the desired statement statement.
\end{proof}

With this continuity result for the nonlocal Blackstock equation in hand, we can state the following counterparts of Corollary \ref{Corollary:Limit_epsK} and Proposition~\ref{Prop:Limit_a<=b}. 
\begin{corollary}\label{Cor:inv_limit_Black}
Under the assumptions of Theorems~\ref{Thm:Wellp_2ndorder_nonlocal_Blackstock} and~\ref{Thm:cont_Black}  with the kernel
\[
\frakKeps = \eps \frakK, \quad \eps \in (0, \beps)
\] 
satisfying assumptions \eqref{assumption1}, \eqref{assumption2}, and \eqref{assumption3_Black}, the family of solutions $\{\peps\}_{\eps \in (0, \beps)}$ of \eqref{ibvp_Blackstock_general}  converges in the energy norm to the solution $\psi$ of the initial-boundary value problem for the inviscid Blackstock equation:
\begin{equation}\label{ibvp_Black_limit}
\left \{	\begin{aligned} 
&\psi_{tt}-c^2  (1-2\tk\psi_t)\Delta \psi + 2\ell\, \nabla \psi\cdot\nabla \psi_t  = 0 \quad  &&\text{in } \Omega \times (0,T), \\
&\psi =0 \quad  &&\text{on } \partial \Omega \times (0,T),\\
&(\psi, \psi_t)=(\psi_0, \psi_1), \quad  &&\text{in }  \Omega \times \{0\},
\end{aligned} \right.
\end{equation}
at a linear rate	
\begin{equation}
	\|\peps-\psi\|_{\textup{E}}\lesssim \eps
	\quad \mbox{ as } \ \eps  \searrow 0.
\end{equation}
\end{corollary}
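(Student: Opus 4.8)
The plan is to follow the same route as in the proof of Corollary~\ref{Corollary:Limit_epsK}, exploiting that the continuity estimate of Theorem~\ref{Thm:cont_Black} was deliberately phrased in terms of the weak norm $\|(\frakK_{\eps_1}-\frakK_{\eps_2})\Lconv 1\|_{L^1(0,T)}$, which tolerates $\frakK_0=\delta_0$-type limits and in particular the trivial limit $\frakK_0=0$. First I would observe that for $\frakKeps=\eps\frakK$ the natural candidate limit is the \emph{inviscid} Blackstock equation \eqref{ibvp_Black_limit}, i.e.\ the one obtained by setting $\frakK_0\equiv 0$. Since the zero kernel trivially satisfies \eqref{assumption1} (with $\cAone=0$), \eqref{assumption2} (with $\cAtwo=0$), and \eqref{assumption3_Black} (the integrand vanishes), Theorem~\ref{Thm:Wellp_2ndorder_nonlocal_Blackstock} applies with $\frakK_0$ as memory kernel and produces a unique solution $\psi$ of \eqref{ibvp_Black_limit} in $X_\psi^\textbf{B}$, under the same smallness/shortness conditions on the data and final time that are already assumed; here one uses that the constants $r_0$ and $T$ furnished by that theorem can be chosen uniformly over the whole family $\{\frakKeps\}_{\eps\in(0,\beps)}\cup\{\frakK_0\}$ because the kernel norms $\cAone=\eps\check c_1$, $\cAtwo=\tfrac1{\beps}\check c_2$ appearing in the estimates are uniformly bounded on $(0,\beps)$ (as recorded in the discussion preceding Table~\ref{tab:ker_assu_veri}).

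Next I would simply invoke Theorem~\ref{Thm:cont_Black} with $\frakK_{\eps_1}=\frakKeps=\eps\frakK$ and $\frakK_{\eps_2}=\frakK_0=0$, which gives
\begin{equation*}
\|\peps-\psi\|_{\textup{E}}\lesssim \|\frakKeps\Lconv 1\|_{L^1(0,T)}=\eps\,\|\frakK\Lconv 1\|_{L^1(0,T)}.
\end{equation*}
Since $\frakK\in L^1(0,T)$ (it is either the scaled Abel kernel $\rt^{-\alpha}g_\alpha$ with $\alpha\in(0,1)$ or a Mittag-Leffler kernel with fixed $\tau$, both integrable on $(0,T)$), the quantity $\|\frakK\Lconv 1\|_{L^1(0,T)}=\int_0^T\!\int_0^t\frakK(s)\,\textup{d}s\,\textup{d}t$ is a finite constant independent of $\eps$, and the claimed linear rate $\|\peps-\psi\|_{\textup{E}}\lesssim\eps$ as $\eps\searrow 0$ follows. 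Strictly speaking Theorem~\ref{Thm:cont_Black} is stated for two parameters $\eps_1,\eps_2\in(0,\beps)$, so I would add a one-line remark that its proof only uses the well-posedness bounds of Theorem~\ref{Thm:Wellp_2ndorder_nonlocal_Blackstock} for the two solutions being compared, and these are available for $\psi$ as well once $\frakK_0$ is verified to satisfy the three assumptions — exactly as was done tacitly in Corollary~\ref{Corollary:Limit_epsK}.

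I do not expect any genuine obstacle here: the corollary is a direct specialization of Theorem~\ref{Thm:cont_Black}, and all the analytic work has already been carried out in the linear and nonlinear well-posedness results and in the continuity theorem. The only point that requires a modicum of care — and which I would make explicit — is the uniformity of the data-size and final-time thresholds $r_0$, $T$ with respect to $\eps$, including the limit $\eps=0$; this is guaranteed because every constant entering the estimates of Proposition~\ref{Prop:WellP_Lin_2ndorder_nonlocal_Blackstock} and Theorem~\ref{Thm:Wellp_2ndorder_nonlocal_Blackstock} depends on the kernel only through $\cAone$, $\cAtwo$, and (via \eqref{assumption3_Black}) no positive coercivity constant at all, all of which are bounded uniformly on the closure $[0,\beps)$ of the parameter interval.
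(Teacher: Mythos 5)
Your proposal is correct and follows essentially the same route as the paper: the paper proves this corollary by the argument of Corollary~\ref{Corollary:Limit_epsK}, namely noting that the limiting kernel $\frakK_0=0$ satisfies the relevant assumptions, applying the continuity result (here Theorem~\ref{Thm:cont_Black}) with $\frakK_{\eps_1}=\eps\frakK$ and $\frakK_{\eps_2}=0$, and using $\|\frakKeps\Lconv 1\|_{L^1(0,T)}=\eps\|\frakK\Lconv 1\|_{L^1(0,T)}\lesssim\eps$. Your extra remarks on the $\eps$-uniformity of $r_0$, $T$ and on extending the two-parameter statement to the zero kernel are sound but only make explicit what the paper leaves tacit.
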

\begin{proof}
	Follows analogously to Corollary~\ref{Corollary:Limit_epsK}. The details are omitted.
\end{proof}
The thermal relaxation time limit result for specific families of Mittag-Leffler kernels is given by the following result.
\begin{proposition}
Let $\rt>0$ and $\delta>0$ be fixed, and let the assumptions of Theorems~\ref{Thm:Wellp_2ndorder_nonlocal_Blackstock} and~\ref{Thm:cont_Black}  hold. Consider the family of solutions $\{\peps\}_{\eps \in(0,\beps)}$ of \eqref{ibvp_Kuzn_general} with the kernel given by
\begin{equation} \label{kernel_form}
\begin{aligned}
\frakKeps(t)=&\,\delta\left(\frac{\rt}{\eps}\right)^{a-b}\frac{1}{\eps^b}t^{b-1}E_{a,b}\left(-\left(\frac{t}{\eps}\right)^a\right) \quad \text{where } \, 0 < a \leq b\leq 1.
\end{aligned}
\end{equation}
Then, the family $\{\peps\}_{\eps \in(0,\beps)}$
converges to the solution $\psi$ of 
\begin{equation}\label{ibvp_West_fractional_limit}
\left \{	\begin{aligned} 
&\psi_{tt}-c^2 (1-2\tk \psi_t)\Delta \psi -  \D\frakK_0\Lconv\psi_t + 2\ell \,\nabla\psi\cdot\nabla\psi_t = 0\quad  &&\text{in } \Omega \times (0,T), \\
&\psi =0 \quad  &&\text{on } \partial \Omega \times (0,T),\\
&(\psi, \psi_t)=(\psi_0, \psi_1), \quad  &&\text{in }  \Omega \times \{0\},
\end{aligned} \right.
\end{equation}
with the kernel $\frakK_0= \delta\rt^{a-b} g_{b-a}$ at the following rate
\begin{equation}
\|\peps-\psi\|_{\textup{E}}\lesssim \|(\frakKeps-\frakK_0)\Lconv1\|_{L^1(0,T)}  \sim\eps^a
\quad \mbox{ as } \ \eps  \searrow 0.
\end{equation}
\end{proposition}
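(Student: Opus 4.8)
The plan is to argue exactly as in the proof of Proposition~\ref{Prop:Limit_a<=b}, replacing the Kuznetsov continuity estimate of Theorem~\ref{Thm:Limit} by its Blackstock analogue, Theorem~\ref{Thm:cont_Black}. First I would check that the limiting kernel $\frakK_0 = \delta \rt^{a-b} g_{b-a}$ satisfies the hypotheses of Theorem~\ref{Thm:Wellp_2ndorder_nonlocal_Blackstock}, namely \eqref{assumption1}, \eqref{assumption2}, and \eqref{assumption3_Black}. When $a<b$ it is the Abel kernel $\delta\rt^{a-b} g_{b-a}$ with differentiation order $b-a\in(0,1)$, and when $a=b$ it degenerates to $\delta\delta_0$; in either case it belongs to $L^1(0,T)\cup\{\eps\delta_0\}$ with a total-variation norm independent of $\eps$, it is completely monotone on $(0,\infty)$ (hence \eqref{assumption2} holds via \cite[Lemma 5.1]{kaltenbacher2023limting}), and it satisfies the weak positivity condition \eqref{assumption3_Black}, which holds for all the kernels considered in this work. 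Thus Theorem~\ref{Thm:cont_Black}, applied to the family $\{\frakKeps\}$ together with $\frakK_0$, yields
\[
\|\peps - \psi\|_{\textup{E}} \le C\,\|(\frakKeps - \frakK_0)\Lconv 1\|_{L^1(0,T)},
\]
with $C>0$ independent of $\eps$, where $\psi$ is the unique solution of \eqref{ibvp_West_fractional_limit} provided by Theorem~\ref{Thm:Wellp_2ndorder_nonlocal_Blackstock} with kernel $\frakK_0$.

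Next I would quantify the right-hand side as $\eps\searrow 0$; this step is identical to the corresponding one for the Kuznetsov equation. A direct computation with the Mittag-Leffler kernel, recorded in \cite[Proposition 4.1]{kaltenbacher2023limting}, gives
\[
\|(\frakKeps - \frakK_0)\Lconv 1\|_{L^1(0,T)} = \delta\,T^{1+b-a}E_{a,2+b-a}\!\left(-\left(\frac{T}{\eps}\right)^{a}\right),
\]
and, since $2+b-a\geq a>0$, the asymptotic expansion \eqref{asymptotics} of the Mittag-Leffler function yields
\[
\|(\frakKeps - \frakK_0)\Lconv 1\|_{L^1(0,T)} \sim \frac{\delta\,T^{1+b-a}}{\Gamma(2+b-2a)}\left(\frac{T}{\eps}\right)^{-a}\quad\text{as } \frac{T}{\eps}\to\infty,
\]
that is, $\sim \eps^a$. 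Combining the two displays gives the claimed rate $\|\peps-\psi\|_{\textup{E}}\lesssim\eps^a$.

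I do not anticipate a genuine obstacle here: the analytic heavy lifting is already contained in Theorem~\ref{Thm:cont_Black}, and the rate is a direct transcription of the Mittag-Leffler asymptotics. The two points needing care are the restriction $a\leq b$, which is precisely what guarantees that \eqref{assumption2} holds uniformly in $\eps$ for the family $\{\frakKeps\}$ (the case $a>b$ being excluded for this reason, \emph{cf.}\ Table~\ref{tab:ker_assu_veri}), and the identification of the limit $\psi$ as the solution of the fractional Blackstock equation \eqref{ibvp_West_fractional_limit} with kernel $\frakK_0$, which follows from well-posedness of that equation and the uniqueness asserted in Theorem~\ref{Thm:Wellp_2ndorder_nonlocal_Blackstock}, just as in the Kuznetsov setting. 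When $a=b$ one recovers, as expected, the strongly damped Blackstock equation as the limit.
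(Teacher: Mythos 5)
Your proposal is correct and follows exactly the route the paper takes: the paper's proof simply states that the result follows analogously to Proposition~\ref{Prop:Limit_a<=b}, i.e.\ apply the Blackstock continuity estimate of Theorem~\ref{Thm:cont_Black} with the limiting kernel $\frakK_0=\delta\rt^{a-b}g_{b-a}$ (which satisfies \eqref{assumption1}, \eqref{assumption2}, \eqref{assumption3_Black}) and then invoke the Mittag-Leffler asymptotics of $\|(\frakKeps-\frakK_0)\Lconv 1\|_{L^1(0,T)}$ from \cite[Proposition 4.1]{kaltenbacher2023limting} to obtain the rate $\eps^a$. Your added verification of the kernel assumptions and the identification of the limit via uniqueness are exactly the omitted "details".
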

\begin{proof}
	Follows analogously to Proposition~\ref{Prop:Limit_a<=b}. The details are omitted.
\end{proof}
\subsection{Comparison with the Westervelt equation}
In the context of nonlinear acoustics, the potential form Westervelt equation is obtained by setting $\ell=0$ and adjusting the value of the nonlinearity parameter $\tk$ in \eqref{abstract_wave_eq_Kuznetsov} (we denote the modified parameter by $\accentset{\approx}{k}$):
\begin{equation} \label{West_general_potential}
	\left \{	\begin{aligned} 
	&(1+2\accentset{\approx}{k} \pepst)\pepstt-c^2 \Delta \peps - \Delta \frakKeps * \peps = 0\quad  &&\text{in } \Omega \times (0,T), \\
	&\peps =0 \quad  &&\text{on } \partial \Omega \times (0,T),\\
	&(\peps, \pepst)=(\psi_0, \psi_1), \quad  &&\text{in }  \Omega \times \{0\}.
	\end{aligned}\right.
\end{equation} 

In this setting, the results of Section~\ref{Sec:Kuznetsov_prop} remain valid, but we can also obtain a bit more. Indeed, when $\ell=0$, we do not require \eqref{assumption2} as the bootstrap argument on page~\pageref{paragraph:bootstrap} is not needed. Then, under similar restrictions on final time, regularity and size of data to those of Theorem~\ref{Thm:Wellp_2ndorder_nonlocal_Kuzn}, we expect \eqref{West_general_potential} to be well-posed in $$W^{3,1}(0,T;\Honezero) \cap W^{2,1}(0,T;\Honetwo) \cap W^{1,\infty}(0,T;\Honethree).$$

It is possible to show, under the following additional assumption on $\rt$:
\begin{equation}\label{assumptiion_rt/eps_const}
	\rt = \rt(\eps) \quad \textrm{and } \left(\frac{\rt}{\eps}\right)^{a-b} = \rho^{a-b} = \textrm{constant},
\end{equation}
that the family of solutions $\{\peps\}_{\eps \in(0,\beps)}$ of  \eqref{ibvp_Kuzn_general} with the kernel given by
\begin{equation}
	\begin{aligned}
		\frakKeps(t)=&\,\delta\left(\frac{\rt}{\eps}\right)^{a-b}\frac{1}{\eps^b}t^{b-1}E_{a,b}\left(-\left(\frac{t}{\eps}\right)^a\right), \qquad 0<b<a \leq1,
	\end{aligned}
\end{equation}
converges in the energy norm ($\|\cdot\|_{\textup{E}}$) to the solution $\psi$ of the inviscid problem:
\begin{equation} 
	\left \{	\begin{aligned} 
		&(1+2\accentset{\approx}{k} \psi_t)\psi_{tt}-c^2 \Delta \psi = 0\quad  &&\text{in } \Omega \times (0,T), \\
		&\psi =0 \quad  &&\text{on } \partial \Omega \times (0,T),\\
		&(\psi, \psi_t)=(\psi_0, \psi_1), \quad  &&\text{in }  \Omega \times \{0\}
	\end{aligned}\right.
\end{equation} 
at the following rate:
\begin{equation} \label{linrate_Kuzn_tau2}
	\|\peps-\psi\|_{\textup{E}}\lesssim \|\frakKeps*1\|_{L^1(0,T)} \sim \eps ^{a-b} \quad \mbox{ as } \ \eps\searrow0.
\end{equation} 
This result is comparable to the one established in~\cite[Proposition 4.2]{kaltenbacher2023limting} where the pressure form of the Westervelt equation (see~\eqref{West_general}) was analyzed. We note, however, that the regularity requirements on initial data are higher here since the quasilinear coefficient contains a time derivative ($1+2\tk \pepst$ instead of $1+2\tilde{k}\ueps$), thus requiring higher-order energy arguments to control it.

\section*{Discussion}
In this work, we have investigated the nonlinear Kuznetsov and Blackstock equations with a general nonlocal dissipation term which encompasses the case of fractional-in-time damping.
We rigorously studied their $\eps$-uniform local well-posedness and their limiting behavior with respect to the parameter $\eps$. As we have seen, the limiting behavior and rate of convergence is influenced by the dependence of the memory kernel $\frakKeps$ on $\eps$. In particular, we have established limiting results for equations involving a class of Abel and Mittag-Leffler kernels. The difference between the Kuznetsov and Blackstock equations stems from the position of the nonlinearity which influences the manipulation of the equations and thus which terms need to be controlled. In particular, because the term $\pepstt$ appears only linearly in the Blackstock equation, its solution spaces need not be as regular in time as Kuznetsov's and the requirements on the kernel are weaker as well. However, the convergence rate to their respective limiting behavior is qualitatively similar for both equations.  \\
\indent The framework developed in this work and leading to Theorem~\ref{Thm:Limit} and Theorem~\ref{Thm:cont_Black} allows extending the limiting study to other parameters of interest, as long as the parameter-dependent kernels satisfy assumptions \eqref{assumption1}, \eqref{assumption2}, and \eqref{assumption3} (when considering Kuznetsov's nonlinearities) or the weaker \eqref{assumption3_Black} (when interested in Blackstock's). Among others, the limit as the fractional order $\alpha\nearrow1$ in the Abel kernels can be studied. 

%%%%%%%%%%%%%%%%%
\section*{Acknowledgments}
The work of the first author was supported by the Austrian Science Fund FWF under the grant DOC 78.
%%%%%%%%%%%%%%%%%
%\input{appendix}
\bibliography{references}{}

\begin{thebibliography}{10}

\bibitem{agmon2010lectures}
{\sc S.~Agmon}, {\em Lectures on elliptic boundary value problems}, vol.~369,
  American Mathematical Soc., 2010.

\bibitem{baker2022numerical}
{\sc K.~Baker, L.~Banjai, and M.~Ptashnyk}, {\em Numerical analysis of a
  time-stepping method for the {W}estervelt equation with time-fractional
  damping}, arXiv preprint arXiv:2210.16349,  (2022).

\bibitem{blackstock1963approximate}
{\sc D.~T. Blackstock}, {\em Approximate equations governing finite-amplitude
  sound in thermoviscous fluids}, tech. rep., General Dynamics/Electronics
  Rochester NY, 1963.

\bibitem{compte1997generalized}
{\sc A.~Compte and R.~Metzler}, {\em The generalized {C}attaneo equation for
  the description of anomalous transport processes}, Journal of Physics A:
  Mathematical and General, 30 (1997), p.~7277.

\bibitem{dekkers2017cauchy}
{\sc A.~Dekkers and A.~Rozanova-Pierrat}, {\em Cauchy problem for the
  {K}uznetsov equation}, Discrete \& Continuous Dynamical Systems - {A}, 39
  (2019), pp.~277--307.

\bibitem{fritz2018well}
{\sc M.~Fritz, V.~Nikoli{\'c}, and B.~Wohlmuth}, {\em Well-posedness and
  numerical treatment of the {B}lackstock equation in nonlinear acoustics},
  Mathematical Models and Methods in Applied Sciences, 28 (2018),
  pp.~2557--2597.

\bibitem{grisvard2011elliptic}
{\sc P.~Grisvard}, {\em Elliptic problems in nonsmooth domains}, SIAM, 2011.

\bibitem{gurtin1968general}
{\sc M.~E. Gurtin and A.~C. Pipkin}, {\em A general theory of heat conduction
  with finite wave speeds}, Archive for Rational Mechanics and Analysis, 31
  (1968), pp.~113--126.

\bibitem{holm2019waves}
{\sc S.~Holm}, {\em Waves with Power-Law Attenuation}, Springer, 2019.

\bibitem{jin2021fractional}
{\sc B.~Jin}, {\em Fractional differential equations}, Springer, 2021.

\bibitem{jordan2014second}
{\sc P.~M. Jordan}, {\em Second-sound phenomena in inviscid, thermally relaxing
  gases}, Discrete \& Continuous Dynamical Systems-B, 19 (2014), p.~2189.

\bibitem{kaltenbacher2023limting}
{\sc B.~Kaltenbacher, M.~Meliani, and V.~Nikoli\'c}, {\em Limiting behavior of
  quasilinear wave equations with fractional-type dissipation}, arXiv preprint
  arXiv:2206.15245,  (2023).

\bibitem{kaltenbacher2022parabolic}
{\sc B.~Kaltenbacher and V.~Nikoli\'c}, {\em Parabolic approximation of
  quasilinear wave equations with applications in nonlinear acoustics}, SIAM
  Journal on Mathematical Analysis, 54 (2022), pp.~1593--1622.

\bibitem{kaltenbacher2022time}
{\sc B.~Kaltenbacher and V.~Nikoli{\'c}}, {\em Time-fractional
  {M}oore--{G}ibson--{T}hompson equations}, Mathematical Models and Methods in
  Applied Sciences, 32 (2022), pp.~965--1013.

\bibitem{kaltenbacher2022inverse}
{\sc B.~Kaltenbacher and W.~Rundell}, {\em On an inverse problem of nonlinear
  imaging with fractional damping}, Mathematics of Computation, 91 (2022),
  pp.~245--276.

\bibitem{kubica2020time}
{\sc A.~Kubica, K.~Ryszewska, and M.~Yamamoto}, {\em Time-fractional
  Differential Equations: A Theoretical Introduction}, Springer, 2020.

\bibitem{kuznetsov1971equations}
{\sc V.~P. Kuznetsov}, {\em Equations of nonlinear acoustics}, Soviet Physics:
  Acoustics, 16 (1970), pp.~467--470.

\bibitem{Wilke}
{\sc S.~Meyer and M.~Wilke}, {\em Global well-posedness and exponential
  stability for {K}uznetsov's equation in ${L}_p$-spaces}, Evolution Equations
  \& Control Theory, 2 (2013), pp.~365--378.

\bibitem{mizohata1993global}
{\sc K.~Mizohata and S.~Ukai}, {\em The global existence of small amplitude
  solutions to the nonlinear acoustic wave equation}, Journal of Mathematics of
  Kyoto University, 33 (1993), pp.~505--522.

\bibitem{nikolic2022time}
{\sc V.~Nikoli{\'c} and B.~Said-Houari}, {\em Time-weighted estimates for the
  {B}lackstock equation in nonlinear ultrasonics}, Journal of Evolution
  Equations, 23 (2023), p.~59.

\bibitem{podlubny1998fractional}
{\sc I.~Podlubny}, {\em Fractional differential equations: an introduction to
  fractional derivatives, fractional differential equations, to methods of
  their solution and some of their applications}, Elsevier, 1998.

\bibitem{povstenko2011fractional}
{\sc Y.~Povstenko}, {\em Fractional {C}attaneo-type equations and generalized
  thermoelasticity}, Journal of Thermal Stresses, 34 (2011), pp.~97--114.

\bibitem{tani2017mathematical}
{\sc A.~Tani}, {\em Mathematical analysis in nonlinear acoustics}, in AIP
  Conference Proceedings, vol.~1907, AIP Publishing LLC, 2017, p.~020003.

\bibitem{zhang2014time}
{\sc W.~Zhang, X.~Cai, and S.~Holm}, {\em Time-fractional heat equations and
  negative absolute temperatures}, Computers \& Mathematics with Applications,
  67 (2014), pp.~164--171.

\end{thebibliography}
\bibliographystyle{siam} 
\end{document}